\documentclass[final]{paper}
\usepackage{amsfonts,latexsym,url,amsmath,amssymb,comment,enumitem}

%\includecomment{commentforus}
\excludecomment{commentforus}

\usepackage{url,centernot}
\usepackage{graphicx,color}
\usepackage[color,notref,notcite]{showkeys}
\usepackage{bm}

\setlength{\textwidth}{170mm}
\setlength{\topmargin}{-20mm}
\setlength{\oddsidemargin}{0mm}
\setlength{\textheight}{240mm}
\parindent=0pt

\newcommand{\mathbi}[1]{{\boldsymbol #1}}
\def\N{\mathbf{N}}

\def\R{\mathbf{R}}

\def\reg{\mathop{\rm reg}}
\def\regLLE{\reg\nolimits_{\textsc{LLE}}}
\def\regbc{\reg\nolimits_{\textsc{Ba}}}
\def\dist{{\rm dist}}

\def\tpen{R}
\def\iso{{\mathcal L}}
\def\emb{{\bm{\Phi}}}

\def\n{\mathbf{n}}
\def\nv{\mathbf{N}}
\def\ml{{\mbox{\tiny\textsc{ML}}}}

\def\P{\mathbb{P}}
\def\<{\langle}
\def\>{\rangle}

\def\partition{\mathcal U}
\def\gr{{\mathcal G}}

\def\dsp{\displaystyle} 
\def\div{{\rm div}}

% Ci-dessous les definitions d'espaces d'inconnues hybridees;
% je les rassemble ici pour qu'on puisse aisement les changer
% si elles ne nous plaisent pas.

\def\incr{Q}

\def\CSTcontrol{C_{\rm ctrl}}

\newcounter{cst}
\def \ctel#1{C_{\refstepcounter{cst}\label{#1}\thecst}}
\def \cter#1{C_{\ref{#1}}}

\def\bt{\begin{theorem}}
\def\et{\end{theorem}}
\def\bp{\begin{proposition}}
\def\ep{\end{proposition}}
\def\bl{\begin{lemma}}
\def\el{\end{lemma}}
\def\bc{\begin{corollary}}
\def\ec{\end{corollary}}
\def\bd{\begin{definition}}
\def\ed{\end{definition}}
\def\br{\begin{remark}}
\def\er{\end{remark}}

\def\cv{K}
\newcommand{\polyd}{{\mathcal T}}

\newcommand{\mesh}{{\mathcal M}}
\def\sizemesh{{h_\mesh}}
\newcommand{\edge}{{\sigma}}
\newcommand{\medge}{{|\sigma|}}
\newcommand{\edges}{{\mathcal E}}              % ensemble des aretes
\newcommand{\edgescv}{{{\edges}_\cv}}  
  
\newcommand{\edgesext}{{{\edges}_{\rm ext}}} % ensemble des aretes ext
\newcommand{\edgesint}{{{\edges}_{\rm int}}} % ensemble des aretes int

                         % centre des mediatrices   
 % ieme coordonnee du centre de gravite de l'interface
\newcommand{\centers}{\mathcal{P}}
\newcommand{\x}{\mathbi{x}}

\newcommand{\y}{\mathbi{y}}

\newcommand{\xcv}{{\mathbi{x}}_\cv}

\newcommand{\centeredge}{\overline{\mathbi{x}}_\edge} %centre de gravite de l'interface

\newcommand{\vertex}{{\mathsf{v}}}
\newcommand{\vertices}{{\mathcal V}}
\newcommand{\verticescv}{\vertices_K}

\newcommand{\bu}{\overline{u}}
\newcommand{\bvarphi}{\mathbi{\varphi}}

\newcommand{\be}{\begin{equation}}
\newcommand{\ee}{\end{equation}}
\newcommand{\disc}{{\mathcal D}}
\newcommand{\Ii}{{I_{\O}}}
\newcommand{\Ib}{{I_{\partial\O}}}

\renewcommand{\O}{{\Omega}}
\def\dualcell{{\mathcal K}}
\def\dr{\partial}
\def\bfn{\mathbf{n}}

\def\ncvedge{\n_{\cv,\edge}}

\renewcommand{\d}{{\rm d}}

\newcommand{\ba}{\begin{array}{llll}   }
\newcommand{\bac}{\begin{array}{c}}
\newcommand{\bari}{\begin{array}{r}}
\newcommand{\ea}{\end{array}}

\def\bcdisc{{\disc^{\mbox{\tiny\textsc{Ba}}}}}
\def\bcdiscm{{\disc_m^{\mbox{\tiny\textsc{Ba}}}}}
\def\obcdisc{{\overline{\disc}^{\mbox{\tiny\textsc{Ba}}}}}
\def\bcI{I^{\mbox{\tiny\textsc{Ba}}}}
\def\bcIU{I_U^{\mbox{\tiny\textsc{Ba}}}}

\newcommand{\RTk}{\mathbb{RT}_k}

\newcommand{\bv}{\boldsymbol v}
\newcommand{\bfw}{\boldsymbol w}
\newcommand{\bfV}{\boldsymbol{V}}
\newcommand{\bw}{\boldsymbol w}

\def\grad{\nabla}
\def\mcv{|\cv|}
\def\dcvedge{d_{\cv,\edge}}

\newtheorem{theorem}{Theorem}[section]
\newtheorem{remark}[theorem]{Remark}

\newtheorem{lemma}[theorem]{Lemma} 
\newtheorem{definition}[theorem]{Definition}
\newtheorem{proposition}[theorem]{Proposition}
\newtheorem{corollary}[theorem]{Corollary}

%\renewtheorem{example}[theorem]{Example}
\numberwithin{equation}{section}

%%%%%%%%%%%%% Pour les corrections entre versions
\usepackage[normalem]{ulem}
\normalem
\definecolor{violet}{rgb}{0.580,0.,0.827}

%%%%%%%%%%%%%

%%%%%%%%%%%%% Counter of expressions

\newcounter{cexp}
\catcode`\@=11
\def\terml#1{T_{\refstepcounter{cexp}\@bsphack
\protected@write\@auxout{}%
           {\string\newlabel{#1}{{\thecexp}{\thepage}}}\thecexp}}
\catcode`\@=12

%%%%%%%%%%%%% 

\newenvironment{proof}{\noindent {\bf Proof} }{$\square$ }

\newenvironment{acknowledgement}{\noindent {\bf Acknowledgement} }{}

\begin{document}

\title{Gradient schemes: generic tools \\ for the numerical analysis of diffusion equations}
 
\author{
J\'erome Droniou$^1$, Robert Eymard$^2$ and Rapha\`ele Herbin$^3$\\
\small{$^1$School of Mathematical Sciences, Monash University}
\\
\small{$^2$Laboratoire d'Analyse et de Math\'ematiques Appliqu\'ees,   Universit\'e Paris-Est,
UMR 8050}
\\
\small{$^3$Laboratoire d'Analyse Topologie et Probabilit\'es, 
UMR 6632,  Universit\'e d'Aix-Marseille}
}

\date{\today}

\maketitle

\begin{abstract}
The gradient scheme framework is based on a small number of properties and encompasses a large number of numerical methods for diffusion models. 
We recall these properties and  develop some new generic tools associated with the gradient scheme framework. These tools enable us to prove that classical schemes are indeed gradient schemes, and allow us to perform a complete and generic study of the well-known (but rarely
well-studied) mass lumping process.
They also allow an easy check of the mathematical properties of new schemes, by
developing a generic process for eliminating unknowns via barycentric condensation, and by designing a concept of discrete functional analysis toolbox for schemes based on polytopal meshes.
\end{abstract}

\textbf{AMS subject classification}: 65M08, 65M12, 65M60, 65N08, 65N12, 65N15, 65N30

\keywords{gradient scheme, gradient discretisation, numerical scheme, diffusion equations, convergence analysis, discrete functional analysis}

\date{\today}

\maketitle

\section{Introduction}\label{sec-intro}

A wide variety of schemes have been developed in the last few years for the numerical simulation of anisotropic diffusion equations  on general meshes, see \cite{review,  mfdrev, 3Dbench} and references therein.
The rigorous analysis of these methods is crucial to ensure their robustness and convergence, and to avoid the pitfalls of methods seemingly well-defined but not converging to the proper model  \cite[Chapter III, \S 3.2]{phdfaille}.
The necessity to conduct this analysis for each method and each model has given rise to a number of general ideas which are re-used from one study to the other; a set of rather informal techniques has thus emerged over the years.
 
It is tempting to push further this idea of ``set of informal similar techniques'', to try and make it a formal mathematical theory. 
This boils down to finding common factors in the studies for all pairs (\emph{method},\emph{model}), and to extract the core properties that ensure the stability and convergence of numerical methods for a variety of models. 
Identifying these core properties greatly reduces the work, which then amounts to two tasks:
\begin{itemize}[leftmargin=6em]
\item[Task (1):] establish that a given numerical method satisfies the said properties;
\item[Task (2):] prove that these properties ensure the convergence of a method for all considered models.
\end{itemize}
Thus, the number of convergence studies is reduced from 
[Card(\emph{methods})$\times$ Card(\emph{models})] -- which corresponds to one per pair (\emph{method},\emph{model}) -- to [Card(\emph{methods}) +  Card(\emph{models})].
Card(\emph{methods}) studies are needed to prove
that each method satisfies the core properties, and Card(\emph{models}) studies are required
to prove
that an abstract method that satisfies the core properties is convergent for each model.

% (see Figure \ref{fig:combi2}).
%\begin{center}
%\begin{figure}
%\input{fig-combi2.pdf_t}
%\caption{\label{fig:combi2}Numerical methods and models. Each line represents
%one analysis: to prove that a given method fits into the unification framework
%(lines on the left), and to analyse the framework when applied to a given
%model (lines on the right).}
%\end{figure}
%\end{center}

Attempts at designing rigorous theories of unified convergence analysis for families of numerical methods are not new, see e.g. \cite{ciarlet1978finite,  dipietro2012mathematical, arn2010fin, EB14} for finite element, discontinuous Galerkin methods and compatible discretisation operators.
Recently, the gradient scheme framework was developed \cite{eym-12-sma,dro-12-gra}.
Not only does this framework provide a unifying framework for a number of methods (Task 1) -- conforming and non-conforming finite elements, finite volumes, mimetic finite differences, \ldots --   but it also enables complete convergence analyses for a wide variety of models of 2nd order diffusion PDEs (Task 2) -- linear, non-linear, non-local, degenerate, etc.  \cite{dro-12-gra, zamm2013,eym-12-stef,dro-14-sto,dro-14-deg, aln-14-vi,koala,BGGLM15,eymard2015appl, cances:hal-01119735} -- through the verification of a very small number of  properties (3 for linear models, 4 or 5 for non-linear models).

The purpose of this article is to bring gradient schemes one
step further towards a unification theory. 
Indeed, we develop a set of generic tools
that make Task (1) extremely simple for a great variety of
methods. In other words, using these tools we can produce short but complete
proofs that several numerical methods for 2nd order diffusion problems are
gradient schemes.

The paper is organised as follows. In Section \ref{sec:prop}, we present the gradient scheme framework. 
This framework is based on the notion of gradient discretisation, which defines discrete spaces and operators,  and on
five core properties, presented in Subsection \ref{sec:deftools}: coercivity, consistency, limit-conformity, compactness, and piecewise constant reconstruction. 
A gradient scheme is a gradient discretisation applied to a given diffusion model, consisting in a set of second order partial differential equations and boundary conditions.
Depending on the considered model, a gradient discretisation must satisfy
three, four or five of these core properties to give rise to a convergent gradient scheme.
In sections following \ref{sec:deftools}, we develop generic notions that are useful
to establish that particular methods fit into the framework.
More precisely, in Subsection \ref{sec:LL} we introduce the concept of local
linearly exact gradients, and we show that it implies one of the core properties -- the consistency of gradient discretisations. Subsection \ref{sec:bc} deals with the barycentric condensation of gradient discretisations,
which is a classical way to eliminate degrees of freedom. 
The gradient scheme framework enables us, in Subsection \ref{sec:masslump}, to rigorously
define the well-known technique of mass lumping, and to show that this technique does not affect 
the convergence of a given scheme.
In Subsection \ref{sec:dfa} we provide an analysis toolbox for
schemes based on polytopal meshes, and we introduce the novel notion of control of a gradient discretisation
by this toolbox. This notion enables us to establish three of the main properties (coercivity,
limit-conformity and compactness) and therefore completes the notion of local linearly 
exact gradient discretisations.

In Section \ref{sec:review}, we show that all methods in the following list
are gradient schemes and satisfy four of the five core properties (coercivity, consistency, limit-conformity, compactness):
conforming and non conforming finite elements, $\RTk$ mixed finite elements, multi-point flux approximation MPFA-O schemes, discrete duality finite volume (DDFV) schemes, hybrid mimetic mixed methods (HMM), nodal mimetic finite difference (nMFD) methods, vertex average gradient (VAG) methods. 
For these methods, the fifth property (piecewise constant reconstruction) is either satisfied by definition, or can be satisfied by a mass-lumped version in the sense of  Subsection \ref{sec:masslump}. The mass-lumped versions are only detailed in the important cases of the conforming and non-conforming $\P_1$ finite elements.
We show that the notions of local linearly exact gradient discretisations, and of control by polytopal toolboxes, apply to most of the considered methods,
and therefore provide very quick proofs that these methods satisfy the consistency, coercivity,
limit-conformity and compactness properties. Some of the schemes have already been more or less formally shown to be gradient schemes in \cite{eym-12-sma,dro-12-gra}, but the proofs provided here thanks to the new
generic tools developed in Section \ref{sec:prop} are much more efficient and elegant than 
in previous works, and can be easily extended to other schemes.

A short conclusion is provided in Section \ref{sec:ccl}.

\section{Gradient discretisations: definitions and analysis tools}\label{sec:prop}

For simplicity we restrict ourselves to homogeneous Dirichlet boundary conditions; all other 
classical boundary conditions (non-homogeneous Dirichlet, Neumann, Fourier or mixed) can
be dealt with seamlessly in the gradient schemes framework \cite{koala}.
The principle of gradient schemes is to write the weak formulation of the PDE by replacing all continuous spaces and operators by discrete analogs. 
These discrete objects are described in a gradient discretisation.
Once a gradient discretisation is defined, its application to a given problem then leads to a gradient scheme.

For linear models, the convergence of gradient schemes
is obtained via error estimates based on the \emph{consistency} and
\emph{limit-conformity} measures $S_\disc$ and $W_\disc$.
For non-linear models, whose solutions may lack regularity or even be non-unique, error estimates may not always be obtained; however, convergence of approximate solutions can be obtained via compactness techniques such as those developed in the finite volume framework \cite{EGH00,eym-07-ana,review}. Even though they do not yield an explicit rate of convergence, these compactness techniques provide strong convergence results  -- such as uniform-in-time convergence \cite{dro-14-deg} -- under assumptions
that are compatible with field applications (discontinuous data, fully non-linear
models, etc.).

\subsection{Definitions} \label{sec:deftools}

\begin{definition}[Gradient discretisation for homogeneous Dirichlet boundary conditions]
\label{defgraddisc} Let $p\in (1,\infty)$ and let $\Omega$ be a bounded open subset of $\R^d$, where $d\in\N\setminus\{0\}$ is the space dimension.
The triplet $\disc = (X_{\disc,0},\Pi_\disc,\nabla_\disc)$ is a gradient discretisation for problems posed on $\Omega$ with homogeneous Dirichlet boundary conditions if:

\begin{enumerate}
\item $X_{\disc,0}$ is a finite dimensional space encoding the degrees of freedom (and accounting for the homogeneous Dirichlet boundary conditions),
\item $\Pi_\disc~:~X_{\disc,0}\to L^p(\O)$ is a linear mapping reconstructing a function in $L^p(\Omega)$ from the degrees of freedom,
\item $\nabla_\disc~:~X_{\disc,0}\to L^p(\O)^d$ is a linear mapping defining a discrete gradient from the degrees of freedom,
\item    $\Vert \nabla_\disc \cdot \Vert_{L^p(\O)^d}$ is a norm on $X_{\disc,0}$.
\end{enumerate}
\end{definition}

Here are the three properties a gradient discretisation needs to satisfy to enable the analysis of the corresponding gradient scheme on linear problems:
\begin{itemize}
\item The \emph{coercivity} ensures uniform discrete Poincar\'e inequalities
for the family of gradient discretisations; this is essential to obtain \emph{a priori}
estimates on the solutions to gradient schemes.
\item The \emph{consistency} states that the family of gradient discretisations
``covers'' the whole energy space of the model (e.g. $H^1_0(\O)$ for the linear
equation \eqref{pblin}).
\item The \emph{limit-conformity} ensures that the family of gradient and function reconstructions asymptotically satisfies the Stokes formula.
\end{itemize}

\begin{definition}[Coercivity] \label{def-coer}
Let $\disc$ be a gradient discretisation in the sense of Definition \ref{defgraddisc} and let $C_\disc$ be the norm of the linear mapping $\Pi_\disc$ defined by
\[
C_\disc =  \max_{u\in X_{\disc,0}\setminus\{0\}}\frac {\Vert \Pi_\disc u\Vert_{L^p(\O)}} {\Vert \nabla_\disc u \Vert_{L^p(\O)^d}}.
\]
A sequence $(\disc_m)_{m\in\N}$ of gradient discretisations in the sense of Definition \ref{defgraddisc} is said to be coercive if  there exists $ C_P \in \R_+$ such that $C_{\disc_m} \le C_P$ for all $m\in\N$.
\end{definition}

\begin{definition}[Consistency] \label{def-cons}
Let $\disc$ be a gradient discretisation in the sense of Definition \ref{defgraddisc} and
let $S_{\disc}:W^{1,p}_0(\O)\to [0,+\infty)$ be defined by
\[
\forall \varphi\in W^{1,p}_0(\O)\,,\;
\dsp S_{\disc}(\varphi) = \min_{u\in X_{\disc,0}}\left(\Vert \Pi_\disc u - \varphi\Vert_{L^p(\O)} + \Vert \nabla_\disc u -
\nabla\varphi\Vert_{L^p(\O)^d}\right).
\]
A sequence $(\disc_m)_{m\in\N}$ of gradient discretisations in the sense of Definition \ref{defgraddisc} is said to be consistent if for all $\varphi\in W^{1,p}_0(\O)$
we have $\lim_{m\to\infty} S_{\disc_m}(\varphi)=0$.
\end{definition}

\begin{definition}[Limit-conformity] \label{def-limconf}
Let $\disc$ be a gradient discretisation in the sense of Definition \ref{defgraddisc}.
We set $p'=\frac{p}{p-1}$, the dual exponent of $p$, and
$W^{\div, p'}(\O) = \{\bvarphi\in L^{p'}(\O)^d\,,\; \div\bvarphi\in L^{p'}(\O)\}$,
and we define
\[
\forall \bvarphi\in W^{\div,p'}(\O)\,,\;
\dsp W_{\disc}(\bvarphi) = \sup_{u\in X_{\disc,0}\setminus\{0\}}\frac{1}{\Vert  \nabla_\disc u \Vert_{L^p(\O)^d}}\left|\int_\O \left(\grad_\disc u(\x)\cdot\bvarphi(\x) + \Pi_\disc u(\x) \div\bvarphi(\x)\right)  \d\x
\right|.
\]
A sequence $(\disc_m)_{m\in\N}$ of gradient discretisations is said to be
limit-conforming if for all $\bvarphi\in W^{\div,p'}(\O)$
we have $\lim_{m\to\infty} W_{\disc_m}(\bvarphi) = 0$.
\end{definition}

To give an idea of how a gradient discretisation gives a converging gradient scheme 
for diffusion equations, let us consider the case of a linear elliptic equation
\begin{equation}\label{pblin}
\left\{\begin{array}{ll}
-\div(A\nabla\bu)=f&\mbox{ in $\O$},\\
\bu=0&\mbox{ on $\partial\O$},
\end{array}\right.\end{equation}
where $A:\O\mapsto\mathcal M_d(\R)$
is a measurable bounded and uniformly elliptic matrix-valued function such
that $A(\x)$ is symmetric for a.e. $\x\in \O$, and
$f\in L^2(\O)$. The solution to problem \eqref{pblin} is understood
in the weak sense:
\begin{equation}\label{pblinw}
\mbox{Find $\bu\in H^1_0(\O)$ such that, for all $\overline{v}\in H^1_0(\O)$,} \quad
\int_\O A(\x)\nabla \bu(\x)\cdot\nabla \overline{v}(\x) \d\x=\int_\O f(\x)\overline{v}(\x)\d\x.
\end{equation}
If $\disc$ is a gradient discretisation with $p=2$, then the corresponding gradient scheme
for \eqref{pblin} consists in writing
\begin{equation}\label{gslin}
\mbox{Find $u\in X_{\disc,0}$ such that, for all $v\in X_{\disc,0}$,}
\quad \int_\O A(\x)\nabla_\disc u(\x)\cdot\nabla_\disc v(\x)\d\x=\int_\O f(\x)
\Pi_\disc v(\x)\d\x.
\end{equation}
As seen here, \eqref{gslin} consists in replacing in \eqref{pblin}
the continuous space $H^1_0(\O)$ and the continuous gradient and function
by their discrete reconstruction from $\disc$.
Reference \cite{eym-12-sma} proves the following error estimate between the solution to \eqref{pblinw} and its gradient scheme approximation \eqref{gslin}:
\be\label{errlin}
\Vert \nabla \bu - \nabla_\disc u\Vert_{L^2(\O)^d}
+ \Vert \bu -\Pi_\disc  u\Vert_{L^2(\O)}
\le   \cter{Cerlin}\left[W_\disc(A\grad \bu) + S_\disc(\bu)\right],
\ee
where $\ctel{Cerlin}$ only depends on $A$ and an upper bound of $C_\disc$ in Definition
\ref{def-coer}. 
This shows that if a sequence $(\disc_m)_{m\in\N}$ of gradient discretisations
is coercive, consistent and limit-conforming, and if $(u_m)_{m\in\N}$ is a sequence of solutions to the corresponding gradient schemes for \eqref{pblin},
then $\Pi_{\disc_m}u_m\to \bu$ in $L^2(\O)$
and $\nabla_{\disc_m}u_m\to\nabla\bu$ in $L^2(\O)^d$. 
The study of a scheme for \eqref{pblin} then amounts to finding a gradient discretisation $\disc$ such that the scheme can be written under the form \eqref{gslin}, and to proving that sequences of such gradient discretisations satisfy the above described properties.
Establishing the consistency and limit-conformity usually consists in obtaining
estimates on $S_\disc$ and $W_\disc$ that give explicit rates of convergence in \eqref{errlin}.

Dealing with non-linear problems might additionally require
one or both of the following properties.
\begin{itemize}
\item The \emph{compactness} is used to deal with low-order non-linearities --
e.g. in semi- or quasi-linear equations.
\item The \emph{piecewise constant reconstruction} corresponds to mass-lumping
and is required to manage certain monotone non-linearities, or non-linearities on the time 
derivative as in Richards' model.
\end{itemize}

\begin{definition}[Compactness] \label{def-comp}
A sequence $(\disc_m)_{m\in\N}$ of gradient discretisations in the sense of Definition \ref{defgraddisc}
is said to be compact if, for any sequence $(u_m)_{m\in \N}$ such that $u_m\in X_{\disc_m,0}$ for $m \in \N$  and  $(\Vert  \nabla_{\disc_m}u_m \Vert_{L^p(\O)^d})_{m\in\N}$ is bounded, the sequence $(\Pi_{\disc_m}u_m)_{m\in\N}$  is relatively compact in $L^p(\O)$.
\end{definition}

\begin{definition}[Piecewise constant reconstruction] \label{def:pwrec}
Let  $\disc = (X_{\disc,0}, \Pi_\disc,\nabla_\disc)$ be a gradient discretisation in the sense of Definition \ref{defgraddisc}.
The linear mapping  $\Pi_\disc~:~X_{\disc,0}\to L^p(\Omega)$ is a piecewise constant reconstruction if there exists a finite set $B$, a basis $(e_i)_{i\in B}$ of $X_{\disc,0}$ and a family of (possibly empty) disjoint subsets $(V_i)_{i\in B}$ of $\O$ such that for all $u=\sum_{i\in B}u_i e_i\in X_{\disc,0}$
we have $\Pi_\disc u = \sum_{i\in B}u_i\chi_{V_i}$, where $\chi_{V_i}$
is the characteristic function of $V_i$.
\end{definition}

\begin{remark}\label{rem:setiforpiecewise}
Piecewise constant reconstructions generally use as set $B$ the set $I$ of geometrical entities attached to the degrees of freedom of the method (see Definition \ref{def:LL}); 
in this case  $(e_i)_{i\in B}$ is the canonical basis of $X_{\disc,0}$. 
Note that it is possible, starting from
a generic gradient discretisation $\disc$, to replace the original reconstruction $\Pi_\disc$ by
a reconstruction that is piecewise constant; the new gradient discretisation thus obtained is called a mass-lumped version of $\disc$ (see Section \ref{sec:masslump}).
\end{remark}

As an illustration of the use of  the importance of these properties for nonlinear problems, let us consider the following semi-linear modification of \eqref{pblin}:
\begin{equation}\label{pbnl}
\left\{\begin{array}{ll}
-\div(A\nabla\bu)+\beta(\bu)=f&\mbox{ in $\O$},\\
\bu=0&\mbox{ on $\partial\O$},
\end{array}\right.\end{equation}
for some function $\beta$ such that $\beta(s)s\ge 0$ for all $s\in\R$.
The gradient discretisation of this problem is pretty straightforward:
find $u\in X_{\disc,0}$ such that, for all $v\in X_{\disc,0}$,
\be\label{gsnl}
\int_\O A(\x)\nabla_\disc u(\x)\cdot\nabla_\disc v(\x)\d\x
+\int_\O \beta(\Pi_\disc u(\x))\Pi_\disc v(\x)\d\x=\int_\O f(\x)\Pi_\disc v(\x)\d\x.
\ee
The compactness property implies that an estimate on a sequence of discrete gradient $(\nabla_{\disc_m} u_m)_{m\in \N}$ will yield relative compactness of the corresponding sequence of reconstructions $(\Pi_{\disc_m} u_m)_{m\in \N}$, thus enabling a passing to the limit in the nonlinearity $\beta$. 

The formulation \eqref{gsnl} ensures \emph{a priori} estimates on the solution since,
taking $v=u$, the term $\beta(\Pi_\disc u)\Pi_\disc u$ is non-negative. However,
in practical implementation, computing $\int_\O \beta(\Pi_\disc u(\x))\Pi_\disc v(\x)\d\x$
might be problematic; even if $\Pi_\disc u$ and $\Pi_\disc v$ are polynomials
on each cell of a mesh (as in finite element schemes), $\beta(\Pi_\disc u)\Pi_\disc v$ is locally
not polynomial and no exact quadrature rule might exist to compute its integral.
An alternative scheme consists in replacing, in \eqref{gsnl}, the term
\be\label{gsnlA}
\int_\O \beta(\Pi_\disc u(\x))\Pi_\disc v(\x)\d\x
\ee
with
\be\label{gsnlB}
\int_\O \Pi_\disc \beta(u)(\x)\Pi_\disc v(\x)\d\x,
\ee
where $\beta(u) \in X_{\disc,0}$ is defined degree-of-freedom per degree-of-freedom,
that is $(\beta(u))_i=\beta(u_i)$ for all $i\in B$ with the notations of Definition \ref{def:pwrec}.
For finite element methods, \eqref{gsnlB} consists in integrating polynomials on
cells, and exact quadrature rules can be used. However, this alternative scheme does not ensure
\emph{a priori} estimates on the solution since, with the choice $v=u$, the term
$\Pi_\disc \beta(u)\Pi_\disc u$ might be negative in some part of the domain. Hence,
we have to choose between unconditional stability (\emph{a priori} estimates) with \eqref{gsnlA},
or a scheme that is practical to implement with \eqref{gsnlB}.

One of the interests of piecewise constant reconstructions is to solve this apparent
contradiction. If $\Pi_\disc$ is a piecewise constant reconstruction then 
\begin{equation}
 \Pi_\disc (\beta(u))=\beta(\Pi_\disc u).
\label{def:proppc}\end{equation}
Hence \eqref{gsnlA} and \eqref{gsnlB} are identical, and both
stability and computational practicality are satisfied. 
The second interest of piecewise constant resconstructions can be found in the analysis of
time-dependent problems. The discretisation of $\partial_t u$ leads to a term of the form
\be\label{time}
\int_\O \frac{\Pi_\disc u^{n+1}(\x)-\Pi_\disc u^n(\x)}{\delta\!t}\Pi_\disc v(\x)\d \x.
\ee
If $\Pi_\disc$ is a piecewise constant reconstruction, the mass matrix multiplying the coordinates $(u^{n+1}_i)_{i\in B}$ of $u^{n+1}$ in \eqref{time} is diagonal, and its inversion is therefore trivial.

As shown in \cite{dro-14-deg,eym-12-stef,zamm2013},
piecewise constant reconstructions ensure the stability and convergence
of gradient schemes for a variety of non-linear elliptic and parabolic equations.

\begin{remark}\label{rem:eqdef}
\begin{enumerate}
\item The consistency, limit-conformity and compactness of gradient discretisations may be defined in other equivalent ways \cite{koala}.
Moreover, the consistency of a sequence of gradient discretisations only needs to be checked
for $\varphi$ in a dense set of the domain of $S_\disc$
(e.g. $C^\infty_c(\O)$). The limit-conformity 
of a coercive sequence of gradient discretisations only needs to be checked for $\bvarphi$ in a dense set of the domain of $W_\disc$
(e.g. $C^\infty_c(\R^d)^d$, which is indeed dense in $W^{\div,p'}(\O)$ when $\O$ is locally star-shaped, which is
the case if $\O$ is polytopal).
Finally, the compactness of a sequence of gradient discretisations implies its coercivity.

\item Gradient discretisations for time-dependent problems can be easily deduced 
from the gradient discretisations for steady-state problems \cite{dro-12-gra,koala}.

\end{enumerate}
\end{remark}

 \subsection{Local linearly exact gradients}\label{sec:LL}

Most numerical methods for diffusion equations are based, explicitly or
implicitly, on local linearly exact reconstructed gradients. The following definition
gives a precise meaning to this.

\begin{definition}[Linearly exact gradient reconstructions]\label{def:linex}
Let $U$ be a bounded set of $\R^d$, let $I$ be a finite set and let $S = (\x_i)_{i\in I}$ be a family of points of $\R^d$.
A linear mapping $\gr:\R^I\mapsto L^\infty(U)^d$ is a linearly exact gradient reconstruction upon $S$ if, for any affine function $L:\R^d \to \R$, if $\xi=(L(\x_i))_{i\in I}$ then $\gr \xi = \nabla L$ on $U$.
The norm of $\gr$ is defined by 
\begin{equation}
 \Vert \gr\Vert_{\infty} = {\rm diam}(U)\max_{\xi\in \R^I\setminus\{0\}} \frac {||\gr \xi||_{L^\infty(U)^d}} {\max_{i\in I}|\xi_i|}.
\label{def:norlingrad}\end{equation}
\end{definition}

As expected, linearly exact gradient reconstructions enjoy nice approximation properties
when computed from interpolants of smooth functions.

\begin{lemma}[Estimate for linearly exact gradient reconstructions]\label{lem:stlinex}
Let $U$ be a bounded set of $\R^d$, let $S =(\x_i)_{i\in I}\subset \R^d$, and let
$\gr:\R^I\mapsto L^\infty(U)^d$ be a linearly exact gradient reconstruction upon $S$
in the sense of Definition \ref{def:linex}. Let $\varphi \in W^{2,\infty}(\R^d)$ and define $v\in \R^I$
by $v_i=\varphi(\x_i)$ for any $i\in I$. 
Then
\[
 |\gr v -\nabla\varphi|\le 
\left(1+ \frac 1 2\Vert \gr\Vert_{\infty}\left(\frac {\max_{i\in I} \dist(\x_i,U)}{ {\rm diam}(U)}  +1  \right)^2\right) {\rm diam}(U)||\varphi||_{W^{2,\infty}(\R^d)}
\quad\mbox{ a.e. on $U$}.
\]
\end{lemma}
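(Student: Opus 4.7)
The plan is to run a standard Taylor-expansion/linear-exactness argument: approximate $\varphi$ by its affine Taylor polynomial at some base point $\y\in U$, use linear exactness to recognise $\gr$ applied to the interpolant of that polynomial as its exact gradient, and bound the remaining error through the operator norm of $\gr$.

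More precisely, first I would pick any $\y\in U$ and introduce the affine map $L(\x)=\varphi(\y)+\nabla\varphi(\y)\cdot(\x-\y)$. Setting $\xi=(L(\x_i))_{i\in I}\in\R^I$, Definition \ref{def:linex} gives $\gr\xi=\nabla L=\nabla\varphi(\y)$ (as a constant vector field) on $U$. Writing $v=(\varphi(\x_i))_{i\in I}$, linearity yields
\[
\gr v -\nabla\varphi(\x) = \gr(v-\xi)(\x) + \bigl(\nabla\varphi(\y)-\nabla\varphi(\x)\bigr)\quad\text{for a.e. }\x\in U,
\]
and I estimate the two pieces separately by the triangle inequality.

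The second piece is immediate from $\varphi\in W^{2,\infty}(\R^d)$: since $\x,\y\in U$,
\[
|\nabla\varphi(\y)-\nabla\varphi(\x)|\le \|\varphi\|_{W^{2,\infty}(\R^d)}\,{\rm diam}(U),
\]
which already produces the ``$+1$'' term in the target bound. For the first piece, Taylor's theorem applied to $\varphi$ between $\y$ and $\x_i$ gives
\[
|v_i-\xi_i|=|\varphi(\x_i)-L(\x_i)|\le \tfrac12\|\varphi\|_{W^{2,\infty}(\R^d)}\,|\x_i-\y|^2.
\]
Choosing $\y\in U$ and using the triangle inequality through any near-minimiser of $\dist(\x_i,U)$ yields $|\x_i-\y|\le \dist(\x_i,U)+{\rm diam}(U)$, hence
\[
\max_{i\in I}|v_i-\xi_i|\le \tfrac12\|\varphi\|_{W^{2,\infty}(\R^d)}\,{\rm diam}(U)^2\left(\frac{\max_{i\in I}\dist(\x_i,U)}{{\rm diam}(U)}+1\right)^{2}.
\]
Invoking the definition \eqref{def:norlingrad} of $\|\gr\|_\infty$, which precisely means $\|\gr\eta\|_{L^\infty(U)^d}\le \|\gr\|_\infty\,{\rm diam}(U)^{-1}\max_i|\eta_i|$, I obtain
\[
\|\gr(v-\xi)\|_{L^\infty(U)^d}\le \tfrac12\|\gr\|_\infty\left(\frac{\max_{i\in I}\dist(\x_i,U)}{{\rm diam}(U)}+1\right)^{2}{\rm diam}(U)\,\|\varphi\|_{W^{2,\infty}(\R^d)}.
\]
Adding the two bounds gives the stated inequality. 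There is no real obstacle: the only point requiring care is the geometric estimate $|\x_i-\y|\le \dist(\x_i,U)+{\rm diam}(U)$, which is what makes the factor $(\max_i\dist/{\rm diam}+1)^2$ appear; everything else is purely algebraic combination of the norm definition, linear exactness, and a first-order Taylor remainder.
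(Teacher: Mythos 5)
Your proof is correct and follows essentially the same route as the paper's: both expand $\varphi$ to first order at a base point of $U$, use linear exactness to identify $\gr$ applied to the interpolant of the Taylor polynomial with $\nabla\varphi$ at that point, bound $|v_i-\xi_i|$ by a second-order Taylor remainder together with $|\x_i-\y|\le \dist(\x_i,U)+{\rm diam}(U)$, and conclude via the definition \eqref{def:norlingrad} of $\Vert\gr\Vert_\infty$. The only differences are notational.
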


\begin{proof} Take $\x_U\in U$ and let $L(\x)=\varphi(\x_U)+\nabla \varphi(\x_U)
\cdot(\x-\x_U)$ be the first order Taylor expansion of $\varphi$ around $\x_U$.
Let $\xi=(L(\x_i))_{i\in I}$. By linear exactness of $\gr$ we have
$\gr \xi=\nabla L=\nabla\varphi(\x_U)$ on $U$. Hence,
\be\label{est:lem:stlinex1}
|\gr\xi -\nabla\varphi|\le {\rm diam}(U)||\varphi||_{W^{2,\infty}(\R^d)}\quad\mbox{ on $U$}.
\ee
For any $i\in I$ we have $(v-\xi)_i=\varphi(\x_i)-L(\x_i)=\varphi(\x_i)
-\varphi(\x_U)-\nabla\varphi(\x_U) \cdot(\x_i-\x_U)$. Since $|\x_i-\x_U|\le \dist(\x_i,U) + {\rm diam}(U)$, we get
$|(v-\xi)_i|\le \frac 1 2 (\dist(\x_i,U) + {\rm diam}(U))^2||\varphi||_{W^{2,\infty}(\R^d)}$.
The linearity of $\gr$ and the definition of its norm therefore imply, for a.e. $\x\in U$,
\begin{eqnarray*}
|\gr v(\x) - \gr \xi(\x)|=|\gr (v-\xi)(\x)|\le \frac{\Vert \gr\Vert_{\infty}}{{\rm diam}(U)}\frac 1 2 \left(\max_{i\in I}\dist(\x_i,U) + {\rm diam}(U)\right)^2
||\varphi||_{W^{2,\infty}(\R^d)}\\
\le \frac 1 2\Vert \gr\Vert_{\infty}{\rm diam}(U) \left(\frac {\max_{i\in I} \dist(\x_i,U)}{ {\rm diam}(U)}  +1 \right)^2||\varphi||_{W^{2,\infty}(\R^d)}. 
\end{eqnarray*}
Combined with \eqref{est:lem:stlinex1}, this completes the proof of the lemma. \end{proof}

The consistency of gradient discretisations based on linearly exact
gradient reconstructions follows. Let us first give the
the definition of such gradient discretisations.

\begin{definition}[LLE gradient discretisation]\label{def:LL}
The triplet $\disc=(X_{\disc,0},\Pi_\disc,\nabla_\disc)$ is  an LLE (for ``local linearly exact'') gradient discretisation if there exists a finite partition $\partition$ of $\O$, 
a set $I$ of geometrical entities attached to the degrees of freedom (dof), a finite family of approximation points $S = (\x_i)_{i\in I}\subset \R^d$ and, for any $U \in \partition$, a subset $I_U \subset I$ such that:
\begin{enumerate}
\item $X_{\disc,0}=\R^{\Ii}\times \{0\}^{\Ib}$, where  the set $I$ is partitioned into $\Ii$ (interior geometrical entities attached to the dof) and ${\Ib}$ (boundary geometrical entities attached to the dof).
\item There exists a family $(\alpha_i)_{i\in I}$ such that, for all $i \in I$, $\alpha_i \in L^\infty(\Omega)$ and
\be\label{ll:piD}\begin{array}{lll}
\mbox{(a) $\forall i\in I$, $\forall U\in\partition$, if $i\notin I_U$ then
$\alpha_i=0$ on $U$},\\
\dsp \mbox{(b) for a.e. $\x \in \Omega$},\ \sum_{i\in I}\alpha_i(\x) = 1\mbox{ and }\forall v\in X_{\disc,0},\
\Pi_\disc v(\x) = \sum_{i\in I}\alpha_i(\x) v_i.
\end{array}\ee
\item There exists a family $(\gr_U)_{U\in\partition}$ such that, for all
$U\in\partition$, $\gr_U:\R^{I_U}\mapsto L^\infty(U)^d$ is a linearly exact gradient reconstruction upon $(\x_i)_{i\in I_U}$, in the sense of Definition \ref{def:linex},
and $\nabla_\disc v=\gr_U \bigl((v_i)_{i\in I_U}\bigr)$ on $U$, for all $v\in X_{\disc,0}$.

\item $\Vert \nabla_\disc \cdot \Vert_{L^p(\O)^d}$ is a norm on $X_{\disc,0}$.% (this implies that $I= \bigcup_{U \in \partition} I_U$).
\end{enumerate}
In that case, we define the LLE regularity of $\disc$ by
\begin{equation}
  \regLLE(\disc)=\max_{U\in\partition} \left(\Vert \gr_U\Vert_{\infty} + \max_{i\in I_U}
\frac{{\rm dist}(\x_i,U)}{{\rm diam}(U)}\right) +  \mathop{\rm esssup}\limits_{\x\in \Omega} \sum_{i\in I} \vert \alpha_i(\x) \vert.
\label{def:reglle}
\end{equation}
\end{definition}

\begin{remark}\label{rem:identicalpoints}
As implied by the terminology, an LLE gradient discretisation is a gradient discretisation in the sense of Definition \ref{defgraddisc}. 
Note that the existence of $i,j\in I$ with $i\neq j$ and $\x_i = \x_j$ is not excluded (see, e.g., Section \ref{sec:mpfa}).
\end{remark}

\begin{remark}\label{rem:estalpha}
We do not request $\Pi_\disc v$ to be linearly exact ($\alpha_i$ is not necessarily affine in each $U$); this reconstruction just needs to be computable from local degrees of freedom,
and exact on interpolants of constant functions.
This enables us to consider mass-lumped gradient discretisations.

In a number of cases, estimating $\sum_{i\in I}|\alpha_i(\x)|$ for a.e. $\x\in \Omega$ is trivial. 
For example, if for a.e. $\x\in \Omega$, there is exactly one $i\in I$ such that $\alpha_i(\x)=1$ and   $\alpha_j(\x)=0$ for all $j\in I\setminus\{i\}$, we get $\sum_{i\in I}|\alpha_i(\x)|= 1$ a.e. (then $\disc$ has a piecewise constant reconstruction and the set $B$ defined in Definition \ref{def:pwrec} is identical to $I$). Another example is the case where, for a.e. $\x\in U$, $\Pi_\disc v(\x)$ is a convex combination of the dof $(v_i)_{i\in I_U}$  (which is the case, e.g., if $\Pi_\disc v$ is linear on $U$, $v_i=\Pi_\disc v(\x_i)$ and $(\x_i)_{i\in I_U}$ are extremal points of $U$); then
$\alpha_i\ge 0$ for all $i\in I$ and $\sum_{i\in I}|\alpha_i(\x)|= 1$.
\end{remark}

\begin{proposition}[LLE gradient discretisations are consistent]\label{prop:LL}
Let $(\disc_m)_{m\in\N}$ be a sequence of LLE gradient discretisations,
associated for any $m\in\N$ to a partition $\partition_m$.
If $(\regLLE(\disc_m))_{m\in\N}$ is bounded and if $\displaystyle\max_{U\in\partition_m}{\rm diam}(U)$ $\to 0$ as $m\to\infty$, then $(\disc_m)_{m\in\N}$ is consistent in the sense of Definition
\ref{def-cons}.
\end{proposition}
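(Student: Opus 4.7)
The plan is to prove the proposition by constructing an explicit interpolant of a test function in a dense subspace of $W^{1,p}_0(\Omega)$, and estimating the consistency error on each cell of the partition. By Remark \ref{rem:eqdef}, it suffices to check the consistency condition on $\varphi \in C^\infty_c(\Omega)$. I extend any such $\varphi$ by zero to all of $\R^d$, so that $\varphi \in C^\infty_c(\R^d) \subset W^{2,\infty}(\R^d)$, and I set $K = \supp(\varphi) \subset\subset \Omega$.

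For each $m$, I define the natural interpolant $P_m\varphi \in X_{\disc_m,0}$ by $(P_m\varphi)_i = \varphi(\x_i)$ for $i\in \Ii$ and $(P_m\varphi)_i = 0$ for $i\in \Ib$. In the applications that fall into the LLE framework, the boundary dofs correspond to points $\x_i$ lying on or outside $\partial\O$, where the extended $\varphi$ vanishes; thus for $m$ large the identity $(P_m\varphi)_i = \varphi(\x_i)$ holds uniformly for \emph{all} $i\in I$. Then on each $U\in\partition_m$, I have $\nabla_{\disc_m} P_m\varphi = \gr_U((\varphi(\x_i))_{i\in I_U})$, and Lemma \ref{lem:stlinex} applied to $\gr_U$ yields
\[
\left| \nabla_{\disc_m} P_m\varphi(\x) - \nabla\varphi(\x) \right|
\le C_1 \,{\rm diam}(U)\, \Vert\varphi\Vert_{W^{2,\infty}(\R^d)}
\quad\text{a.e. on }U,
\]
with a constant $C_1$ depending only on the bound on $\regLLE(\disc_m)$ (which controls both $\|\gr_U\|_\infty$ and $\max_{i\in I_U}\dist(\x_i,U)/{\rm diam}(U)$).

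For the function error, I use the partition-of-unity identity $\sum_{i\in I}\alpha_i(\x)=1$ together with the support condition on $\alpha_i$ from \eqref{ll:piD}(a) to write, for a.e.\ $\x\in U$,
\[
\Pi_{\disc_m} P_m\varphi(\x) - \varphi(\x)
= \sum_{i\in I_U}\alpha_i(\x)\bigl(\varphi(\x_i)-\varphi(\x)\bigr).
\]
Each increment $|\varphi(\x_i)-\varphi(\x)|$ is bounded by $\Vert\varphi\Vert_{W^{1,\infty}}|\x_i-\x| \le \Vert\varphi\Vert_{W^{1,\infty}}(\regLLE(\disc_m)+1)\,{\rm diam}(U)$, while $\sum_{i\in I}|\alpha_i(\x)|$ is controlled directly by $\regLLE(\disc_m)$, so
\[
\left| \Pi_{\disc_m} P_m\varphi(\x) - \varphi(\x) \right|
\le C_2 \,{\rm diam}(U)\, \Vert\varphi\Vert_{W^{1,\infty}(\R^d)}.
\]
Raising both pointwise bounds to the power $p$ and integrating over $\Omega$ yields $\Vert\Pi_{\disc_m}P_m\varphi-\varphi\Vert_{L^p(\O)} + \Vert\nabla_{\disc_m}P_m\varphi-\nabla\varphi\Vert_{L^p(\O)^d} \le C \max_{U\in\partition_m}{\rm diam}(U)$, which tends to $0$, proving $S_{\disc_m}(\varphi)\to 0$.

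The main obstacle is the bookkeeping around the boundary degrees of freedom: one must justify that forcing $(P_m\varphi)_i=0$ for $i\in \Ib$ is harmless, i.e.\ that this value agrees with $\varphi(\x_i)$. Once this is handled (either by the natural geometric placement of boundary dofs in all concrete examples, or by truncation combined with the compactness of $\supp\varphi$ inside $\O$ for $m$ large enough), the rest of the proof is a routine application of Lemma \ref{lem:stlinex} and the uniform control provided by the boundedness of $\regLLE(\disc_m)$.
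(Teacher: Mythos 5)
Your proof is correct and follows essentially the same route as the paper's: interpolate $\varphi\in C^\infty_c(\O)$ at the approximation points, apply Lemma \ref{lem:stlinex} cell by cell for the gradient error, use the partition-of-unity property of the $\alpha_i$ together with the bound on $\sum_i|\alpha_i|$ for the reconstruction error, and invoke the density reduction of Remark \ref{rem:eqdef}. The boundary-dof issue you flag is resolved the same way the paper resolves it (silently), since the paper simply writes $v^m=(\varphi(\x^m_i))_{i\in I^m}\in X_{\disc_m,0}$, implicitly assuming that $\varphi$ vanishes at the approximation points attached to boundary entities.
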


\begin{proof} 
Let $\varphi\in C^\infty_c(\O)$ and let $v^m=(\varphi(\x^m_i))_{i\in I^m}\in X_{\disc_m,0}$,
where $S_m=(\x^m_i)_{i\in I^m}$ is the family of approximation points  of $\disc_m$.
Owing to Lemma \ref{lem:stlinex} we have, for $U\in \partition_m$ and
a.e. $\x\in U$,
\begin{multline}\label{cv:grad}
|\nabla_{\disc_m}v^m(\x)-\nabla\varphi(\x)|=
|\gr^m_U((v^m_i)_{i\in I^m_U})(\x)-\nabla\varphi(\x)| \\
\le \left(1+ \frac 1 2\regLLE(\disc_m)(\regLLE(\disc_m) +1 )^2\right)
{\rm diam}(U)||\varphi||_{W^{2,\infty}(\R^d)}.
\end{multline}
Let us now evaluate $|\Pi_{\disc_m}v^m-\varphi|$. Since any $(\x^m_i)_{i\in I^m_U}$ is within
distance $\regLLE(\disc_m){\rm diam}(U)$ of $U$, for all $i\in I^m_U$ and all $\x\in U$ we have
$|v^m_i-\varphi(\x)|\le (1+\regLLE(\disc_m)){\rm diam}(U)||\varphi||_{W^{1,\infty}(\R^d)}$. By \eqref{ll:piD}, we infer that for a.e. $\x\in U$
\begin{multline}\label{cv:pid}
|\Pi_{\disc_m}v^m(\x)-\varphi(\x)|=\left|\sum_{i\in I^m_U}\alpha^m_i(\x)(v_i^m-\varphi(\x))\right|
\le \sum_{i\in I^m_U}|\alpha^m_i(\x)|~\sup_{i\in I^m_U}|v^m_i-\varphi(\x)|\\
\le \regLLE(\disc_m)(1+\regLLE(\disc_m)){\rm diam}(U)||\varphi||_{W^{1,\infty}(\R^d)}.
\end{multline}
Estimates \eqref{cv:grad} and \eqref{cv:pid} and the
assumptions on $(\disc_m)_{m\in\N}$ show that $\nabla_{\disc_m}v^m
\to \nabla \varphi$ in $L^\infty(\O)^d$ and $\Pi_{\disc_m}v^m\to \varphi$ in $L^\infty(\O)$ as $m\to\infty$. Remark \ref{rem:eqdef} then concludes the proof. \end{proof}

%\begin{remark} The term ${\rm diam}(U)$ in $\regLLE(\disc)$ could be replaced with
%any quantity $\omega_U>0$, the requirement to prove Proposition \ref{prop:LL} being that, for a sequence $(\disc_m)_{m\in\N}$ of LLE gradient discretisations,
%associated for any $m\in\N$ to a partition $\partition_m$, there holds $\max_{U\in\partition_m}\omega_U\to 0$ as $m\to\infty$.
%\end{remark}

\subsection{Barycentric elimination of degrees of freedom}\label{sec:bc}

The construction of a given scheme often requires several  interpolation points.
However, some of these points can be eliminated afterwards to reduce the computational
cost. A classical way to perform this reduction of degrees of freedom is
through barycentric combinations, by replacing certain
unknowns with averages of other unknowns. We describe here
a way to perform this reduction in the general context of LLE gradient discretisations,
while preserving the required properties
(coercivity, consistency, limit-conformity and compactness).

\begin{definition}[Barycentric condensation of an LLE gradient discretisation]\label{def:bcgd}
Let $\disc$ be an LLE gradient discretisation. 
We denote by $S = (\x_i)_{i\in I}\subset \R^d$  the  family of approximation points of $\disc$ and by $\partition$ its partition. A gradient discretisation $\bcdisc$ is a barycentric condensation of ${\disc}$ 
if there exists $\bcI\subset I$ and, for all $i\in I\backslash \bcI$,
a set $H_i\subset \bcI$ and real numbers
$(\beta^i_{ j})_{{j}\in H_i}$ satisfying 
\be\label{barcomb}
\sum_{{j}\in H_i}\beta^i_{j} =1\quad\mbox{ and }\quad
\sum_{{j}\in H_i}\beta^i_{j} \x_{j}=\x_i,
\ee
such that
\begin{itemize}
\item ${\Ib}\subset {\bcI}$.
\item $X_{\bcdisc,0}=\R^{{\bcI}\cap \Ii}\times \{0\}^{\Ib}$.
\item For all $v\in X_{\bcdisc,0}$ we have $\Pi_{\bcdisc}v=\Pi_\disc V$
and $\nabla_{\bcdisc} v=\nabla_\disc V$, where $V\in X_{\disc,0}=
\R^{\Ii}\times \{0\}^{\Ib}$ is defined
by
\be\label{bc:extv}
\forall i\in I\,,\;V_i = \left\{\begin{array}{ll}v_i&\mbox{ if $i\in {\bcI}$},\\
\sum_{{j}\in H_i}\beta^i_{j}  v_{j}&\mbox{ if $i\in I\setminus{\bcI}$}.
\end{array}\right.
\ee
(We note that $V$ is indeed in $X_{\disc,0}$ since ${\Ib}\subset {\bcI}$ and $v_i=0$
if $i\in {\Ib}$.)
\end{itemize}
We define the regularity of the barycentric condensation $\bcdisc$ by
\[
\regbc(\bcdisc)=1+\max_{i\in I\setminus {\bcI}}
\left(\sum_{{j}\in H_i}|\beta^i_{j}| + \max_{U\in\partition\,|\,i\in I_U}
\max_{{j}\in H_i} \frac{{\rm dist}(\x_j ,\x_i)}{{\rm diam}(U)}\right).
\]
\end{definition}

It is clear that the above defined barycentric condensation $\bcdisc$ is a gradient discretisation. 
Indeed, if $\nabla_\bcdisc v=0$ on $\O$ then $\nabla_\disc V=0$ on $\O$ and thus $V_i=0$ for all $i\in S$ (since $\disc$ is a gradient discretisation and therefore $||\nabla_\disc\cdot||_{L^p(\O)^d}$ is a norm on $X_{\disc,0}$). 
This shows that $v_i=0$ for all $i\in {\bcI}$, and thus that $||\nabla_\bcdisc \cdot||_{L^p(\O)^d}$ is a norm on $X_{\bcdisc,0}$.

\begin{remark}[Localness of the barycentric elimination]
Bounding the last term in $\regbc(\bcdisc)$ consists in requiring that, if $i\in I\setminus {\bcI}$ is involved in the definition of
$\gr_U$, then any degree of freedom ${j}\in H_i$ used to eliminate the degree of freedom $i$ lies within distance $\mathcal O({\rm diam}(U))$ of $U$. 
This ensures that, after barycentric elimination, $\gr_U$ is still computed using only
degrees of freedom in a neighborhood of $U$.
\end{remark}

Barycentric elimination expresses some degrees of freedom by combinations that are linearly exact. 
As a consequence, the LLE property is preserved in the process, and the consistency of barycentric condensations of LLE gradient discretisations
is ensured by Proposition \ref{prop:LL}.

\begin{lemma}[Barycentric elimination preserves the LLE property]\label{lem:bcll}
Let $\disc$ be an LLE gradient discretisation in the sense of Definition \ref{def:linex}, and let $\bcdisc$ be a barycentric condensation of $\disc$. 
Then $\bcdisc$ is an LLE gradient discretisation on the same partition as $\disc$, and $\regLLE(\bcdisc)\le \regbc(\bcdisc)\regLLE(\disc)+
\regbc(\bcdisc)+\regLLE(\disc)$.
\end{lemma}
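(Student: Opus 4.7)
The plan is to build explicitly, from the LLE data $(\partition, I, S, (I_U)_U, (\alpha_i)_i, (\gr_U)_U)$ of $\disc$, the corresponding LLE data for $\bcdisc$ on the \emph{same} partition $\partition$. Naturally I keep $S^{\rm Ba}=(\x_i)_{i\in\bcI}$ as approximation points and, for each $U\in\partition$, I set
\[
\bcIU \;=\; (I_U\cap\bcI)\;\cup\;\bigcup_{i\in I_U\setminus\bcI} H_i \;\subset\;\bcI.
\]
The reconstruction coefficients are obtained by "pushing" each eliminated coefficient $\alpha_i$ onto its barycentric parents: for $j\in\bcI$, define
\[
\bar\alpha_j \;=\; \alpha_j \;+\; \sum_{i\in I\setminus\bcI,\; j\in H_i} \beta^i_j\,\alpha_i \;\in\; L^\infty(\Omega).
\]
The linearly exact gradient reconstruction on $U$ is defined by expansion: for $w\in\R^{\bcIU}$, let $W\in\R^{I_U}$ be given by $W_j=w_j$ if $j\in I_U\cap\bcI$ and $W_i=\sum_{j\in H_i}\beta^i_j w_j$ if $i\in I_U\setminus\bcI$, and set $\bar\gr_U(w)=\gr_U(W)$.

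Next I check the four items of Definition \ref{def:LL}. (i) The space $X_{\bcdisc,0}=\R^{\bcI\cap\Ii}\times\{0\}^{\Ib}$ is given. (ii) Locality of $\bar\alpha_j$ on $U\in\partition$: if $j\in\bcI\setminus\bcIU$ then $j\notin I_U$ (so $\alpha_j=0$ on $U$) and, for every $i\in I\setminus\bcI$ with $j\in H_i$, having $i\in I_U$ would force $j\in H_i\subset\bcIU$, a contradiction; thus $\alpha_i=0$ on $U$ and $\bar\alpha_j=0$ on $U$. The partition-of-unity identity follows from $\sum_{j\in H_i}\beta^i_j=1$:
\[
\sum_{j\in\bcI}\bar\alpha_j \;=\; \sum_{j\in\bcI}\alpha_j + \sum_{i\in I\setminus\bcI}\alpha_i\sum_{j\in H_i}\beta^i_j \;=\; \sum_{i\in I}\alpha_i \;=\; 1\quad\text{a.e.}
\]
Moreover, for $v\in X_{\bcdisc,0}$ with extension $V$ given by \eqref{bc:extv}, a direct rearrangement yields $\Pi_\bcdisc v = \Pi_\disc V = \sum_{j\in\bcI}\bar\alpha_j\,v_j$. (iii) Linear exactness of $\bar\gr_U$: if $w_j=L(\x_j)$ for some affine $L$, the barycentric identity $\sum_{j\in H_i}\beta^i_j\x_j=\x_i$ combined with $\sum_j\beta^i_j=1$ gives $W_i=L(\x_i)$ for every $i\in I_U$, whence $\bar\gr_U(w)=\gr_U(W)=\nabla L$ on $U$ by linear exactness of $\gr_U$; and $\nabla_\bcdisc v=\nabla_\disc V=\bar\gr_U((v_j)_{j\in\bcIU})$ on $U$ by construction. (iv) That $\|\nabla_\bcdisc\cdot\|_{L^p}$ is a norm is already established in the paragraph following Definition \ref{def:bcgd}.

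It remains to bound $\regLLE(\bcdisc)$. From $|W_i|\le(\sum_{j\in H_i}|\beta^i_j|)\max_{k\in\bcIU}|w_k|$, the definition \eqref{def:norlingrad} of $\|\cdot\|_\infty$ gives $\|\bar\gr_U\|_\infty\le \regbc(\bcdisc)\,\|\gr_U\|_\infty$. For $j\in\bcIU\cap I_U$, $\dist(\x_j,U)/\diam(U)\le\regLLE(\disc)$; for $j\in H_i$ with $i\in I_U\setminus\bcI$, the triangle inequality and the definition of $\regbc$ give $\dist(\x_j,U)/\diam(U)\le (\regbc(\bcdisc)-1)+\regLLE(\disc)$. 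Finally,
\[
\sum_{j\in\bcI}|\bar\alpha_j(\x)| \;\le\; \sum_{j\in\bcI}|\alpha_j(\x)| + \sum_{i\in I\setminus\bcI}|\alpha_i(\x)|\sum_{j\in H_i}|\beta^i_j| \;\le\; \regbc(\bcdisc)\sum_{i\in I}|\alpha_i(\x)|.
\]
Summing the three contributions in \eqref{def:reglle} and using $\regbc,\regLLE\ge 1$ gives the announced estimate $\regLLE(\bcdisc)\le \regbc(\bcdisc)\regLLE(\disc)+\regbc(\bcdisc)+\regLLE(\disc)$.

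The only real subtlety is the linear exactness of $\bar\gr_U$: it is precisely the point where the two identities in \eqref{barcomb} are used, and it is what forces the localness hypothesis built into $\regbc$ to be compatible with the approximation-point geometry encoded by $\regLLE$. Everything else is clean bookkeeping once the definitions of $\bcIU$, $\bar\alpha_j$ and $\bar\gr_U$ are in place.
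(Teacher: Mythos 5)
Your proof is correct and follows essentially the same route as the paper's: the same set $I_U^{\textsc{Ba}}$, the same pushed-forward coefficients $\widetilde\alpha$, linear exactness of $\widetilde{\gr}_U$ via the linear exactness of the expansion $v\mapsto V$ from \eqref{barcomb}, and the same three bounds combined into the stated estimate. The only (harmless) differences are that you define the new coefficients globally rather than cell by cell and that you spell out the locality condition \eqref{ll:piD}(a), which the paper leaves implicit.
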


\begin{proof} Obviously, ${\bcI}=({\bcI}\cap \Ii)\sqcup {\Ib}$ forms the
geometrical entities attached to the dof of $\bcdisc$ since $X_{\bcdisc,0}=\R^{{\bcI}\cap \Ii}\times \{0\}^{\Ib}$.
Let $\partition$ be the partition corresponding to $\disc$, and let $U\in\partition$. 
Take $v\in X_{\bcdisc,0}$ and let $V\in X_{\disc,0}$ be defined
by \eqref{bc:extv}. We notice that, for any $U\in\partition$, the values $(V_i)_{i\in I_U}$
are computed in terms of $(v_i)_{i\in \bcIU}$ with
$\bcIU=(I_U\cap {\bcI}) \cup \bigcup_{i\in I_U\backslash {\bcI}}H_i$.

We have, for $\x\in U$,
\[
\Pi_\bcdisc v(\x)=\Pi_\disc V(\x)=\sum_{i\in I_U}\alpha_i(\x) V_i
=\sum_{i\in I_U\cap {\bcI}}\alpha_i(\x) v_i
+\sum_{i\in I_U\backslash {\bcI}}\alpha_i(\x)\sum_{{j}\in H_i}\beta_{j}^i  v_{j}
=\sum_{i\in \bcIU} \widetilde{\alpha}_{i}(\x) v_i
\]
with 
\[
\begin{array}{ll}
\dsp\widetilde{\alpha}_{i}(\x)=\alpha_{i}(\x)+\sum_{k\in I_U\backslash {\bcI}\,|\,i\in H_k} \beta_{i}^{k} \alpha_{k}(\x)&\mbox{ if $i\in I_U\cap {\bcI}$},\\
\dsp \widetilde{\alpha}_{i}(\x)=\sum_{k\in I_U\backslash {\bcI}\,|\,i\in H_k} \beta_{i}^{k} \alpha_{k}(\x)
&\mbox{ if $i\in \bcIU\backslash I_U$}.
\end{array}
\]
Thanks to \eqref{barcomb} and \eqref{ll:piD} we have
\begin{multline}
\sum_{i\in \bcIU}\widetilde{\alpha}_{i}(\x)=\sum_{i\in I_U\cap {\bcI}}\alpha_i(\x)
+\sum_{i\in \bcIU\;}\sum_{k\in I_U\backslash {\bcI}\,|\,i\in H_{k}} \beta_{i}^{k}
\alpha_{k}(\x)\\
=\sum_{i\in I_U\cap {\bcI}}\alpha_i(\x)
+\sum_{{k}\in I_U\backslash {\bcI}}\alpha_{k}(\x)\sum_{i\in H_{k}} \beta_{i}^{k}
=\sum_{i\in I_U\cap {\bcI}}\alpha_i(\x)
+\sum_{{k}\in I_U\backslash {\bcI}}\alpha_{k}(\x)
=1.
\label{estloin}
\end{multline}
Hence, $\Pi_\bcdisc v$ has the required form.
The gradient $(\nabla_\bcdisc v)_{|U}=\gr_U ((V_i)_{i\in I_U})$
only depends on $(v_i)_{i\in \bcIU}$ and can thus be written
$\widetilde{\gr}_U ((v_i)_{i\in  \bcIU})$.
By \eqref{barcomb} the reconstruction $v\mapsto V$ is linearly
exact, that is if $v$ interpolates the values of an affine mapping $L$ at
the points $(\x_i)_{i\in \bcIU}$ then $V$ interpolates the same mapping $L$ at the
points $(\x_i)_{i\in I_U}$. Hence, the linear exactness of $\gr_U$
gives the linear exactness of $\widetilde{\gr}_U$. This completes the proof that
$\bcdisc$ is an LLE gradient discretisation.

Let us now establish the upper bound on $\regLLE(\bcdisc)$. 
For all $i\in I_U\backslash {\bcI}$ we have 
$|V_i|\le \sum_{{j}\in H_i}|\beta^i_{j}|\,|v_{j}|\le \regbc(\bcdisc)
\max_{j\in \bcIU}|v_{j}|$. This also holds for $i\in I_U\cap {\bcI}$ since
$\regbc(\bcdisc)\ge 1$. Hence, a.e. on $U$,
\[
\left|\widetilde{\gr}_U \left((v_i)_{i\in \bcIU}\right)\right|=
\left|\gr_U \left((V_i)_{i\in I_U}\right)\right|\le 
\frac{\Vert \gr_U\Vert_{\infty}\regbc(\bcdisc)}{{\rm diam}(U)}\max_{i\in \bcIU}|v_i|
\]
and thus
\be\label{stabcons}
\mbox{$\Vert\widetilde{\gr}_U\Vert_{\infty} \le \Vert \gr_U\Vert_{\infty}\regbc(\bcdisc)$}.
\ee
Reproducing the reasoning in the first two equalities in \eqref{estloin}
with absolute values and inequalities, we see that
\be\label{estalpha}
\sum_{i\in \bcIU}|\widetilde{\alpha}_{i}(\x)|
\le
\sum_{i\in I_U\cap {\bcI}}|\alpha_i(\x)|
+\sum_{{k}\in I_U\backslash {\bcI}}|\alpha_{k}(\x)|\sum_{i\in H_{k}} |\beta_{i}^{k}|
\le \regbc(\bcdisc)\sum_{i\in I_U}|\alpha_i(\x)|.
\ee
Finally, for ${j}\in \bcIU$ we
estimate $\frac{{\rm dist}(\x_j ,U)}{{\rm diam}(U)}$ by studying
two cases. If ${j}\in I_U$ then ${\rm dist}(\x_j ,U)\le \regLLE(\disc){\rm diam}(U)$.
If ${j}\not\in I_U$ then there exists $i\in I_U\backslash {\bcI}$ such that
${j}\in H_i$, and thus
${\rm dist}(\x_j ,\x_i)\le \regbc(\bcdisc){\rm diam}(U)$; this
gives ${\rm dist}(\x_j ,U)\le (\regbc(\bcdisc)+\regLLE(\disc)){\rm diam}(U)$.
Combined with \eqref{stabcons} and \eqref{estalpha},
these estimates on ${\rm dist}(\x_j ,U)$ 
prove the bound on $\regLLE(\bcdisc)$ stated in the lemma. \end{proof}

Barycentric condensations of LLE gradient discretisations satisfy the same properties (coercivity, consistency, compactness, limit-conformity) as the original gradient discretisation.  
The coercivity, limit-conformity and compactness properties result from the fact that $X_{\bcdisc,0}$ is (roughly) a subspace of $X_{\disc,0}$, and the consistency is a consequence of Lemma \ref{lem:bcll} and Proposition
\ref{prop:LL}.

\begin{theorem}[Properties of barycentric condensations of gradient discretisations]\label{th:bcgd}
Let $(\disc_m)_{m\in\N}$ be a sequen\-ce of LLE gradient discretisations that is coercive, consistent,
limit-conforming and compact in the sense of the definitions in Section \ref{sec:deftools}.
Let $\partition_m$ be the finite partition of $\Omega$ corresponding to $\disc_m$. We assume that
$\max_{U\in\partition_m}{\rm diam}(U)\to 0$ as $m\to\infty$, and that
$(\regLLE(\disc_m))_{m\in\N}$ is bounded.
For any $m\in\N$ we take a barycentric condensation $\bcdiscm$ of
$\disc_m$ such that $(\regbc(\bcdiscm))_{m\in\N}$ is bounded.

Then $(\bcdiscm)_{m\in\N}$ is coercive, consistent, limit-conforming and compact.
\end{theorem}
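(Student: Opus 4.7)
The plan is to exploit the fact that each barycentric condensation $\bcdiscm$ is essentially a restriction of $\disc_m$ via the extension map $v\mapsto V$ defined in \eqref{bc:extv}, which satisfies the two key identities $\Pi_{\bcdiscm}v=\Pi_{\disc_m}V$ and $\nabla_{\bcdiscm}v=\nabla_{\disc_m}V$. First I would verify that $v\mapsto V$ is a linear injection of $X_{\bcdiscm,0}$ into $X_{\disc_m,0}$ (injectivity follows from $V_i=v_i$ for $i\in\bcI\cap\Ii$, and the fact that $\Ib\subset\bcI$ ensures $V\in X_{\disc_m,0}$). Consequently the image $\mathcal I_m:=\{V\,:\,v\in X_{\bcdiscm,0}\}$ is a linear subspace of $X_{\disc_m,0}$, and all discrete norms/reconstructions for $\bcdiscm$ can be read off from those of $\disc_m$ evaluated on $\mathcal I_m$.

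For coercivity, the identity $\Pi_{\bcdiscm}v=\Pi_{\disc_m}V$ together with $\nabla_{\bcdiscm}v=\nabla_{\disc_m}V$ immediately gives $C_{\bcdiscm}\le C_{\disc_m}$, so the uniform bound on $C_{\disc_m}$ transfers. For limit-conformity, for each $\bvarphi\in W^{\div,p'}(\O)$ the supremum defining $W_{\bcdiscm}(\bvarphi)$ is taken over $v\in X_{\bcdiscm,0}\setminus\{0\}$, but when rewritten in terms of $V\in\mathcal I_m\setminus\{0\}$ it is a supremum over a subset of $X_{\disc_m,0}\setminus\{0\}$; hence $W_{\bcdiscm}(\bvarphi)\le W_{\disc_m}(\bvarphi)\to 0$. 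For compactness, given any sequence $v_m\in X_{\bcdiscm,0}$ with $\Vert\nabla_{\bcdiscm}v_m\Vert_{L^p(\O)^d}$ bounded, the associated sequence $V_m\in X_{\disc_m,0}$ satisfies the same bound on its discrete gradient, so by compactness of $(\disc_m)$ the sequence $\Pi_{\disc_m}V_m=\Pi_{\bcdiscm}v_m$ is relatively compact in $L^p(\O)$.

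The consistency is the one property that does not follow from the subspace inclusion (since $W^{1,p}_0(\O)$ must be approximated by the smaller space $X_{\bcdiscm,0}$). Here I would invoke Lemma \ref{lem:bcll}: each $\bcdiscm$ is itself an LLE gradient discretisation on the same partition $\partition_m$, with
\[
\regLLE(\bcdiscm)\le \regbc(\bcdiscm)\regLLE(\disc_m)+\regbc(\bcdiscm)+\regLLE(\disc_m),
\]
which is uniformly bounded by assumption. Since $\max_{U\in\partition_m}{\rm diam}(U)\to 0$, Proposition \ref{prop:LL} directly yields consistency of $(\bcdiscm)_{m\in\N}$.

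The only step requiring genuine care is the limit-conformity (and compactness) argument: one must make sure that the extension $v\mapsto V$ really preserves both $\Pi$ and $\nabla_\disc$ and lands in $X_{\disc_m,0}$ with homogeneous boundary values, which is built into Definition \ref{def:bcgd} via $\Ib\subset\bcI$. Once this is clearly established, the three properties from the original sequence transfer to the condensed sequence essentially by restriction, and consistency is handled separately by the LLE machinery developed in Subsection \ref{sec:LL}.
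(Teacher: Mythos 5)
Your proposal is correct and follows essentially the same route as the paper: coercivity, limit-conformity and compactness are transferred by viewing $v\mapsto V$ as an embedding of $X_{\bcdiscm,0}$ into $X_{\disc_m,0}$ that preserves both $\Pi$ and $\nabla$, yielding $C_{\bcdiscm}\le C_{\disc_m}$ and $W_{\bcdiscm}(\bvarphi)\le W_{\disc_m}(\bvarphi)$, while consistency is obtained separately from Lemma \ref{lem:bcll} and Proposition \ref{prop:LL} using the uniform bounds on $\regLLE(\disc_m)$ and $\regbc(\bcdiscm)$. No gaps.
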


\begin{proof} For any $v\in X_{\bcdiscm,0}$, with $V$ defined by \eqref{bc:extv} we have
\[
||\Pi_{\bcdiscm} v||_{L^p(\O)}=||\Pi_{\disc_m} V||_{L^p(\O)}
\le C_{\disc_m} ||\nabla_{\disc_m} V||_{L^p(\O)^d}=C_{\disc_m} ||\nabla_{\bcdiscm} v||_{L^p(\O)^d},
\]
which shows that $C_{\bcdiscm}\le C_{\disc_m}$ and thus that $(\bcdiscm)_{m\in\N}$
is coercive. To prove the compactness, we take $(\nabla_{\bcdiscm}v_m)_{m\in\N}=
(\nabla_{\disc_m}V_m)_{m\in\N}$ bounded in $L^p(\O)^d$, and we use the compactness of
$(\disc_m)_{m\in\N}$ to see that $(\Pi_{\disc_m}V_m)_{m\in\N}=(\Pi_{\bcdiscm}v_m)_{m\in\N}$
is relatively compact in $L^p(\O)$. The limit conformity follows by writing
\begin{multline*}
\frac{1}{||\nabla_\bcdiscm v||_{L^p(\O)^d}}
\left|\int_\O \left(\grad_\bcdiscm v(\x)\cdot\bvarphi(\x) + \Pi_\bcdiscm v(\x) \div\bvarphi(\x)\right)  \d\x\right|\\
=\frac{1}{||\nabla_{\disc_m} V||_{L^p(\O)^d}}
\left|\int_\O \left(\grad_{\disc_m} V(\x)\cdot\bvarphi(\x) + \Pi_{\disc_m} V(\x) \div\bvarphi(\x)\right)  \d\x\right|,
\end{multline*}
which shows that $W_{\bcdiscm}(\bvarphi)\le W_{\disc_m}(\bvarphi)$. Finally, 
by Lemma \ref{lem:bcll} each $\bcdiscm$ is an LLE gradient discretisation
and the boundedness of $(\regLLE(\disc_m))_{m\in\N}$ and $(\regbc(\bcdiscm))_{m\in\N}$
show that $(\regLLE(\bcdiscm))_{m\in\N}$ is bounded.
Proposition \ref{prop:LL} then gives the consistency of $(\bcdiscm)_{m\in\N}$. \end{proof}

\subsection{Mass lumping and comparison of reconstruction operators}\label{sec:masslump}

``Mass-lumping'' is the generic name of the process applied to modify schemes that do not have a built-in piecewise constant reconstruction, say for instance the $\P_1$ finite element scheme (see Section \ref{sec:galerkin}). 
This is often done on a case-by-case basis, with \emph{ad hoc} studies.
The gradient scheme framework provides an efficient generic setting for performing this mass-lumping.
The idea is to modify the reconstruction operator so that it  becomes a piecewise constant reconstruction; under an assumption that is easy to verify
in practice, this ``mass-lumped'' gradient discretisation can be compared
with the original gradient discretisation, which ensures that all properties
required for the convergence of the mass-lumped scheme are satisfied.

\begin{definition}[Mass-lumped gradient discretisation]\label{def:ml}
Let $\disc=(X_{\disc,0},\Pi_\disc,\nabla_\disc)$ be a gradient discretisation
in the sense of Definition \ref{defgraddisc}. A mass-lumped version of $\disc$ 
is a gradient discretisation $\disc^\ml=(X_{\disc,0},\Pi_\disc^\ml,\nabla_\disc)$ such that  $\Pi_\disc^\ml$ is a piecewise constant reconstruction in the sense of Definition 
\ref{def:pwrec}.
\end{definition}

% \begin{remark}[Mass lumping with respect to a non canonical basis]
% The basis $(e_i)_{i\in I}$ of $X_{\disc,0}$ is usually chosen in a canonical way, each vector in this basis corresponding to a natural degree of freedom of $\disc$ (see examples in the $\P_1$ and non-conforming $\P_1$ case in Section \ref{sec:galerkin}).
% Mass-lumping could be done with respect to a non-standard basis, but this
% might lead to additional numerical cost if the computation of $\nabla_\disc$
% in this non-standard basis is complex; the scheme  implementation might require to
% perform changes of basis, possibly with full transition matrices, to compute
% $\Pi_{\disc}^\ml$ and $\nabla_\disc$.
% \end{remark}
In all the cases of mass-lumping considered in this paper, we show that the following theorem applies to $\disc_m^\star = \disc_m^\ml$. This theorem states that, if two
sequences of gradient discretisations share the same space and reconstructed gradients,
one inherits the properties from the other provided that their reconstruction operators
are close to each other (condition \eqref{est:ml}). Moreover, it also establishes that the sufficient condition \eqref{est:ml} is also necessary for the mass-lumped schemes to  satisfy the compactness and limit-conformity properties, since these properties are satisfied by all the considered initial schemes.

\begin{theorem}[Comparison of reconstruction operators]\label{th:ml}
Let $(\disc_m)_{m\in\N}$ be a sequence of gradient discretisations in the sense
of Definition \ref{defgraddisc}.
For any $m\in\N$, let $\disc_m^\star$ be a gradient discretisation defined from $\disc_m$ by $\disc_m^\star = (X_{\disc_m,0},\Pi_{\disc_m}^\star,\grad_{\disc_m}) $, where $\Pi_{\disc_m}^\star$ is a linear operator from $X_{\disc_m,0}$ to $L^p(\Omega)$.
\begin{enumerate}
\item We assume that there exists a sequence $(\omega_m)_{m\in\N}$ such that
\be\label{est:ml}
\begin{array}{l}
\lim_{m\to\infty}\omega_m=0,\mbox{ and }\\
\dsp\forall m\in\N\,,\;\forall v\in X_{\disc_m,0}\,,\;||\Pi_{\disc_m}^\star v -\Pi_{\disc_m}v||_{L^p(\O)}\le \omega_m ||\nabla_{\disc_m}v||_{L^p(\O)^d}.
\end{array}
\ee
If $(\disc_m)_{m\in\N}$ is coercive (resp. consistent, limit-conforming, or
compact -- in the sense of the definitions in Section \ref{sec:deftools}), 
then $(\disc_m^\star)_{m\in\N}$ is
also coercive (resp. consistent, limit-conforming,
or compact).
\item
Reciprocally, if $(\disc_m)_{m\in\N}$ and  $(\disc_m^\star)_{m\in\N}$ are both compact and limit-conforming in  the sense of the definitions in Section \ref{sec:deftools}, then 
there exists $(\omega_m)_{m\in\N}$ such that \eqref{est:ml} holds.
\end{enumerate}
\end{theorem}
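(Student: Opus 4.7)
The two parts are of different natures. Part (1) reduces to four short triangle-inequality arguments, using that $\disc_m^\star$ and $\disc_m$ share the same discrete gradient, so every quantity controlled by $\Vert \nabla_{\disc_m}\cdot\Vert_{L^p(\O)^d}$ is corrected by only the $\omega_m$-small $L^p$-difference of reconstructions. Part (2) is more delicate: it is essentially a compactness--limit-conformity argument that identifies two \emph{a priori} distinct $L^p$ limits as the same element of $W^{1,p}_0(\O)$.

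\textbf{Direct implications.} For coercivity, $\Vert \Pi_{\disc_m}^\star v\Vert_{L^p(\O)} \le \Vert \Pi_{\disc_m} v\Vert_{L^p(\O)} + \omega_m \Vert \nabla_{\disc_m} v\Vert_{L^p(\O)^d}$ yields $C_{\disc_m^\star} \le C_{\disc_m} + \omega_m$. For consistency, given $\varphi \in W^{1,p}_0(\O)$ I take an approximate minimizer $u_m$ of the expression defining $S_{\disc_m}(\varphi)$, insert it into the definition of $S_{\disc_m^\star}(\varphi)$, and use $\Vert \nabla_{\disc_m} u_m\Vert_{L^p(\O)^d} \le \Vert \nabla\varphi\Vert_{L^p(\O)^d} + S_{\disc_m}(\varphi)$ together with \eqref{est:ml} to obtain $S_{\disc_m^\star}(\varphi) \le S_{\disc_m}(\varphi) + \omega_m(\Vert \nabla\varphi\Vert_{L^p(\O)^d} + S_{\disc_m}(\varphi)) \to 0$. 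For limit-conformity, decomposing $\Pi_{\disc_m}^\star u = \Pi_{\disc_m} u + (\Pi_{\disc_m}^\star u - \Pi_{\disc_m} u)$ inside the integrand of $W_{\disc_m^\star}(\bvarphi)$ gives $W_{\disc_m^\star}(\bvarphi) \le W_{\disc_m}(\bvarphi) + \omega_m \Vert \div\bvarphi\Vert_{L^{p'}(\O)}$. For compactness, $\Pi_{\disc_m}^\star u_m$ is the sum of the relatively compact $\Pi_{\disc_m} u_m$ and a term tending to zero in $L^p(\O)$, hence remains relatively compact.

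\textbf{Reciprocal and main obstacle.} I set $\omega_m$ equal to the operator norm of $\Pi_{\disc_m}^\star - \Pi_{\disc_m}$ from $(X_{\disc_m,0}, \Vert \nabla_{\disc_m}\cdot\Vert_{L^p(\O)^d})$ into $L^p(\O)$, so that \eqref{est:ml} holds by construction; the task is then to show $\omega_m\to 0$. Arguing by contradiction, assume a subsequence satisfies $\omega_m \ge 2\epsilon > 0$ and choose $v_m \in X_{\disc_m,0}$ with $\Vert \nabla_{\disc_m} v_m\Vert_{L^p(\O)^d} = 1$ and $\Vert \Pi_{\disc_m}^\star v_m - \Pi_{\disc_m} v_m\Vert_{L^p(\O)} \ge \epsilon$. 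By compactness of both sequences, extract further subsequences so that $\Pi_{\disc_m} v_m \to u$ and $\Pi_{\disc_m}^\star v_m \to u^\star$ in $L^p(\O)$, with $\Vert u - u^\star\Vert_{L^p(\O)} \ge \epsilon$; by $L^p$-reflexivity ($p \in (1,\infty)$) I further arrange $\nabla_{\disc_m} v_m \rightharpoonup G$ weakly in $L^p(\O)^d$. The key (and hardest) step is to pin down $u$ and $u^\star$: passing to the limit in $|\int_\O \nabla_{\disc_m} v_m\cdot\bvarphi + \Pi_{\disc_m} v_m \div\bvarphi \,\d\x| \le W_{\disc_m}(\bvarphi)$ for any $\bvarphi\in W^{\div,p'}(\O)$ yields $\int_\O G\cdot\bvarphi + u\,\div\bvarphi\,\d\x = 0$, and the same with $u^\star$. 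A standard Stokes-type argument (extend $u,u^\star$ by $0$ to $\R^d$, test with $\bvarphi\in C^\infty_c(\R^d)^d$, then with $\bvarphi$ having nontrivial normal trace on $\partial\O$) then identifies $u,u^\star\in W^{1,p}_0(\O)$ with $\nabla u = \nabla u^\star = G$, and Poincar\'e's inequality forces $u=u^\star$, contradicting $\Vert u-u^\star\Vert_{L^p(\O)}\ge\epsilon$. The subtlety is precisely this identification: it is the combination of limit-conformity on both sides and the common weak gradient $G$ that collapses the two limits, and this is what makes the pair of properties (compactness + limit-conformity) not only sufficient but also necessary for \eqref{est:ml}.
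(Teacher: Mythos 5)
Your proposal is correct and follows essentially the same route as the paper's proof: triangle-inequality estimates for the four direct implications (the paper's consistency bound $S_{\disc_m^\star}(\varphi)\le \omega_m\Vert\nabla\varphi\Vert_{L^p(\O)^d}+(1+M)S_{\disc_m}(\varphi)$ is the same as yours up to the choice of constant), and for the converse the identical contradiction argument: normalise $v_m$, extract strong $L^p$ limits $u,u^\star$ by compactness and a weak limit $G$ of the gradients, pass to the limit in the limit-conformity bounds after zero-extension to identify $u,u^\star\in W^{1,p}_0(\O)$ with $\nabla u=\nabla u^\star=G$, and conclude $u=u^\star$ by Poincar\'e. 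No gaps.
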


\begin{proof}
Let us prove the first item of the theorem.
We let $M = \sup_{m\in\N} \omega_m$, and we use the triangular inequality to
write, from \eqref{est:ml}, for any $v\in X_{\disc_m,0}$,
\be\label{comp:dg33}
||\Pi_{\disc_m}^\star v||_{L^p(\O)}\le
||\Pi_{\disc_m}^\star v-\Pi_{\disc_m}v||_{L^p(\O)}+||\Pi_{\disc_m}v||_{L^p(\O)}
\le M||\nabla_{\disc_m}v||_{L^p(\O)^d}+||\Pi_{\disc_m}v||_{L^p(\O)}.
\ee

\textsc{Coercivity}: let us assume that $(\disc_m)_{m\in\N}$ is coercive with constant $C_P$. Using \eqref{comp:dg33} we find
$||\Pi_{\disc_m}^\star v||_{L^p(\O)}\le (M+C_P)||\nabla_{\disc_m}v||_{L^p(\O)^d}$ and
the coercivity of $(\disc_m^\star)_{m\in\N}$ follows.

\textsc{Consistency}: let us assume that $(\disc_m)_{m\in\N}$ is consistent. 
Using the triangular inequality and \eqref{est:ml}, we write, for any $v\in X_{\disc_m,0}$ and  $\varphi\in W^{1,p}_0(\O)$,
\begin{align*}
 S_{\disc_m^\star}(\varphi) &\le ||\Pi_{\disc_m^\star }v-\varphi||_{L^p(\O)}
  +||\nabla_{\disc_m}v- \nabla\varphi||_{L^p(\O)^d}\\
    &\le 
  \omega_m||\nabla_{\disc_m}v||_{L^p(\O)^d}
  +||\Pi_{\disc_m} v-\varphi||_{L^p(\O)}
  +||\nabla_{\disc_m} v- \nabla\varphi||_{L^p(\O)^d}\\
  &\le \omega_m||\nabla\varphi||_{L^p(\O)^d}
+||\Pi_{\disc_m} v-\varphi||_{L^p(\O)}
+(1+\omega_m)||\nabla_{\disc_m} v- \nabla\varphi||_{L^p(\O)^d}\\
 &\le \omega_m||\nabla\varphi||_{L^p(\O)^d}
+(1+M)(||\Pi_{\disc_m} v-\varphi||_{L^p(\O)}+||\nabla_{\disc_m} v- \nabla\varphi||_{L^p(\O)^d}).
\end{align*}
Hence $S_{\disc_m^\star}(\varphi)\le \omega_m||\nabla\varphi||_{L^p(\O)^d}
+(1+M)S_{\disc_m}(\varphi)$ and the consistency of $(\disc_m^\star)_{m\in\N}$ follows
from the consistency of $(\disc_m)_{m\in\N}$ and from $\lim_{m\to\infty}\omega_m= 0$.

\textsc{Limit-conformity}: let us now assume that $(\disc_m)_{m\in\N}$ is limit-conforming. 
By the triangular inequality and \eqref{est:ml}, for any $\bvarphi\in W^{\div,p'}(\O)$,
\begin{align*}
\Bigg|\int_\O \Big(\grad_{\disc_m} &v(\x)\cdot\bvarphi(\x) + \Pi_{\disc_m}^\star v(\x) \div\bvarphi(\x)\Big)  \d\x\Bigg|\\
\le{}& ||\div\bvarphi||_{L^{p'}(\O)}\omega_m||\nabla_{\disc_m}v||_{L^p(\O)^d}
+\left|\int_\O \left(\grad_{\disc_m} v(\x)\cdot\bvarphi(\x) + \Pi_{\disc_m} v(\x) \div\bvarphi(\x)\right)  \d\x\right|.
\end{align*}
Using \eqref{est:ml}, we infer that
$W_{\disc_m^\star}(\bvarphi)\le \omega_m ||\div\bvarphi||_{{p'}(\O)^d} 
+W_{\disc_m}(\bvarphi)\to 0$ as $m\to\infty$,
and the limit conformity of $(\disc_m^\star)_{m\in\N}$ is established.

\textsc{Compactness}: we now assume that $(\disc_m)_{m\in\N}$ is compact.
If $(\nabla_{\disc_m}v_m)_{m\in\N}$ is bounded in $L^p(\O)^d$, then the compactness of $(\disc_m)_{m\in\N}$ ensures
that $(\Pi_{\disc_m}v_m)_{m\in\N}$ is relatively compact in $L^p(\O)$. Since
$||\Pi_{\disc_m}^\star v_m-\Pi_{\disc_m}v_m||_{L^p(\O)}\to 0$ as $m\to\infty$
by \eqref{est:ml}, we deduce that $(\Pi_{\disc_m}^\star v_m)_{m\in\N}$ is relatively compact in $L^p(\O)$.

\smallskip

Let us now turn to the proof, by way of contradiction, of the second item.
We therefore assume that  $(\disc_m)_{m\in\N}$ and  $(\disc_m^\star)_{m\in\N}$ are both compact and limit-conforming, and that
\be
\omega_m := \max_{v\in X_{\disc_m,0}\backslash\{0\}}\frac{\Vert \Pi_{\disc_m}v-\Pi_{\disc_m}^\star v\Vert_{L^p(\O)}}{\Vert \nabla_{\disc_m}v\Vert_{L^p(\O)^d}}\centernot\longrightarrow 0\mbox{ as $m\to\infty$}.
\label{def:omegam}\ee
Then we can find $\varepsilon_0 >0$, a subsequence of $(\disc_m,\disc_m^\star)_{m\in\N}$
(not denoted differently) and for each $m\in\N$ an element $v_m\in X_{\disc_m,0}\backslash\{0\}$
such that $||\Pi_{\disc_m}^\star v_m -\Pi_{\disc_m}v_m||_{L^p(\O)}\ge \varepsilon_0 ||\nabla_{\disc_m}v_m||_{L^p(\O)^d}$. Since $v_m\neq 0$, we can consider $\widetilde{v}_m=\frac{v_m}{||\nabla_{\disc_m}v_m||_{L^p(\O)^d}}$, which satisfies $||\nabla_{\disc_m} \widetilde v_m||_{L^p(\O)^d} = 1$ and
\be\label{gtr.eps0}
||\Pi_{\disc_m}^\star \widetilde v_m -\Pi_{\disc_m} \widetilde v_m||_{L^p(\O)}\ge  \varepsilon_0.
\ee
We extract another subsequence such that $\nabla_{\disc_m} \widetilde v_m$ weakly converges to some $G$ in $L^p(\O)^d$, and, using the compactness of $(\disc_m)_{m\in\N}$ and $(\disc_m^\star)_{m\in\N}$, $\Pi_{\disc_m}\widetilde v_m\to v$ in $ L^p(\O)$ and $\Pi_{\disc_m}^\star\widetilde v_m\to
v^\star$ in $L^p(\O)$. Passing to the limit in \eqref{gtr.eps0} we find $||v - v^\star||_{L^p(\O)}\ge \varepsilon_0 $. Extending the functions $\nabla_{\disc_m} \widetilde v_m$,  $\Pi_{\disc_m}\widetilde v_m$ and $\Pi_{\disc_m}^\star\widetilde v_m$ by $0$ outside $\Omega$, we see that, for any $\bvarphi\in W^{\div,p'}(\R^d)$,
\[
\left|\int_{\R^d} \left(\grad_{\disc_m} \widetilde v_m(\x)\cdot\bvarphi(\x) + \Pi_{\disc_m}^\star \widetilde v_m(\x) \div\bvarphi(\x)\right)  \d\x\right|\le W_{\disc_m^\star}(\bvarphi|_\Omega),
\]
and
\[
\left|\int_{\R^d} \left(\grad_{\disc_m} \widetilde v_m(\x)\cdot\bvarphi(\x) + \Pi_{\disc_m} \widetilde v_m(\x) \div\bvarphi(\x)\right)  \d\x\right|\le W_{\disc_m}(\bvarphi|_\Omega).
\]
By limit-conformity of both sequences of gradient discretisations, we can let $m\to\infty$ and
we find 
\[
\int_{\R^d} \left(G\cdot\bvarphi(\x) +  v^\star(\x) \div\bvarphi(\x)\right)  \d\x=
\int_{\R^d} \left(G\cdot\bvarphi(\x) +  v(\x) \div\bvarphi(\x)\right)  \d\x= 0.
\]
This proves that $v, v^\star\in W^{1,p}_0(\Omega)$ and that $G = \grad v = \grad v^\star$.
Poincar\'e's inequality then gives $v = v^\star$, which contradicts $||v - v^\star||_{L^p(\O)}\ge \varepsilon_0 $. Therefore the sequence $(\omega_m)_{m\in\N}$ defined by \eqref{def:omegam} satisfies \eqref{est:ml}.
\end{proof}

\subsection{Polytopal meshes and discrete functional analysis}\label{sec:dfa}

Although gradient discretisations are not limited to mesh-based methods (for example it is easy to include spectral methods in this framework), a large number of schemes
for \eqref{pblin} are built on meshes. 

\begin{definition}[Polytopal mesh]\label{def:polymesh}~
Let $\Omega$ be a bounded polytopal open subset of $\R^d$ ($d\ge 1$). 
A polytopal mesh of $\O$ is given by $\polyd = (\mesh,\edges,\centers,\vertices)$, where:
\begin{enumerate}
\item $\mesh$ is a finite family of non empty connected polytopal open disjoint subsets of $\O$ (the cells) such that $\overline{\O}= \dsp{\cup_{K \in \mesh} \overline{K}}$.
For any $K\in\mesh$, $\mcv>0$ is the measure of $K$ and $h_K$ denotes the diameter of $K$.

\item $\edges$ is a finite family of disjoint subsets of $\overline{\O}$ (the edges of the mesh in 2D,
the faces in 3D), such that any $\edge\in\edges$ is a non empty open subset of a hyperplane of $\R^d$ and $\edge\subset \overline{\O}$.
We assume that for all $K \in \mesh$ there exists  a subset $\edgescv$ of $\edges$
such that $\dr K  = \dsp{\cup_{\edge \in \edgescv}} \overline{\edge}$. 
We then denote by $\mesh_\edge = \{K\in\mesh\,:\,\edge\in\edgescv\}$.
We then assume that, for all $\edge\in\edges$, $\mesh_\edge$ has exactly one element
and $\edge\subset\partial\O$, or $\mesh_\edge$ has two elements and
$\edge\subset\O$. 
We let $\edgesint$ be the set of all interior faces, i.e. $\edge\in\edges$ such that $\edge\subset \O$, and $\edgesext$ the set of boundary
faces, i.e. $\edge\in\edges$ such that $\edge\subset \dr\O$.
For $\edge\in\edges$, the $(d-1)$-dimensional measure of $\edge$ is $\medge$,
the centre of gravity of $\edge$ is $\centeredge$

\item $\centers = (\xcv)_{K \in \mesh}$ is a family of points of $\O$ indexed by $\mesh$ and such that, for all  $K\in\mesh$,  $\xcv\in K$ ($\xcv$ is sometimes called the ``centre'' of $\cv$). 
We then assume that all cells $K\in\mesh$ are  strictly $\xcv$-star-shaped, meaning that 
if $\x$ is in the closure of $K$ then the line segment $[\xcv,\x)$ is included in $K$.

\item $\mathcal V$ is the set of vertices of the mesh. The vertices that belong to
$\overline{K}$, for $K\in\mesh$, are gathered in $\vertices_K$; the set of vertices of
$\edge\in\edges$ is denoted by $\vertices_\edge$.

\end{enumerate}
For all $K \in \mesh$ and for any $\edge \in \edgescv$, we denote by
 $\ncvedge$ the (constant) unit vector normal to $\edge$ outward to $K$.
We also let $\dcvedge$ be the signed orthogonal
distance between $\xcv$ and $\edge$ (see left part of Fig. \ref{fig.dksigma}), that is: 
\begin{equation}
 \dcvedge = (\x - \xcv) \cdot \ncvedge\,,\quad\forall \x \in \edge \label{def.dcvedge}
\end{equation}
(note that $(\x - \xcv) \cdot \ncvedge$ is constant for $\x \in \edge$).
The fact that $K$ is strictly star-shaped with respect to $\xcv$ is equivalent
to $\dcvedge> 0$ for all $\edge\in\edgescv$.
For all $K\in\mesh$ and $\edge\in\edgescv$, we denote by $D_{K,\edge}$ the cone with apex $\xcv$ and base $\edge$, that is $D_{K,\edge}=\{ t \xcv +(1-t) \y\,:\, t\in (0,1),\,
\y\in \edge\}$.
The diamond associated to a face $\edge\in\edges$ is $D_\edge = \bigcup_{K\in\mesh_\edge} D_{K,\edge}$.

The size of the discretisation is $\sizemesh=\sup\{h_K\,:\; K\in \mesh\}$ and the
regularity factor $\theta_\polyd $ is
\be\label{def:theta}
\theta_\polyd = \max\left\{\frac{h_K}{d_{K,\sigma}}+\frac{|K|}{|D_{K,\edge}|}\,:\,K\in\mesh\,,\;\sigma\in\edgescv\right\}
+\max\left\{\frac{d_{K,\edge}}{d_{L,\edge}}\,:\,\sigma\in\edgesint\,,\;\mesh_\sigma=\{K,L\}\right\}.
\ee
\end{definition}

\begin{figure}[htb]
\begin{center}
\resizebox{.3\textwidth}{!}{\input{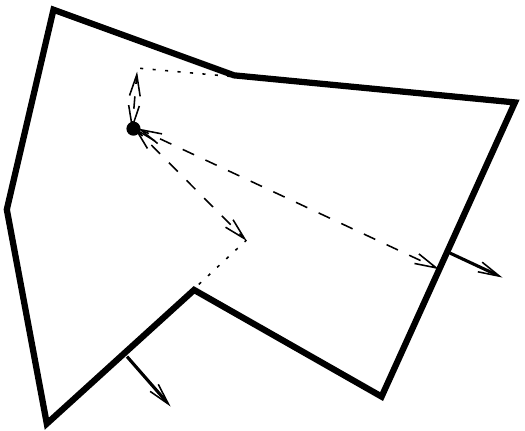_t}}\hspace*{.2\textwidth}\resizebox{.3\textwidth}{!}{\includegraphics{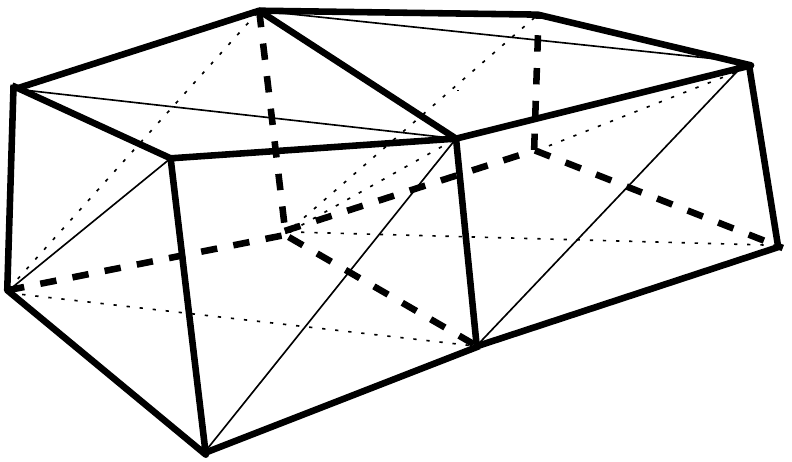}}
\caption{A cell $K$ of a polytopal mesh (left). Two neighbouring generalised hexahedra (right).}
\label{fig.dksigma}
\end{center}
\end{figure}

\begin{remark}[Generalised hexahedra] This definition covers a wide variety of meshes, including those with non-convex cells and cells sharing more than one face;
in particular, ``generalised hexahedra'' with non planar faces can be handled;
such cells have 12 faces (if each non planar face is split in two triangles), but only 6 neighbouring cells. See right of Fig. \ref{fig.dksigma}.
\end{remark}

\begin{remark} Since $\min_{\sigma \in\edgescv}d_{K,\edge}$ is smaller than the radius
of the largest ball centred at $\x_K$ and contained in $K$, an upper bound on
$\theta_\polyd$ imposes that the interior and exterior diameters of each
cell are comparable.
\end{remark}

We now introduce a ``polytopal toolbox'', used in the statement of discrete functional analysis results.

\begin{definition}[Polytopal toolbox] \label{def:polydis} 
Let $\Omega$ be a bounded polytopal open subset of $\R^d$ ($d\ge 1$) and let $\polyd$ be a polytopal mesh in the sense of Definition \ref{def:polymesh}. 
The quadruplet $(X_{\polyd,0},\Pi_\polyd,\nabla_\polyd,\Vert \cdot \Vert_{\polyd,0,p})$ is a polytopal toolbox if:
\begin{enumerate}
 \item the set $X_{\polyd,0}$ is the vector space of degrees of freedom attached to cells and edges (with homogeneous Dirichlet boundary conditions):
\begin{align}
% \label{dfa:spaceM}
% &X_\mesh=\{v=(v_K)_{K\in\mesh}\,:\, v_K\in\R\},\\
\label{dfa:spaceH}
&X_{\polyd,0}=\{v=((v_K)_{K\in\mesh},(v_\sigma)_{\sigma\in\edges})\,:\, v_K\in\R\,,\;
v_\sigma\in\R\,,\,v_\sigma=0\mbox{ if }\sigma\in\edgesext\}.
\end{align}
\item The mapping $\Pi_\polyd:X_{\polyd,0} \to L^\infty(\O)$ is defined by
\be\label{def:pipolyd}
\forall v\in X_{\polyd,0},\;\forall K\in \mesh,\;
\Pi_\polyd v=v_K\mbox{ on $K$}.
\ee
\item The discrete gradient $\nabla_\polyd: X_{\polyd,0}\mapsto L^p(\O)^d$ is defined by
\be\label{def:nablapolyd}
\forall K\in \mesh\,,\;\nabla_\polyd v=\frac{1}{|K|}\sum_{\edge\in\edgescv}|\sigma|(v_\sigma-v_K)\n_{K,\edge}=\frac{1}{|K|}\sum_{\sigma\in\edgescv}|\edge|v_\sigma\n_{K,\edge} \mbox{ on $K$}
\ee
(the second equality follows from the property $\sum_{\edge\in\edgescv}|\edge|\n_{K,\edge}=0$,
a consequence of Stokes' formula).
\item The space $X_{\polyd,0}$
is endowed with the following discrete $W^{1,p}_0$ norm,
for some $p\in (1,\infty)$:
\begin{align}
% \label{def:seminormM}
% \forall v\in X_\mesh\,:&\,||v||_{X_\mesh,p}^p=\sum_{\edge\in\edges}
% |\edge|d_\edge\left|\frac{v_K-v_L}{d_\edge}\right|^p,\\
\label{def:seminormD}
\forall v\in X_{\polyd,0}\,,\;
||v||_{\polyd,0,p}^p=\sum_{K\in\mesh}\sum_{\edge\in\edgescv}
|\edge|d_{K,\edge}\left|\frac{v_\edge-v_K}{d_{K,\edge}}\right|^p.
\end{align}
\end{enumerate}
In the sequel, $\polyd$ refers to both the polytopal mesh and to the quadruplet $(X_{\polyd,0},\Pi_\polyd,\nabla_\polyd,\Vert \cdot \Vert_{\polyd,0,p})$.
\end{definition}
The discrete gradient $\nabla_\polyd$ satisfies, thanks to H\"older's inequality and to $\sum_{\sigma\in\edgescv}|\edge|d_{K,\edge} = d|K|$,
\be\label{eq:majnordisc}
\Vert \nabla_\polyd v\Vert_{L^p(\O)^d} \le d^{\frac{p-1}{p}}||v||_{\polyd,0,p}.
\ee

\begin{remark} Note that a polytopal toolbox is not a gradient discretisation, since
$\Vert\nabla_\polyd \cdot\Vert_{L^p(\O)^d}$ is not a norm on $X_{\polyd,0}$ (consider $v\in X_{\polyd,0}$ such that $v_\edge=0$ for all $\edge\in\edges$ but $v_K\not=0$
for some $K\in\mesh$).
\end{remark}

The following lemmas, whose proof can be found in \cite{sushi,koala}, are used to
establish Proposition \ref{prop:inhdfa} below.

\begin{lemma}[Discrete Poincar\'e inequality]\label{dfa:discP}
Let $\polyd$ be a polytopal toolbox of $\O$ in the sense of Definition \ref{def:polydis}, and let $\theta\ge \theta_\polyd$.
There exists $\ctel{dfa:P}$ only depending on $\Omega$, $\theta$ and $p$ such that for
all $v\in X_{\polyd,0}$ we have $||\Pi_\polyd v||_{L^p(\O)}
\le \cter{dfa:P}||v||_{\polyd,0,p}$.
\end{lemma}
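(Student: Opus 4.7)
\smallskip

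\noindent\textbf{Proof plan.}
The plan is to adapt the classical proof of the Poincar\'e inequality (based on the fundamental theorem of calculus along lines) to the polytopal discrete setting, following the line of argument used in \cite{sushi,koala}.

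First, I would extend $\Pi_\polyd v$ by zero outside $\O$, which is consistent with the boundary condition $v_\sigma=0$ for $\sigma\in\edgesext$. Fix a unit vector $\bm{e}\in\R^d$. For a.e.\ $x\in\O$ with $x\in K$, the ray $\{x+t\bm{e}\,:\, t\ge 0\}$ leaves $\O$ after crossing a finite sequence of faces $\sigma_0,\ldots,\sigma_N$ separating cells $K=K_0,K_1,\ldots,K_N$ (with the convention $v_{K_{N+1}}=0$ for the outside). Telescoping and splitting each jump through the face value gives
\[
-v_K \;=\; \sum_{j=0}^N (v_{K_{j+1}}-v_{K_j}) \;=\; \sum_{j=0}^N \bigl[(v_{K_{j+1}}-v_{\sigma_j})-(v_{K_j}-v_{\sigma_j})\bigr],
\]
where the boundary contribution at $\sigma_N$ uses $v_{\sigma_N}=0$.

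Second, raising to the $p$-th power and applying H\"older's inequality against weights of the form $|\sigma||\bm{e}\cdot\n_{K,\sigma}|\,d_{K,\sigma}^{p-1}$, I would bound $|v_K|^p$ by a weighted sum of $|(v_\sigma-v_K)/d_{K,\sigma}|^p$ over the crossed faces. Integrating over $x\in\O$, the geometric identity
\[
\int_\O \carac_{\{x\,:\,\sigma \text{ is crossed}\}}(x)\,\d x \;\le\; {\rm diam}(\O)\,|\sigma|\,|\bm{e}\cdot\n_{K,\sigma}|
\]
(which expresses $|\sigma||\bm{e}\cdot\n_{K,\sigma}|$ as the area of $\sigma$ projected orthogonally to $\bm{e}$) converts the line sum into a sum over mesh faces carrying precisely the weights $|\sigma|\,d_{K,\sigma}$ that appear in $\Vert v\Vert_{\polyd,0,p}^p$. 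Averaging over $\bm{e}$ on the unit sphere then removes the directional factor up to a constant depending only on~$d$.

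The hard part will be the geometric book-keeping. One must control (i) the number of face crossings along a typical ray, using the bound $h_K/d_{K,\sigma}\le\theta$; (ii) the switching between the two sides of an interior face, where the ratio $d_{K,\sigma}/d_{L,\sigma}\le\theta$ ensures that $|(v_\sigma-v_K)/d_{K,\sigma}|^p$ and $|(v_\sigma-v_L)/d_{L,\sigma}|^p$ are uniformly comparable; and (iii) the length of the integration ray through its dependence on ${\rm diam}(\O)$. Keeping track of these factors shows that the resulting constant $\cter{dfa:P}$ depends only on $\O$, $\theta$ and $p$, as claimed.
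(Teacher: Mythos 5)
You should first note that the paper does not actually prove this lemma: it is imported from \cite{sushi,koala}, and the argument there is quite different from the one you propose. In those references one observes that $\Pi_\polyd v$, extended by $0$ outside $\O$, is a piecewise constant function of bounded variation whose total variation is $\sum_{\edge\in\edges}\medge$ times the jump of $\Pi_\polyd v$ across $\edge$; splitting each jump through the face unknown (with $v_\edge=0$ on boundary faces) bounds this by $\sum_{K\in\mesh}\sum_{\edge\in\edgescv}\medge\,|v_\edge-v_K|$. The embedding $BV(\R^d)\hookrightarrow L^{d/(d-1)}(\R^d)$ and a single application of H\"older's inequality, using $\sum_{\edge\in\edgescv}\medge d_{K,\edge}=d|K|$, then give $\Vert\Pi_\polyd v\Vert_{L^{d/(d-1)}(\O)}\le C_d\,(d|\O|)^{(p-1)/p}\,\Vert v\Vert_{\polyd,0,p}$, and the exponent is raised to $p$ (when $p>d/(d-1)$) by applying the same estimate to $|\Pi_\polyd v|^\alpha$ and bootstrapping. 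No ray geometry is needed at all.

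Your directional argument is the classical one for two-point admissible meshes, but its crux fails on general polytopal meshes, and that crux is precisely the step you defer to ``geometric book-keeping''. After H\"older, the second factor is a weighted sum over the faces crossed by the ray; the best achievable form is $\bigl(\sum_j d_{K_j,\edge_j}|\mathbi{e}\cdot\n_{K_j,\edge_j}|^{1/(p-1)}\bigr)^{p-1}$ once you insist that the first factor, integrated over $\x$ against the measure ${\rm diam}(\O)\medge|\mathbi{e}\cdot\n_{K,\edge}|$ of the crossing set, reproduces the weights $\medge d_{K,\edge}$. For admissible meshes this sum telescopes because $d_\edge|\mathbi{e}\cdot\n_\edge|=|(\x_L-\x_K)\cdot\mathbi{e}|$ with $[\x_K,\x_L]\perp\edge$; here it does not. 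A ray can graze a cell near a corner, traversing a length $\varepsilon\ll d_{K,\edge}$ while the exit face has $|\mathbi{e}\cdot\n_{K,\edge}|\sim 1$ and still contributes $d_{K,\edge}\sim h_K$ to the sum; a staircase of such cells makes the sum of order $h_\mesh\,{\rm diam}(\O)/\varepsilon$, hence unbounded. Neither $h_K/d_{K,\edge}\le\theta$ nor $d_{K,\edge}/d_{L,\edge}\le\theta$ repairs this, and the number of crossings itself is of order ${\rm diam}(\O)/h_\mesh$, so it cannot be ``controlled'' -- only summed against telescoping weights. Pushing the directional factor into the first H\"older factor and averaging over $\mathbi{e}$, as you suggest, instead produces $\int_{S^{d-1}}|\mathbi{e}\cdot\n|^{-1/(p-1)}\,\d\mathbi{e}$, which diverges for $p\le 2$. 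As written the plan therefore does not close; I would recommend the $BV$ route above, which is the one the cited references actually take.
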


\begin{lemma}[Discrete Rellich theorem]\label{dfa:discR}
Let $(\polyd_m)_{m\in\N}$ be a sequence of polytopal toolboxes of $\O$
in the sense of Definition \ref{def:polydis}, such that $(\theta_{\polyd_m})_{m\in\N}$ is 
bounded. If $v_m\in X_{\polyd_m,0}$ is such that $(||v_m||_{\polyd_m,0,p})_{m\in\N}$
is bounded, then $(\Pi_{\polyd_m}v_m)_{m\in\N}$ is relatively compact in
$L^p(\O)$.
\end{lemma}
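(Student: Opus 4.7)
The plan is to apply the Kolmogorov-Riesz-Fr\'echet compactness criterion in $L^p(\R^d)$ to the sequence $(\Pi_{\polyd_m} v_m)_{m \in \N}$ extended by zero outside $\Omega$. Three properties must be checked uniformly in $m$: a uniform $L^p$ bound, tightness at infinity, and equi-continuity of translations in $L^p$. The first follows immediately from Lemma \ref{dfa:discP} combined with the assumed uniform bound on $\Vert v_m\Vert_{\polyd_m,0,p}$; the second is trivial since every $\Pi_{\polyd_m}v_m$ is supported in the fixed bounded set $\overline{\Omega}$. The real work lies in the translation estimate.

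Concretely, I would establish a bound of the form
\[
\Vert \Pi_{\polyd} v(\cdot + \xi) - \Pi_{\polyd} v\Vert_{L^p(\R^d)}^p \le C(\theta_\polyd, p)\, |\xi|\,(|\xi| + \sizemesh)^{p-1}\,\Vert v\Vert_{\polyd,0,p}^p
\]
for every $\xi \in \R^d$ and every $v \in X_{\polyd,0}$. Since $\sizemesh \le {\rm diam}(\Omega)$ automatically, this bound, together with the uniform control on $\Vert v_m\Vert_{\polyd_m,0,p}$ and $\theta_{\polyd_m}$, delivers the desired equi-continuity as $\xi \to 0$ uniformly in $m$.

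To derive the translation estimate I fix $\xi$ and examine, pointwise in $\x \in \R^d$, the difference $\Pi_\polyd v(\x+\xi)-\Pi_\polyd v(\x)$, which telescopes into a sum of jumps $v_L - v_K$ across the faces $\sigma$ crossed by the segment $[\x,\x+\xi]$. Because the norm $\Vert \cdot\Vert_{\polyd,0,p}$ only controls jumps between cell and face degrees of freedom (and not directly between two cell values), I would insert the face unknown through the identity $v_L - v_K = (v_L - v_\sigma) + (v_\sigma - v_K)$, with the convention $v_\sigma=0$ for $\sigma \in \edgesext$. Each such increment reads $d_{K,\sigma}\cdot|(v_\sigma-v_K)/d_{K,\sigma}|$, which is exactly a summand appearing in $\Vert v\Vert_{\polyd,0,p}^p$ up to the factor $d_{K,\sigma}$; the ratios $d_{K,\sigma}/d_{L,\sigma}$ are bounded by $\theta_\polyd$, so the two sides of $\sigma$ play symmetric roles.

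The key geometric ingredient is then the classical identity $\int_{\R^d} \mathbf{1}_{[\x,\x+\xi]\cap \sigma \ne \emptyset}\,\d\x \le |\sigma|\,|\xi\cdot \n_\sigma|$, where $\n_\sigma$ is a unit normal to $\sigma$. Combined with H\"older's inequality applied to the pointwise telescoping sum -- with a count of order $|\xi|/\sizemesh + 1$ faces crossed along the direction of $\xi$ -- and Fubini, this collapses the integrated translation into a weighted sum over faces bounded by $\Vert v\Vert_{\polyd,0,p}^p$ times the geometric factor $|\xi|(|\xi|+\sizemesh)^{p-1}$. The main obstacle is precisely this geometric and combinatorial bookkeeping: counting the faces crossed by an arbitrary segment, using $\theta_\polyd$ to compare $d_{K,\sigma}$ and $d_{L,\sigma}$ and to control $|K|/|D_{K,\sigma}|$, and tracking the weights $|\sigma|$ and $d_{K,\sigma}$ correctly when summing the two contributions on each interior face. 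Once the translation estimate is in hand, Kolmogorov-Riesz concludes the relative compactness of $(\Pi_{\polyd_m}v_m)_{m\in\N}$ in $L^p(\Omega)$.
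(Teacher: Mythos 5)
The paper does not actually prove this lemma: it explicitly defers to the references \cite{sushi,koala}, and the proof given there is precisely the one you outline, namely the Kolmogorov--Riesz--Fr\'echet criterion applied to the zero-extensions of $\Pi_{\polyd_m}v_m$, with the uniform $L^p$ bound coming from the discrete Poincar\'e inequality and the whole weight of the argument carried by a translation estimate of the form $\Vert \Pi_\polyd v(\cdot+\xi)-\Pi_\polyd v\Vert_{L^p(\R^d)}^p\le C(\theta_\polyd,p)\,|\xi|\,(|\xi|+\sizemesh)^{p-1}\Vert v\Vert_{\polyd,0,p}^p$. Your routing of each jump $v_L-v_K$ through the face unknown $v_\sigma$ (with $v_\sigma=0$ on $\edgesext$), the use of $d_{K,\sigma}/d_{L,\sigma}\le\theta_\polyd$ to symmetrise the two sides of an interior face, and the measure bound $|\{\x : [\x,\x+\xi]\cap\sigma\neq\emptyset\}|\le|\sigma|\,|\xi\cdot\n_\sigma|$ followed by H\"older and Fubini are exactly the ingredients of that proof. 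So the approach is correct and is the intended one.

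The one point I would push back on is the phrase ``a count of order $|\xi|/\sizemesh+1$ faces crossed''. For a general polytopal mesh a raw cardinality bound of this type is not available from $\theta_\polyd$ alone: a segment grazing a cell near a vertex can enter and leave it over an arbitrarily short length, and the number of cells meeting near a point is not directly controlled. The rigorous version of the H\"older step does not count faces at all; it bounds the \emph{weighted} sum $\sum_{\sigma}\chi_\sigma(\x,\x+\xi)\,(d_{K,\sigma}+d_{L,\sigma})\,|\n_\sigma\cdot\xi|/|\xi|$ by $C(\theta_\polyd)(|\xi|+\sizemesh)$, exploiting the fact that the segment crosses each face in the direction of its outward normal so that the projections of the relevant points onto the direction of $\xi$ essentially telescope, together with $d_{K,\sigma}=(\x-\x_K)\cdot\n_{K,\sigma}$ for $\x\in\sigma$ and the star-shapedness of the cells. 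This is exactly the ``geometric and combinatorial bookkeeping'' you flag as the main obstacle, so your plan is sound; just be aware that the quantity to control is this weighted sum, not the number of crossed faces, and that this is where $\theta_\polyd$ genuinely enters.
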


%\begin{remark} The discrete functional analysis tools in \cite{sushi,koala}
%are initially written for the space \eqref{dfa:spaceH}.
%As explained in \cite{koala}, these tools also apply to $X_\mesh$,
%that only has degrees of freedom at the cells. An easy reconstruction
%of face values allows us to embed $X_\mesh$ into the space defined
%by \eqref{dfa:spaceH}, and to use the results of \cite{koala} on this space.
%This is how lemmas \ref{dfa:discP} and \ref{dfa:discR} are deduced from
%results in \cite{koala}.
%\end{remark}

\begin{lemma}[Discrete approximate Stokes formula]
Let $\polyd$ be a polytopal toolbox of $\O$ in the sense of Definition \ref{def:polydis}.
If $\bvarphi\in C^\infty_c(\R^d)^d$ and $v\in X_{\polyd,0}$, then
\be\label{stokesapp}
\left|\int_\O \left[\nabla_\polyd v(\x)\cdot\bvarphi(\x) + \Pi_\polyd v(\x)
\div\bvarphi(\x)\right]\d\x\right|\le
(d|\O|)^{\frac{p-1}{p}}||\bvarphi||_{W^{1,\infty}(\R^d)^d}
||v||_{\polyd,0,p}h_\mesh.
\ee
Moreover, if $(v_\edge)_{\edge\in\edgescv}$ are the exact values
at $(\centeredge)_{\edge\in\edgescv}$ of an affine mapping $L$, then
$\nabla_\polyd v=\nabla L$ on $K$.
\label{lem:stokes}\end{lemma}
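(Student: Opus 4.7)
The plan is to expand the integral cell by cell, exploiting the piecewise constancy of $\Pi_\polyd v$ and $\nabla_\polyd v$ on each $K\in\mesh$, and then to perform a discrete integration by parts that isolates a globally telescoping face contribution from a small local remainder of size $O(h_\mesh)$. I would first write the integrand as $\sum_{K\in\mesh}\int_K[\nabla_\polyd v\cdot\bvarphi+\Pi_\polyd v\,\div\bvarphi]\d\x$ and, on a single cell, note that the gradient term is exactly $|K|\nabla_\polyd v\cdot\bvarphi_K$ with $\bvarphi_K:=\frac{1}{|K|}\int_K\bvarphi\d\x$, while Stokes' formula converts the divergence term into $v_K\sum_{\sigma\in\edgescv}|\sigma|\bvarphi_\sigma\cdot\n_{K,\sigma}$ with $\bvarphi_\sigma:=\frac{1}{|\sigma|}\int_\sigma\bvarphi\d\gamma$.

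The key algebraic step, using the first expression of $\nabla_\polyd v$ in \eqref{def:nablapolyd} together with $\sum_\sigma|\sigma|\n_{K,\sigma}=0$, is to rewrite the cell contribution as
\[
\sum_{\sigma\in\edgescv}|\sigma|(v_\sigma-v_K)(\bvarphi_K-\bvarphi_\sigma)\cdot\n_{K,\sigma}+\sum_{\sigma\in\edgescv}|\sigma|v_\sigma\bvarphi_\sigma\cdot\n_{K,\sigma}.
\]
This identity follows from the elementary expansion $(v_\sigma-v_K)\bvarphi_K+v_K\bvarphi_\sigma=(v_\sigma-v_K)(\bvarphi_K-\bvarphi_\sigma)+v_\sigma\bvarphi_\sigma-v_K\bvarphi_K$ after absorbing the last piece with $v_K\bvarphi_K\cdot\sum_\sigma|\sigma|\n_{K,\sigma}=0$. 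Summed over $K\in\mesh$ the second piece cancels: for $\sigma\in\edgesint$ the two cells $\mesh_\sigma=\{K,L\}$ contribute with opposite outward normals, and for $\sigma\in\edgesext$ we have $v_\sigma=0$ by the homogeneous boundary condition.

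It then remains to bound the first piece. I would estimate $|\bvarphi_K-\bvarphi_\sigma|\le h_K\|\bvarphi\|_{W^{1,\infty}(\R^d)^d}\le h_\mesh\|\bvarphi\|_{W^{1,\infty}(\R^d)^d}$ and apply H\"older's inequality with exponents $p$ and $p'$ to the sum $\sum_{K,\sigma}|\sigma|\,|v_\sigma-v_K|$, factoring each term as $(|\sigma|d_{K,\sigma})^{1/p'}\cdot(|\sigma|d_{K,\sigma})^{1/p}|v_\sigma-v_K|/d_{K,\sigma}$. The geometric identity $\sum_{\sigma\in\edgescv}|\sigma|d_{K,\sigma}=d|K|$ (obtained by applying Stokes to $\x\mapsto\x-\xcv$ and using \eqref{def.dcvedge}) yields $\sum_{K,\sigma}|\sigma|d_{K,\sigma}=d|\O|$, delivering the factor $(d|\O|)^{(p-1)/p}$ and the claimed estimate.

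For the linear-exactness statement, I would write $L(\x)=\bfa\cdot\x+b$ with $\bfa=\nabla L$, so that $v_\sigma=\bfa\cdot\centeredge+b$. Applying Stokes componentwise to $\x\mapsto x_j$ on $K$ gives $\sum_{\sigma\in\edgescv}|\sigma|(\centeredge)_j\n_{K,\sigma}=|K|e_j$; combined with $\sum_\sigma|\sigma|\n_{K,\sigma}=0$, the second form of the discrete gradient in \eqref{def:nablapolyd} produces $\nabla_\polyd v=\bfa=\nabla L$ on $K$. I expect the only delicate point to be the algebraic rearrangement that cleanly separates the globally vanishing face sum from the locally small $O(h_\mesh)$ term; once that splitting is pinned down, the H\"older estimate and the linear-exactness claim both follow in a few lines.
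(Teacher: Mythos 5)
Your proof is correct, and it follows the standard route that the paper itself does not reproduce but defers to \cite{sushi,koala}: cell-wise Stokes' formula, cancellation of the conservative face contributions $|\edge|v_\edge\bvarphi_\edge\cdot\ncvedge$ across interior faces (using $v_\edge=0$ on $\edgesext$), and H\"older's inequality combined with the identity $\sum_{\edge\in\edgescv}\medge d_{K,\edge}=d\mcv$; the linear-exactness claim via $\int_\edge \x\,\d\gamma(\x)=\medge\centeredge$ and Stokes applied to the coordinate functions is also the standard argument. The per-face identity you display is, taken pointwise, off by the term $v_K\bvarphi_K$, but since that discrepancy is multiplied by $\medge\ncvedge$ and summed over $\edgescv$ it vanishes, exactly as you note, so the argument is sound as written.
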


The preceding discrete functional analysis results are useful for the analysis
of a wide number of numerical methods, thanks to the notion of control
of gradient discretisations by polytopal toolboxes.

\begin{definition}[Control of a gradient discretisation by a polytopal toolbox]
\label{def:inhdfa}
Let $\Omega$ be a bounded polytopal open subset of $\R^d$ ($d\ge 1$), let $\disc$ be a gradient discretisation in the sense of Definition \ref{defgraddisc}, and let $\polyd$ be a polytopal toolbox of $\O$ in the sense of Definition \ref{def:polydis}. A control of $\disc$ by $\polyd$ is a linear mapping $\emb~:~X_{\disc,0}\longrightarrow X_{\polyd,0}$. We then define
\begin{align}
\nonumber
&||\emb||_{\disc,\polyd} = \max_{v\in X_{\disc,0}\setminus\{0\}}\frac {\Vert \emb(v)\Vert_{\polyd,0,p}} {\Vert \nabla_{\disc} v \Vert_{L^p(\O)^d}},
\\ \nonumber
& \omega^{\Pi}(\disc,\polyd,\emb) = \max_{v\in X_{\disc,0}\setminus\{0\}}\frac { 
\Vert \Pi_{\disc} v-\Pi_{\polyd}\emb(v)\Vert_{L^p(\O)} } {\Vert \nabla_{\disc} v \Vert_{L^p(\O)^d}},
\\ \nonumber
& \omega^\nabla(\disc,\polyd,\emb) = \max_{v\in X_{\disc,0}\setminus\{0\}}\frac { 
\dsp \sum_{K\in\mesh}\left| \int_K \left[\grad_{\disc} v(\x)-\grad_{\polyd}\emb(v)(\x)\right]\d\x\right|} {\Vert \nabla_{\disc} v \Vert_{L^p(\O)^d}}.
\end{align}
\end{definition}

In most of the examples of gradient discretisations in Section \ref{sec:review},
the following definition and proposition are used to establish the coercivity,
compactness and limit-conformity.

\begin{definition}[Regularity of a sequence of polytopal meshes] \label{def:regpolyd}
 A sequence of polytopal meshes $(\polyd_m)_{m\in\N}$ in the sense of Definition \ref{def:polymesh} is regular if $(\theta_{\polyd_m})_{m\in\N}$
is bounded and if $h_{\mesh_m}\to 0$ as $m\to+\infty$. 
\end{definition}
\begin{proposition}[Properties of gradient discretisations controlled by polytopal toolboxes]\label{prop:inhdfa}
Let $\Omega$ be a boun\-ded polytopal open subset of $\R^d$ ($d\ge 1$). 
Let $(\disc_m)_{m\in\N}$  be a sequence of gradient discretisations
in the sense of Definition \ref{defgraddisc}, and let $(\polyd_m)_{m\in\N}$ be a sequence of polytopal toolboxes of $\O$ in the sense of Definition \ref{def:polydis} such that the corresponding sequence of polytopal meshes is regular in the sense of Definition \ref{def:regpolyd}.
We take, for all $m\in\N$, a control $\emb_m$  of $\disc_m$ by $\polyd_m$ in the sense of Definition \ref{def:inhdfa}, and we assume that
\begin{align}
\label{cnt:poly1}
&\mbox{There exists } \CSTcontrol>0 \mbox{ such that, for all } m\in\N,\ ||\emb_m||_{\disc_m,\polyd_m}\le \CSTcontrol,
\\ \label{cnt:poly2}
& \lim_{m\to\infty} \omega^{\Pi}(\disc_m,\polyd_m,\emb_m) =0,
\\ \label{cnt:poly3}
& \lim_{m\to\infty} \omega^{\nabla}(\disc_m,\polyd_m,\emb_m) =0.
\end{align}
Then $(\disc_m)_{m\in\N}$ is coercive in the sense of Definition \ref{def-coer}, limit-conforming in the sense of Definition \ref{def-limconf}, and compact in the sense of  Definition \ref{def-comp}.
\end{proposition}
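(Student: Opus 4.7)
The plan is to handle the three properties in order of increasing difficulty, using in each case the decomposition $\Pi_{\disc_m} v = (\Pi_{\disc_m} v - \Pi_{\polyd_m}\emb_m(v)) + \Pi_{\polyd_m}\emb_m(v)$ (and similarly for gradients) to transfer information from the polytopal toolbox to the gradient discretisation.

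\medskip
\textbf{Coercivity.} For $v \in X_{\disc_m,0}$, I would write
\[
\Vert \Pi_{\disc_m} v\Vert_{L^p(\O)} \le \Vert \Pi_{\disc_m} v - \Pi_{\polyd_m}\emb_m(v)\Vert_{L^p(\O)} + \Vert \Pi_{\polyd_m}\emb_m(v)\Vert_{L^p(\O)},
\]
bound the first term by $\omega^\Pi(\disc_m,\polyd_m,\emb_m)\Vert \nabla_{\disc_m} v\Vert_{L^p(\O)^d}$, and the second by $\cter{dfa:P}\Vert \emb_m(v)\Vert_{\polyd_m,0,p} \le \cter{dfa:P}\CSTcontrol \Vert\nabla_{\disc_m} v\Vert_{L^p(\O)^d}$ via Lemma \ref{dfa:discP} and \eqref{cnt:poly1}. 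Since $(\omega^\Pi(\disc_m,\polyd_m,\emb_m))_{m\in\N}$ is bounded (by \eqref{cnt:poly2}), $C_{\disc_m}$ is uniformly bounded.

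\medskip
\textbf{Compactness.} Given $(v_m)_{m\in\N}$ with $v_m\in X_{\disc_m,0}$ and $(\Vert\nabla_{\disc_m}v_m\Vert_{L^p(\O)^d})_{m\in\N}$ bounded, \eqref{cnt:poly1} implies that $(\Vert\emb_m(v_m)\Vert_{\polyd_m,0,p})_{m\in\N}$ is bounded. By Lemma \ref{dfa:discR} (using the regularity of $(\polyd_m)_{m\in\N}$), $(\Pi_{\polyd_m}\emb_m(v_m))_{m\in\N}$ is relatively compact in $L^p(\O)$. By \eqref{cnt:poly2}, $\Vert \Pi_{\disc_m} v_m - \Pi_{\polyd_m}\emb_m(v_m)\Vert_{L^p(\O)} \to 0$, so $(\Pi_{\disc_m}v_m)_{m\in\N}$ is relatively compact as well.

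\medskip
\textbf{Limit-conformity.} By Remark \ref{rem:eqdef}, since the sequence is already coercive, it suffices to check $W_{\disc_m}(\bvarphi) \to 0$ for $\bvarphi \in C^\infty_c(\R^d)^d$. Fix such $\bvarphi$ and $v \in X_{\disc_m,0}\setminus\{0\}$, and decompose
\begin{align*}
T_m(v,\bvarphi) &:= \int_\O \left[\grad_{\disc_m} v\cdot\bvarphi + \Pi_{\disc_m} v \,\div\bvarphi\right]\d\x \\
&= \underbrace{\int_\O (\Pi_{\disc_m} v - \Pi_{\polyd_m}\emb_m(v))\div\bvarphi\,\d\x}_{A_m} \\
&\quad + \underbrace{\int_\O (\grad_{\disc_m} v - \grad_{\polyd_m}\emb_m(v))\cdot\bvarphi\,\d\x}_{B_m} + \underbrace{\int_\O \left[\grad_{\polyd_m}\emb_m(v)\cdot\bvarphi + \Pi_{\polyd_m}\emb_m(v)\div\bvarphi\right]\d\x}_{C_m}.
\end{align*}
The term $A_m$ is controlled by $\omega^\Pi(\disc_m,\polyd_m,\emb_m)\Vert\div\bvarphi\Vert_{L^{p'}(\O)}\Vert\nabla_{\disc_m}v\Vert_{L^p(\O)^d}$, which tends to $0$ uniformly by \eqref{cnt:poly2}. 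The term $C_m$ is bounded by Lemma \ref{lem:stokes} applied to the polytopal toolbox $\polyd_m$ and $\emb_m(v)$: $|C_m| \le (d|\O|)^{(p-1)/p}\Vert\bvarphi\Vert_{W^{1,\infty}}\Vert\emb_m(v)\Vert_{\polyd_m,0,p} h_{\mesh_m}$, and this tends to $0$ uniformly thanks to \eqref{cnt:poly1} and $h_{\mesh_m}\to 0$.

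The main obstacle is $B_m$: the cell-wise integral quantity $\omega^\nabla$ measures only differences of cell-averages of gradients, whereas $B_m$ tests these differences against the non-constant $\bvarphi$. I would split $\bvarphi$ in each cell around its value at $\x_K$:
\[
B_m = \sum_{K\in\mesh_m}\int_K (\grad_{\disc_m} v - \grad_{\polyd_m}\emb_m(v))\cdot(\bvarphi - \bvarphi(\x_K))\,\d\x + \sum_{K\in\mesh_m}\bvarphi(\x_K)\cdot\int_K (\grad_{\disc_m} v - \grad_{\polyd_m}\emb_m(v))\,\d\x.
\]
The first sum is bounded by $h_{\mesh_m}\Vert\nabla\bvarphi\Vert_\infty(\Vert\grad_{\disc_m}v\Vert_{L^p(\O)^d}+\Vert\grad_{\polyd_m}\emb_m(v)\Vert_{L^p(\O)^d})|\O|^{1/p'}$ via H\"older's inequality, and \eqref{eq:majnordisc} together with \eqref{cnt:poly1} makes both gradient norms comparable to $\Vert\nabla_{\disc_m}v\Vert_{L^p(\O)^d}$; so this part is $o(1)\Vert\nabla_{\disc_m}v\Vert_{L^p(\O)^d}$. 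The second sum is bounded by $\Vert\bvarphi\Vert_\infty\cdot\omega^\nabla(\disc_m,\polyd_m,\emb_m)\Vert\nabla_{\disc_m}v\Vert_{L^p(\O)^d}$ and tends uniformly to $0$ by \eqref{cnt:poly3}. Dividing $|T_m(v,\bvarphi)|$ by $\Vert\nabla_{\disc_m}v\Vert_{L^p(\O)^d}$ and taking the supremum over $v$ yields $W_{\disc_m}(\bvarphi)\to 0$, completing the proof.
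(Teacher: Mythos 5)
Your proof is correct and follows essentially the same route as the paper: the same decomposition into the three terms $A_m$, $B_m$, $C_m$, the same use of Lemmas \ref{dfa:discP}, \ref{dfa:discR} and \ref{lem:stokes}, and the same splitting of $B_m$ into an oscillation part of $\bvarphi$ plus a cell-average part controlled by $\omega^\nabla$. The only (harmless) difference is that the paper centres $\bvarphi$ at its cell mean $\bvarphi_K$ rather than at $\bvarphi(\x_K)$, which makes the piecewise-constant $\grad_{\polyd_m}\emb_m(v)$ drop out of the oscillation term and avoids your extra appeal to \eqref{eq:majnordisc} and \eqref{cnt:poly1}.
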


\begin{proof} We let $\omega_m = \max[\omega^{\Pi}(\disc_m,\polyd_m,\emb_m),\omega^{\nabla}(\disc_m,\polyd_m,\emb_m)]$ and $M = \max_{m\in\N} \omega_m$.

\textsc{Coercivity}: using \eqref{cnt:poly2} and Lemma \ref{dfa:discP},
we observe that, for any $v\in X_{\disc_m,0}$,
\[
||\Pi_{\disc_m} v||_{L^p(\O)^d} \le M ||\nabla_{\disc_m}v||_{L^p(\O)^d} + \Vert \Pi_{\polyd_m}\emb_m(v)\Vert_{L^p(\O)} \le M ||\nabla_{\disc_m}v||_{L^p(\O)^d} +
\cter{dfa:P}||\emb_m(v)||_{\polyd,0,p}.
\]
Property \eqref{cnt:poly1} therefore give
$||\Pi_{\disc_m} v||_{L^p(\O)^d} \le (M +  \cter{dfa:P} \CSTcontrol) ||\nabla_{\disc_m}v||_{L^p(\O)^d}$, and the coercivity follows.

\textsc{Limit-conformity}: as stated in Remark \ref{rem:eqdef}, since $\Omega$ is polytopal and therefore locally star-shaped, we only need to consider $\bvarphi\in C^\infty_c(\R^d)^d$. 
By the triangular inequality, \eqref{cnt:poly2} and \eqref{stokesapp}, we have
\begin{align}
\Bigg|\int_\O \Big(\grad_{\disc_m} &v(\x)\cdot\bvarphi(\x) + \Pi_{\disc_m}v(\x) \div\bvarphi(\x)\Big)  \d\x\Bigg|\nonumber\\
&\le \left|\int_\O [\grad_{\disc_m}v(\x) - \nabla_{\polyd_m} \emb_m(v)(\x)]\cdot\bvarphi(\x) \d\x\right|+
||\div\bvarphi||_{L^{p'}(\O)}\omega_m||\nabla_{\disc_m}v||_{L^p(\O)^d}\nonumber\\
&\qquad+\left|\int_\O \left[\nabla_{\polyd_m} \emb_m(v)(\x)\cdot\bvarphi(\x) + \Pi_{\polyd_m} \emb_m(v)(\x)
\div\bvarphi(\x)\right]\d\x\right|\nonumber\\
&\le \left|\int_\O [\grad_{\disc_m}v(\x) - \nabla_{\polyd_m} \emb_m(v)(\x)]\cdot\bvarphi(\x) \d\x\right|+
||\div\bvarphi||_{L^{p'}(\O)}\omega_m||\nabla_{\disc_m}v||_{L^p(\O)^d}\nonumber\\
&\qquad+ (d|\O|)^{\frac{p-1}{p}}C_\bvarphi
||\emb_m(v)||_{\polyd_m,0,p}h_{\mesh_m}
\label{emb:lc.1}\end{align}
where $C_\bvarphi=||\bvarphi||_{W^{1,\infty}(\R^d)^d}$.
We define $\bvarphi_K=\frac 1 {\mcv}\int_K \bvarphi(\x)  \d\x$ and notice that
$|\varphi_K|\le C_\bvarphi$ and $|\bvarphi(\x) - \bvarphi_K|\le C_\bvarphi h_{\mesh_m}$ for all $\x\in K$. Therefore, since $\nabla_{\polyd_m}\emb_m(v)$ is constant in each cell,
\begin{align*}
 \Bigg|\int_\O &[\grad_{\disc_m}v(\x) - \nabla_{\polyd_m} \emb_m(v)(\x)]\cdot\bvarphi(\x)  \d\x
\Bigg|= \Bigg|\sum_{K\in\mesh_m} \int_K[\grad_{\disc_m}v(\x) - \nabla_{\polyd_m} \emb_m(v)(\x)]\cdot\bvarphi(\x)  \d\x\Bigg|\\
&= \Bigg|\sum_{K\in\mesh_m} 
\left(\int_K \grad_{\disc_m}v(\x)\cdot
[\bvarphi(\x)-\bvarphi_K]  \d\x
+ \bvarphi_K\cdot \int_K (\grad_{\disc_m}v(\x) - \nabla_{\polyd_m} \emb_m(v)(\x))\d\x\right)\Bigg|\\
& \le C_\bvarphi \sum_{K\in\mesh_m} \left( h_{\mesh_m}\int_K|\grad_{\disc_m}v(\x)|\d\x +   \left|\int_K(\grad_{\disc_m}v(\x) -\nabla_{\polyd_m} \emb_m(v)(\x)) \d\x\right|\right)\\
  &\le C_\bvarphi \left( h_{\mesh_m}|\O|^{\frac{p-1}{p}} 
   +   \omega_m\right)\Vert \nabla_{\disc_m} v \Vert_{L^p(\O)^d}.
\end{align*}
We used H\"older's inequality and  \eqref{cnt:poly3} in the last line.
Plugged into \eqref{emb:lc.1} and using \eqref{cnt:poly1} this gives
\[
W_{\disc_m}(\bvarphi)
\le C_\bvarphi  \left( h_{\mesh_m}|\O|^{\frac{p-1}{p}}    +   \omega_m \right)+ ||\div\bvarphi||_{L^{p'}(\O)}\omega_m+
(d|\O|)^{\frac{p-1}{p}}C_\bvarphi \CSTcontrol h_{\mesh_m}.
\]
The limit conformity of $(\disc_m)_{m\in\N}$ follows.

\textsc{Compactness}: by \eqref{cnt:poly1}, if
$(\nabla_{\disc_m}v_m)_{m\in\N}$ is bounded in $L^p(\O)^d$ then
$ ||\emb_m(v_m)||_{\polyd_m,0,p}$ is bounded. Applying Lemma \ref{dfa:discR}, we obtain
the relative compactness of $(\Pi_{\polyd_m}\emb_m(v))_{m\in\N}$ in $L^p(\O)$. Since
$||\Pi_{\disc_m}v_m-\Pi_{\polyd_m}\emb_m(v)||_{L^p(\O)}\to 0$ as $m\to\infty$
by  \eqref{cnt:poly2}, we deduce that $(\Pi_{\disc_m}v_m)_{m\in\N}$ is relatively compact in $L^p(\O)$.
\end{proof}

\section{Review of gradient discretisations} \label{sec:review}

We now study a number of known methods among finite element, finite volume methods, mimetic  methods and related discretisation schemes which are all based on polytopal meshes.
Each of the following sections is devoted to a particular method which is shown to be the
gradient scheme of a gradient discretisation referred to as $\disc$;
for each method we define a regular sequence $(\disc_m)_{m\in \N}$ of gradient discretisations, based on the method itself and on the regularity of a polytopal mesh (Definition \ref{def:regpolyd}), and we show the following property.
\smallskip

\begin{center}~\hfill
\begin{minipage}{14cm}
 The regular sequence  $(\disc_m)_{m\in \N}$ is coercive, consistent, limit-conforming and compact in the sense of the definitions in Section \ref{sec:deftools}.
\end{minipage}
 \hfill$(\mathcal{P})$
\end{center}

\medskip

 The proof of $(\mathcal{P})$ relies on the notions of LLE gradient discretisations (section \ref{sec:LL}), barycentric condensation (section \ref{sec:bc}), mass lumping (section \ref{sec:masslump}) and polytopal toolbox (section \ref{sec:dfa}).
% 
% \begin{theorem}[Properties of the ``$\alpha$ method'']\label{theo:generic}
% Let $(\disc_m)_{m\in\N}$ be a regular sequence of gradient discretisations defined by the ``$\alpha$ method''.
% Then $(\disc_m)_{m\in\N}$ is coercive, consistent, limit-conforming and compact in the sense of the definitions in Section \ref{sec:deftools}.
% \end{theorem}
% 
% Hence this theorem only focuses on the four core properties, among the five ones, which need a proof. The piecewise constant reconstruction property is imposed by the construction of $\disc$ only in some of the following methods.
% 
%  

\subsection{$\P_k$  finite element methods}\label{sec:galerkin}

\subsubsection{Conforming methods: $\P_k$ finite elements}\label{sec:Pk}

Let $\polyd$ be a simplicial discretisation of $\O$, that is a polytopal discretisation in the sense of Definition \ref{def:polymesh} such that for any $K\in\mesh$ we have ${\rm Card}(\mathcal E_K)=d+1$. 
Let $k\in\N\setminus\{0\}$.
We follow Definition \ref{def:LL} for the construction of $\disc=(X_{\disc,0},\nabla_\disc,\Pi_\disc)$ by describing the partition of $\O$, the functions $\alpha_i$ and the
local linearly exact gradients in the elements of the partition.
\begin{enumerate}
 \item The set $I$ of geometrical entities attached to the dof is $I =\vertices^{(k)}$, 
 and the set of approximation points is $S=I$, where $\vertices^{(k)}=\bigcup_{K\in\mesh} \vertices^{(k)}_K$ and $\vertices^{(k)}_K$ is the set of the points $\x$ of the form
 \begin{equation}
  \x =\sum_{\vertex\in\verticescv} \frac {i_\vertex}{k} \vertex
\quad\mbox{ with }(i_\vertex)_{\vertex\in\verticescv}\in \{0,\ldots,k\}^{\verticescv}
\mbox{ such that }\sum_{\vertex\in\verticescv} i_\vertex = k.
 \label{def:vertexpk}\end{equation}
Then  $\Ii = \vertices^{(k)}_{\rm int}$ (the subset of the interior vertices) and $\Ib =  \vertices^{(k)}_{\rm ext}$ (boundary vertices), and the partition of $\O$ is given by $\partition = \mesh$. For $U=K\in\partition$, we let $I_U = \vertices^{(k)}_K$.

 \item The reconstruction operator $\Pi_\disc$ in \eqref{ll:piD} is defined using the basis functions
$(\alpha_\vertex)_{\vertex\in\vertices_K^{(k)}}$, called in this particular case the Lagrange interpolation operators and defined the
following way: in each cell $K$, $\alpha_\vertex$ 
is the polynomial function of $\x$ with degree $k$, such that $\alpha_\vertex(\vertex) = 1$ and $\alpha_\vertex(\vertex') = 0$ for all $\vertex'\in \vertices^{(k)}_K\setminus\{\vertex\}$. This leads to
 \[
  \forall v\in X_{\disc,0},\ \forall \x\in \O,\  \Pi_\disc v(\x) = \sum_{\vertex\in\vertices^{(k)}} v_\vertex \alpha_\vertex(\x).
 \]

  \item The linearly exact gradient reconstruction in $K$ is 
 \[
 \forall \x\in K,\  \gr_K v(\x) = \sum_{\vertex\in\verticescv^{(k)}} v_\vertex \grad \alpha_\vertex(\x)=\nabla(\Pi_\disc v)(\x).
 \]
 \item
 We have $\Pi_\disc v \in W^{1,p}_0(\O)$ so
the Poincar\'e inequality in $W^{1,p}_0(\Omega)$ implies that $\Vert \nabla_\disc \cdot \Vert_{L^p(\Omega)^d}$ is a norm on $X_{\disc,0}$.
\end{enumerate}

Defining the regularity of a sequence of $\P_k$ discretisations $(\disc_m)_{m\in\N}$ merely as the regularity of the underlying polytopal meshes (Definition \ref{def:regpolyd}) is sufficient to 
obtain the boundedness of $(\regLLE(\disc_m))_{m\in\N}$; hence  Proposition \ref{prop:LL} implies the consistency.
Since $\nabla \Pi_\disc v=\nabla_\disc v$ we have $W_\disc\equiv 0$ and the limit conformity is
trivial; the coercivity and the compactness are consequences of the Poincar\'e inequality
and the Rellich theorem in $W^{1,p}_0(\O)$ respectively.
This establishes $(\mathcal P)$ for $\P_k$ gradient discretisations.

\subsubsection{Mass-lumped $\P_1$ finite elements}  

\begin{figure}
\begin{center}
\input{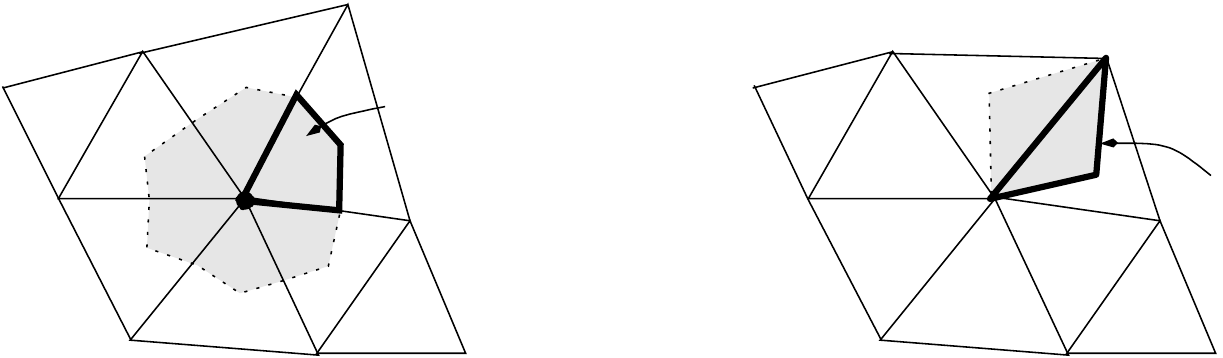_t}
\caption{Partitions for mass-lumping of the $\P_1$ (left) and
non-conforming $\P_1$ (right) finite element methods.}\label{fig:ml}
\end{center}
\end{figure}

We construct a mass-lumped version of the $\P_1$ gradient discretisation
as per Definition \ref{def:ml}, with the natural geometrical entities attached to the dof $\vertices^{(1)} = \vertices$. 
Subdomains $(V_\vertex)_{\vertex\in \vertices}$
with points of $\vertices$ as centres can be constructed in various ways. One way
is to define $V_\vertex$ as the set of all $\y\in \O$
such that $\alpha_\vertex(\y)> \alpha_{\vertex'}(\y)$ for any other $\vertex'\in \vertices$.
The left part of Figure \ref{fig:ml} illustrates the construction
of the partitions $(V_\vertex)_{\vertex\in \vertices}$ in the case $d=2$ (then $(V_\vertex)_{\vertex\in\vertices}$ is sometimes called  the barycentric dual mesh of  $\polyd$).

A Taylor expansion in each $V_\vertex\cap K$ then shows that Estimate \eqref{est:ml} holds with $\omega_m = h_{\mesh_m}$,
and thus by Theorem \ref{th:ml} we see that the mass-lumped $\P_1$ gradient discretisation
satisfies the property $(\mathcal{P})$.

\subsection{Non-conforming $\P_1$ finite elements} \label{sec:ncPk}

\subsubsection{Standard non-conforming $\P_1$ reconstruction}

Non-conforming $\P_1$ finite elements consist in approximating the
solution to \eqref{pblinw} by functions that are piecewise linear on
triangles and continuous at the edge midpoints -- but not necessarily continuous
on the whole edge. These approximating functions therefore do not lie in
$H^1_0(\O)$, and do not satisfy the exact Stokes formula; hence the name
``non-conforming''. 

Let $\polyd$ be a simplicial mesh of $\O$, that is a polytopal mesh in the sense of Definition \ref{def:polymesh} such that for any $K\in\mesh$ we have ${\rm Card}(\mathcal E_K)=d+1$. 
We refer to Definition \ref{def:LL} for the construction of $\disc$.

\begin{enumerate}
 \item The set of  geometrical entities attached to the dof is $I = \edges$ and
the approximation points are $S=(\centeredge)_{\edge\in\edges}$. Then  $\Ii = \edgesint$ and $\Ib =  \edgesext$, and the partition of $\O$ is given by $\partition = \mesh$.
For all $U=K\in\partition$, we let $I_U = \edgescv$.
 
 \item The reconstruction $\Pi_\disc$ in \eqref{ll:piD} is defined using the affine non-conforming finite element basis functions $(\alpha_\edge)_{\edge\in\edges}$
defined by: $\alpha_\edge$ is linear in each simplex, $\alpha_\edge(\centeredge)=1$ and
$ \alpha_\edge(\overline{\x}_{\edge'})=0$ for all $\edge'\in \edges\backslash\{\edge\}$.
This leads to
 \[
  \forall v\in X_{\disc,0},\ \forall \x\in \O,\  \Pi_\disc v(\x) = \sum_{\edge\in\edges} v_\edge \alpha_\edge(\x).
 \]

 \item The linearly exact gradient reconstruction in $K$ is defined by the constant value
 \[
 \forall \x\in K\,,\;  \gr_K v(\x) = \sum_{\edge\in\edgescv} v_\edge \grad \alpha_\edge(\x).
 \]
 
 %It satisfies $\nabla\Pi_\disc v=\nabla_\disc v$ in each $K\in\mesh$.
\item The fact that $\Vert \nabla_\disc \cdot \Vert_{L^p(\Omega)^d}$ is a norm on $X_{\disc,0}$ is deduced from the injectivity of the mapping $\emb$, defined in the course of the proof of the property $(\mathcal{P})$ below.

\end{enumerate}

The regularity of the non-conforming  $\P_1$  gradient discretisations is then defined as the regularity of the underlying polytopal discretisations $(\polyd_m)_{m\in\N}$ (see Definition \ref{def:regpolyd}).

\begin{proof}[Proof of the property $(\mathcal{P})$ for non-conforming $\P_1$ gradient
discretisations]

We drop the index $m$ from time to time for sake of legibility, and
all constants below do not depend on $m$ or the considered
cells/edges. 
Let us define a control of $\disc$ by $\polyd$ in the sense of Definition \ref{def:inhdfa}, where $\polyd$ is the simplicial mesh associated to $\disc$,
with $\xcv = \overline{\x}_K =\frac{1}{d+1}\sum_{\edge\in\edgescv}\centeredge$
the centres of gravity of the cells $K$. 
  We define the linear (injective) mappings $\emb~:~X_{\disc_m,0}
\longrightarrow X_{\polyd_m,0}$
by $\emb(u)_K = \frac{1}{d+1}\sum_{\edge\in\edgescv} u_\edge=\Pi_\disc u(\x_K)$ and $\emb(u)_\edge = u_\edge
=\Pi_\disc u(\centeredge)$.

Since $\emb(u)_K = \Pi_\disc u(\xcv)$ and $\gr_K u=\nabla(\Pi_\disc u)$ in $K$, we get
\begin{equation}
 \emb(u)_\edge - \emb(u)_K = \gr_K u\cdot (\centeredge - \xcv).
\label{eq:punnc}\end{equation}
Therefore, since $\frac{|\centeredge - \xcv|}{d_{K,\edge}} \le \frac{h_K}{d_{K,\edge}}\le \theta_{\polyd}$,
\[
\sum_{\edge\in\edgescv}\medge d_{K,\edge} \left|\frac {\emb(u)_\edge - \emb(u)_K} {\dcvedge}\right|^p \le \theta_{\polyd}^p d \mcv\, |\gr_K u|^p.
\]
This implies \eqref{cnt:poly1}. We now observe that the affine function $\alpha_\edge$ reaches its extremal values at the vertices of $K$. It is easy to see that $\alpha_\edge(\vertex_\edge) = 1 - d$, where $\vertex_\edge$ is the vertex opposite to the face $\edge$, and that $\alpha_\edge(\vertex_{\edge'}) = 1$ for all $\edge'\neq\edge$.
Therefore, for $\x\in K$,
\[
|\Pi_\disc u(\x) -\emb(u)_K| = \left|\sum_{\edge\in\edgescv} ( \emb(u)_\edge - \emb(u)_K) \alpha_\edge(\x)\right| \le (d+1)\max(1,d-1) \max_{\edge\in\edgescv} |\gr_K u\cdot (\centeredge - \xcv)|.
\]
This inequality implies $\omega^\Pi(\disc,\polyd,\emb) \le (d+1)\max(1,d-1)  h_{\mesh}$ and therefore \eqref{cnt:poly2} holds. Finally, recalling that $\Pi_\disc u$ is affine
in each simplex $K$ and that $\nabla_\polyd$ is exact on interpolants of affine
functions (cf. Lemma \ref{lem:stokes}), we see that $\grad_\disc u = \grad_\polyd \emb(u)$ in $\O$.
Hence $\omega^\nabla(\disc,\polyd,\emb) =0$ and \eqref{cnt:poly3} holds.
Proposition \ref{prop:inhdfa} therefore shows that $(\disc_m)_{m\in\N}$ is coercive in the sense of Definition \ref{def-coer}, limit-conforming in the sense of Definition \ref{def-limconf}, and compact in the sense of  Definition \ref{def-comp}.

Since non-conforming $\P_1$ gradient discretisations are LLE gradient discretisations, the
consistency of $(\disc_m)_{m\in\N}$ follows from Proposition \ref{prop:LL}
by noticing that $\regLLE(\disc_m)$ is controlled by $\theta_{\polyd_m}$.
\end{proof}

\subsubsection{Mass-lumped non-conforming $\P_1$ reconstruction}

Let us recall that, in the case $d=2$, for all pair $(\edge,\edge')\in\edges^2$ with $\edge\neq\edge'$, there holds
\[
 \int_\O \alpha_\edge(\x)\alpha_{\edge'}(\x)\d\x = 0.
\]
This property ensures that the non-conforming $\P_1$ method has a diagonal mass matrix, which is useful for computing \eqref{time}. However, Property \eqref{def:proppc} is not satisfied, 
which might prevent the usage of the non-conforming $\P_1$ scheme for some
nonlinear problems. To recover a piecewise constant reconstruction, and thus
\eqref{def:proppc}, we apply to the preceding gradient discretisation the mass lumping
process as in Definition \ref{def:ml}.
Recalling that the set of geometrical entities attached to the dof is $I=\edges$,
we define the subdomains $(V_i)_{i\in I}$ of Definition \ref{def:ml} as the diamonds $(D_\edge)_{\edge\in\edges}$ around the edges.
The right part of Figure \ref{fig:ml} illustrates the construction of this partition.
Since $\Pi_\disc v$ is linear and $\nabla(\Pi_\disc v)=\nabla_\disc v$
in each cell, and since $\Pi_{\disc^\ml}v=\Pi_\disc v(\centeredge)$
on $D_\edge$, an order one Taylor expansion immediately provides Estimate \eqref{est:ml}.
Property $(\mathcal{P})$ for the mass-lumped non-conforming $\P_1$ gradient discretisation is
then a consequence of Theorem \ref{th:ml}.

\subsection{Mixed finite element $\RTk$ schemes} \label{sec:rtkmfe}

The $\RTk$ method is the only one presented here for which the gradient discretisation
is only constructed for $p=2$. All the other gradient discretisations are constructed
for any $p\in (1,\infty)$.

Let $\polyd$ be a simplicial discretisation of $\O$ as for the non-conforming
$\P_1$ scheme.
We fix $k\in \N$ and introduce the following spaces 
\begin{align*}
\bfV_h ={}& \{\bfw \in (L^2(\Omega))^d  \,:\, \bfw|_K \in \RTk(K), \ \forall K \in \mesh\},
\quad &\bfV_h^{\rm div} ={}&  \bfV_h  \cap H_{\rm div}(\Omega),\\
W_h ={}& \{p \in L^2(\Omega)\,:\,  p|_K \in \P_k(K), \ \forall K \in \mesh\},\quad
&M_h^0 ={}&  \left\{\mu\,:\,\bigcup_{\edge \in \edges} \overline \edge \to \R, \mu|_\edge \in \P_k(\edge), \mu|_{\partial \Omega} =0 \right\},
\end{align*}
where 
\begin{itemize}
\item $ \P_k(K)$ is the space of polynomials of $d$ variables on $K$ of degree less than or equal to $k$.%; its  dimension is $\left(\substack{ k+d \\ d }\right)$.
\item $ \P_k(\edge)$ is the space of polynomials of $d-1$ variables on $\edge$ of degree less than or equal to $k$.%; its  dimension is  $ \left(\substack{ k+d-1 \\ d-1 }\right)$.
\item $\RTk(K)=\P_k(K)^d + \x \overline{\P}_k(K)$
is the Raviart-Thomas space of order $k$ defined on $K$.
Here, $\overline{\P}_k(K)\subset \P_k(K)$ is the
set of homogeneous polynomials of degree $k$.
%; its dimension is $d \left(\substack{ k+d \\ d }\right) +   \left(\substack{ k+d-1 \\ d-1 }\right)$ 
\end{itemize}
We construct a gradient discretisation (for $p=2$ only) inspired by the dual mixed finite element formulation of Problem \eqref{pblin} as in \cite{arnold1985mixed}. 
Assuming that $A$ is constant in each cell $K$, the dual mixed finite element formulation
of \eqref{pblin} is
\be
\begin{array}{l}
\dsp (\bv ,q,\lambda) \in   \bfV_h  \times W_h \times M_h^0, \\  
\dsp \int_K \bw(\x)\cdot A(\x)^{-1} \bv(\x) \d\x - \int_K q(\x)\div\bw(\x)\d\x  + \sum_{\edge \in \edgescv}\int_\edge \lambda(\x) \bw|_K(\x) \cdot \ncvedge \d s(\x) =0,\;\forall \bw\in \bfV_h , \\
\dsp \int_K \psi(\x)\div \bv(\x)\d\x=\int_K \psi(\x)f(\x)\d\x,\; \forall \psi \in  W_h, \forall K \in \mesh, \\
\dsp \int_\edge \mu (\x)  \bv|_K(\x) \cdot \ncvedge \d s(\x) + \int_\edge \mu(\x)  \bv|_L(\x) \cdot \bfn_{L,\sigma} \d s(\x) =0, \; \forall \edge\in \edgesint\mbox{ with $\mesh_\edge=\{K,L\}$},  \, \forall \mu \in M_h^0.
\end{array}
\label{mixtgs:eq:ab}
\ee
We again refer to Definition \ref{def:LL} for the construction of $\disc=(X_{\disc,0},\nabla_\disc,\Pi_\disc)$. 
We consider $(\psi_i)_{i\in I^W}$ the standard basis of $W_h$, and
$(\xi_j)_{j\in I^M}$ the standard basis of $M_h^0$. 
These two standard bases are respectively associated to the set of points $I^W$ located in the cells and the set of points $I^M$ located on the faces of the cells.
These points are defined in a similar way as \eqref{def:vertexpk}.
\begin{enumerate}
 \item The set of geometrical entities attached to the dof is $I = I^W\cup I^M$, and the set of approximation points is also $S=I^W\cup I^M$.
 Then  $\Ii = I^W\cup I^M_{\rm int}$ and $\Ib =  I^M_{\rm ext}$, where $I^M_{\rm int}=I^M\cap \Omega$ and $I^M_{\rm ext}=I^M\cap\partial\Omega$.
 The partition of $\O$ is given by $\partition = \mesh$. 
 We denote by $I^W_K$ the set of all  points of $I^W$ which are in $K$, and by $I^M_{\edge}$ the set of all points of $I^M$ which are in $\edge$. 
 Then, for all $U=K\in\partition$, $I_U = I^W_K \cup \bigcup_{\edge\in\edgescv}I^M_{\edge} $.
 \item The reconstruction \eqref{ll:piD} is applied with $\alpha_i= \psi_i$ for all $i\in I^W_K$ and $\alpha_i = 0$ for all $i\in \bigcup_{\edge\in\edgescv} I^M_{\edge}$.
  This leads to
 \[
  \forall v\in X_{\disc,0},\ \forall \x\in \O,\  \Pi_\disc v(\x) = \sum_{i\in I^W} v_i \psi_i(\x).
 \]
 \item For all $K\in\mesh$, the linearly exact gradient reconstruction is locally defined in $K$ by: $\gr_K v$ is the function such that $A \gr_K v \in \RTk(K)$ and
 \begin{multline*}
 \forall\bw\in \RTk(K)\,,\; \int_K \bw(\x)\cdot \gr_K v(\x) \d\x + \int_K  \left(\sum_{i\in I^W_K} v_i \psi_i(\x)\right)\div\bw(\x)\d\x \\ - \sum_{\edge \in \edgescv}\int_\edge \left(\sum_{j\in I^M_\edge} v_j \xi_j(\x)\right) \bw|_K(\x) \cdot \ncvedge \d\gamma(\x) =0.  
 \end{multline*}
 \item  
 If $\Vert \nabla_\disc u \Vert_{L^p(\Omega)^d}=0$ then $\gr_K u= 0$, and $(\bv,q,\lambda)$ defined by $ \bv|_K= A \gr_K u$, $q= \sum_{i\in I^W} u_i \psi_i$, $\lambda = \sum_{j\in I^M} u_j \xi_j$ is a solution to \eqref{mixtgs:eq:ab} with $f = 0$.
 The invertibility of this system implies that $q=0$ and $\lambda=0$, and therefore $u_i= 0$ for $i \in I^W$ and $u_j = 0$ for $j \in I^M$.
  Therefore $\Vert \nabla_\disc \cdot \Vert_{L^p(\Omega)^d}$ is a norm on $X_{\disc,0}$.

\end{enumerate}
The proof of the equivalence between the corresponding gradient scheme \eqref{gslin}
and the Arnold-Brezzi mixed hybrid formulation \eqref{mixtgs:eq:ab} is found in \cite{egh2015rtk}, along with the proof of the property $(\mathcal{P})$ for a regular sequence of polytopal meshes in the sense of Definition \ref{def:regpolyd}.  Note that, in the case $k=0$, the property of piecewise constant reconstruction holds.

\subsection{Multi-point flux approximation MPFA-O scheme}\label{sec:mpfa}

We consider in this section two particular cases of the MPFA-O scheme
\cite{aav-96-dis}. They are based on particular polytopal meshes of $\O$ in the sense of Definition \ref{def:polymesh}: Cartesian for the first case, and simplicial for the second case. In each of these cases, for $K\in\mesh$ we let $\x_K = \overline{\x}_K$ be the centre of
gravity of $K$ and we define a partition $(V_{K,\vertex})_{\vertex\in\vertices_K}$ of $K$
the following way (see Figure \ref{fig:mpfa}):
\begin{itemize}
\item \emph{Cartesian meshes}: $V_{K,\vertex}$ is the parallelepipedic polyhedron whose faces
are parallel to the faces of $K$ and that has $\x_K$ and $\vertex$ as vertices. We define, for $\edge\in\edges$ and $\vertex\in\vertices_\edge$,  $\x_{(\edge,\vertex)} = \centeredge$ (note that these points are identical for all $\vertex\in\vertices_\edge$, see Remark \ref{rem:identicalpoints}).
\item \emph{Simplicial meshes}: we denote by $(\beta_{\vertex}^K(\x))_{\vertex\in\vertices_K}$
the barycentric coordinates of $\x$ in $K$ (that is
$\x-\xcv = \sum_{\vertex\in\vertices_K}\beta_{\vertex}^K(\x) (\vertex' - \xcv)$,
$\beta_{\vertex}^K(\x)\ge 0$ and $\sum_{\vertex'\in\vertices_K}\beta_{\vertex'}^K(\x) = 1$)
and we define $V_{K,\vertex}$ as the set of $\x\in K$ whose
barycentric coordinates $(\beta_{\vertex'}^K(\x))_{\vertex'\in\vertices_K}$ satisfy $\beta_\vertex^K(\x) > \beta_{\vertex'}^K(\x)$ for all $\vertex'\in \vertices_K\setminus\{\vertex\}$.
For $\edge\in\edges$ and $\vertex\in\vertices_\edge$,
$\x_{(\edge,\vertex)}$ is the point of $\edge$ whose  barycentric coordinates in  $\edge$ are $\beta_{\vertex'}^{\edge}(\x_{(\edge,\vertex)}) = 1/(d+1)$ for all $\vertex'\in\vertices_\edge\setminus\{\vertex\}$,
and $\beta_\vertex^{\edge}(\x_{(\edge,\vertex)}) = 2/(d+1)$.
\end{itemize}

\begin{figure}[!ht]
\centering
\input {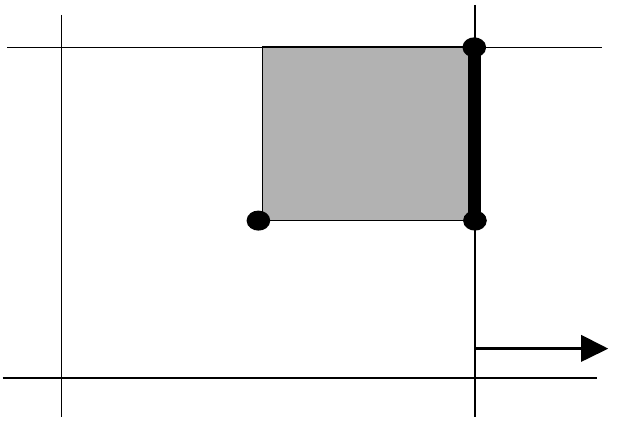_t}\hspace*{0.1\linewidth}
\input {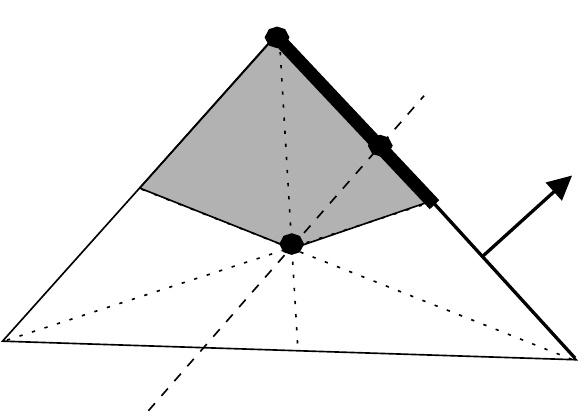_t}
\caption{Notations for MPFA-O schemes defined on Cartesian (left) and simplicial (right) meshes.}
\label{fig:mpfa}
\end{figure}

We then follow the notations in Definition \ref{def:LL} to
construct the MPFA-O gradient discretisations in both cases:
\begin{enumerate}
 \item The set of geometrical entities attached to the dof is $I = \mesh \cup \{(\edge,\vertex)\,:\,\edge\in\edges,\ \vertex\in \vertices_\edge\}$ and the family of approximation points is
$S = ((\xcv)_{K\in\mesh},(\x_{(\edge,\vertex)})_{\edge\in\edges,\,\vertex\in \vertices_\edge})$.
We define $\Ii=\mesh\cup \{(\edge,\vertex)\,:\,\edge\in\edgesint,\ \vertex\in \vertices_\edge\}$ and $\Ib=\{(\edge,\vertex)\,:\,\edge\in\edgesext,\ \vertex\in \vertices_\edge\}$. 
The partition is $\partition = (V_{K,\vertex})_{K\in\mesh,\,\vertex\in\verticescv}$.
 For any $U = V_{K,\vertex}$, we set $\edges_{\cv,\vertex} = \{\edge\in\edgescv\,:\,\vertex\in\vertices_\edge\}$ and $I_U = \{K\}\cup \{(\edge,\vertex)\,:\,\edge\in\edges_{\cv,\vertex}\}$.
 \item The functions $\alpha_i$ are defined  by $\alpha_i = 1$ for $i=K$ and $\alpha_i = 0$ for $i=(\edge,\vertex)$, which means that
 \[
  \forall v\in X_{\disc,0}\,,\ \forall K\in\mesh\,,\ \forall \x\in K\,,\  \Pi_\disc v(\x) = v_K.
 \]
 \item Setting $\edge_{\vertex}=\overline{V_{K,\vertex}}\cap\edge$, the gradient reconstruction on $U = V_{K,\vertex}$ is
 \[
 \forall \x\in V_{K,\vertex}\,,\  \gr_{V_{K,\vertex}} v(\x) = \frac 1 {|V_{K,\vertex}|} \sum_{\edge\in\edges_{\cv,\vertex}} |\edge_{\vertex}| 
 (v_{(\edge,\vertex)} - v_K) \ncvedge.
 \]
 \item As in the case of the non-conforming $\P_1$ element, the fact that $\Vert \nabla_\disc \cdot \Vert_{L^p(\Omega)^d}$ is a norm on $X_{\disc,0}$ is deduced from the injectivity of the mapping $\emb$ defined in the course of the proof of the property $(\mathcal{P})$ below.
   
\end{enumerate}
 
For such a gradient discretisation, the gradient scheme \eqref{gslin} is a finite volume scheme. Indeed,
by selecting a test function with only non-zero value $v_K = 1$ in \eqref{gslin}, we obtain the flux balance
 \begin{equation}
 \sum_{\edge\in\edgescv}\sum_{\vertex\in\vertices_\edge} F_{K,\edge,\vertex}(u) = \int_K f(\x) \d \x, \mbox{ where }F_{K,\edge,\vertex}(u) = \int_{\edge_{\vertex}} \gr_{V_{K,\vertex}} u(\x)\cdot\ncvedge \d \gamma(\x). \label{eq:mpfacv}
 \end{equation}
Selecting a test function with only non-zero value $v_{(\edge,\vertex)} = 1$ in \eqref{gslin}
leads to the conservativity of the fluxes:
 \begin{equation}
  F_{K,\edge,\vertex}(u) + F_{L,\edge,\vertex}(u)= 0\hbox{ for all $\edge\in \edgesint$ with
$\mesh_\edge=\{K,L\}$, and all $\vertex\in\vertices_\edge$}.\label{eq:mpfacons}
 \end{equation}
We can also locally express the degree of freedom $u_{(\edge,\vertex)}$ in terms
of $(u_K)_{K|\vertex\in\vertices_K}$. For a given $\vertex\in\vertices$ 
this is done by solving the local linear system issued from \eqref{eq:mpfacons} written for all
$\edge$ such that $\vertex\in\vertices_\edge$. After these local eliminations of $u_{(\edge,\vertex)}$,
the resulting linear system only involves the cell unknowns.
This discretisation of \eqref{pblin} by writing the balance and
conservativity of half-fluxes $F_{K,\edge,\vertex}$, constructed
via a local linearly exact gradients, is identical to the construction
of the MPFA-O method in \cite{aav-96-dis}. This demonstrates that the
gradient discretisation constructed above indeed gives the MPFA-O method
when used in the gradient scheme \eqref{gslin}.

\begin{remark}
The identification of MPFA-O schemes as gradient schemes is, to our knowledge, restricted to the two cases considered in this section (Cartesian and simplicial meshes). 
In the case of more general meshes for the approximation of \eqref{pblin}, the discrete gradient defined by the MPFA-O scheme can be used in the finite volume scheme \eqref{eq:mpfacv}-\eqref{eq:mpfacons}; however, the gradient scheme \eqref{gslin} built upon this discrete gradient cannot be expected to converge, since the corresponding gradient discretisation may fail to be limit-conforming and coercive.
\end{remark}

The regularity of the MPFA-O gradient discretisations is defined as the regularity of $\polyd$ (see Definition \ref{def:regpolyd}).
Although references \cite{eymard2012appr,eymard2015appl} include proofs of the property $(\mathcal{P})$, let us show how the generic tools presented in Section \ref{sec:dfa}
enable very quick proofs of this result.

\begin{proof}[Proof of the property $(\mathcal{P})$ for MPFA-O gradient
discretisations]

We drop the indices $m$ for sake of legibility. 
We consider the polytopal mesh $\polyd = (\mesh,\edges',\centers,\vertices')$  where the sets $(\mesh,\centers)$ are those of the original polytopal mesh,  $\edges'=\{\edge_{\vertex}\,;\, \edge\in\edges,\ \vertex\in\vertices\} $, and $\vertices'$ is the set of all vertices of the elements of $\edges'$.
We define a control of $\disc$ by $\polyd$ (in the sense of Definition \ref{def:inhdfa})
as the isomorphism $\emb:X_{\disc,0} \longrightarrow X_{\polyd,0}$ given by
$\emb(u)_K = u_K$ and $\emb(u)_{\edge_\vertex} = u_{(\edge,\vertex)}$.
We observe that
\[
 \int_K |\grad_\disc u(\x)|^p\d\x \ge \cter{cte:mpfa} \sum_{\edge\in\edgescv}\sum_{\vertex\in\vertices_\edge} |\edge_\vertex|\dcvedge \left|\frac {u_{(\edge,\vertex)} - u_K} {\dcvedge}\right|^p,
\]
with $\ctel{cte:mpfa} = 1$ for parallelepipedic meshes, and  $\cter{cte:mpfa}>0$ depends on
an upper bound of the regularity  of the mesh for simplicial meshes.
Therefore $\Vert\grad_\disc u\Vert_{L^p(\O)^d}^p  \ge \cter{cte:mpfa}\Vert \emb(u)\Vert_{\polyd,0,p}^p$ and \eqref{cnt:poly1} is proved. Since 
$\Pi_{\disc} u = \Pi_{\polyd}\emb(u)$, we get $\omega^\Pi(\disc,\polyd,\emb) =0$,
which proves \eqref{cnt:poly2}. Finally, we have 
\[
 \int_K \grad_\disc u(\x)\d\x  = \sum_{\edge\in\edgescv}\sum_{\vertex\in\vertices_\edge} |\edge_\vertex|(u_{\edge,\vertex} - u_K)\ncvedge =
\sum_{\edge'\in\edges'_K}|\edge'|(\emb(u)_{\edge'}-\emb(u)_K)\n_{K,\edge'}=
 \mcv\;\grad_\polyd \emb(u)_{|K}.
\]
This shows that $\omega^\nabla(\disc,\polyd,\emb)=0$, which
establishes \eqref{cnt:poly3}.
Proposition \ref{prop:inhdfa} therefore shows that $(\disc_m)_{m\in\N}$ is coercive in the sense of Definition \ref{def-coer}, limit-conforming in the sense of Definition \ref{def-limconf}, and compact in the sense of  Definition \ref{def-comp}.

It is proved in \cite{eymard2012appr,eymard2015appl} that the definitions of the  approximation points $S$ give the LLE property in both the Cartesian and simplicial cases. 
Hence, the consistency of $(\disc_m)_{m\in\N}$ follows from Proposition \ref{prop:LL}. \end{proof}

\subsection{Discrete duality finite volumes}\label{sec:ddfv}

The principle of discrete duality finite volume (DDFV) schemes 
\cite{her-03-app,dom-05-fin,boy-08-fin,her-07-app,and-08-fin,cou-10-dis} is to design discrete divergence and gradient operators that are linked in duality through a discrete Stokes formula. 
Since discrete operators and an asymptotic Stokes
formula are at the core of gradient schemes (see Definitions \ref{defgraddisc}
and \ref{def-limconf}), it is not a surprise that they should
contain DDFV methods. This was already noticed without proof in \cite{eym-12-sma}; we give here a precise construction and proof of this result in the 3D case.
Note that the same tools can be used for the 2D case \cite{koala}.

Two 3D DDFV versions have been developed: the CeVe-DDFV, which uses cell and vertex unknowns
\cite{her-09-fin,cou-09-3d,and-10-dis}, and the CeVeFE-DDFV, which uses
cell, vertex, faces and edges unknowns \cite{cou-10-dis,cou-11-3d}.
The coercivity properties of the two methods differ:
the CeVe-DDFV does not seem to be unconditionally coercive on generic
meshes, whereas the CeVeFE-DDFV scheme is unconditionally coercive \cite{review}.
We show here that this latter method is a gradient scheme. To do so, we
introduce a gradient discretisation on a general octahedral mesh (possibly
including degenerate cells), and we show that when the octahedral cells of this mesh
are the ``diamond cells'' of a CeVeFE-DDFV method, the
gradient scheme corresponding to this gradient discretisation is the
CeVeFE-DDFV scheme. 
The standard CeVeFE-DDFV scheme corresponds to hexahedral meshes, which can be seen
as degenerate octahedral meshes (each cell has six vertices, but three of them are aligned
so only six physical faces are apparent). Although it was known to specialists that,
as done here, the construction could be performed on general octahedral meshes
(and that the corresponding DDFV method satisfies the discrete duality formula \cite{ABprivate}),
this was not reported before. It should also be noticed that our presentation gives
a complete description of the CeVeFE-DDFV method using only one mesh instead of the
usual four meshes. As shown in \cite[Section IX.B]{and-07-dis} for 2D DDFV methods,
the other three meshes can be reconstructed from the ocatahedral (``diamond'') mesh.
However these three meshes are not used to construct the method here.
The vision of DDFV methods based solely on one mesh (the ``diamond'' mesh, or octahedral mesh here) actually corresponds to the vision adopted in the implementation of the schemes.

Let %\label{ddfv:polyd} 
$\polyd=(\mesh,\edges,\centers,\vertices)$ be a polytopal mesh of $\O$ in the sense of Definition \ref{def:polymesh}, such that the elements of $\mesh$ are  octahedra (open polyhedra with eight triangular faces and six vertices, not necessarily convex; five vertices may be coplanar), and the element of $\edges$ are the triangular faces of the elements of $\mesh$. Each $\edgescv$ has 8 elements, each $\verticescv$  has $6$ elements, and each $\vertices_\edge$ has $3$ elements. 
For any $K\in \mesh$, the centre of $K$ is defined by $\xcv = \frac 1 6 \sum_{\vertex \in \verticescv} \vertex$.
We use Definition \ref{def:LL} to construct an ``octahedral'' gradient discretisation
$\disc=(X_{\disc,0},\nabla_\disc,\Pi_\disc)$ (see Figure \ref{fig:cevefe} for some
notations): 

\begin{enumerate}

\item The set of  geometrical entities attached to the dof is $I=\vertices$,  the set of approximation points is $S=\vertices$.
We let $\Ii=\vertices\cap \Omega$ and $\Ib=\vertices\cap \partial\Omega$,
and the partition is $\partition = \mesh$. For $U=K\in\partition$ we define $I_U = \verticescv$.

\item For $K\in\mesh$ and $\vertex\in\verticescv$, we denote by $V_{K,\vertex}$ the octahedron formed by $\x_K$, $\vertex$, and the four other vertices of $K$ that share a face of $K$ with $\vertex$. 
We then use the reconstruction \eqref{ll:piD} with the functions $(\alpha_\vertex)_{\vertex\in I_K}$ defined by
\be
  \forall \x\in K,\; \forall \vertex\in\vertices_K\,,\;\alpha_\vertex(\x) = \frac 1 3 \chi_{V_{K,\vertex}}(\x),
  \label{defpidiscddfv}
\ee
where $\chi_{V_{K,\vertex}}$ denotes the characteristic function of $V_{K,\vertex}$. This leads to
\be\label{cevefe:PiD}
\forall u\in X_{\disc,0}\,,\;\forall K\in\mesh\,,\forall\x\in K\,,\;\Pi_\disc u(\x) = \frac 1 3 \sum_{\vertex\in\verticescv} u_\vertex \chi_{V_{K,\vertex}}(\x).
\ee

\item For $K\in\mesh$ and $u\in  X_{\disc,0}$, the cell gradient is defined by
\be
\forall \x\in K,\ \gr_K u(\x) = \frac {1} {\mcv} \sum_{\edge\in\edgescv} \medge u_\edge\ncvedge\,,
\quad\mbox{ where }u_\sigma=\frac 1 3 \sum_{\vertex\in\vertices_\edge}u_\vertex
\mbox{ for all $\edge\in\edgescv$}.
\label{defgradpyddfv}\ee

\item The proof that $\Vert \nabla_\disc \cdot \Vert_{L^p(\Omega)^d}$ is a norm on $X_{\disc,0}$ is done in Lemma \ref{lem:gradddfv}.

\end{enumerate}

 \begin{figure}[!ht]
\centering
{\scalebox{.35}{\input{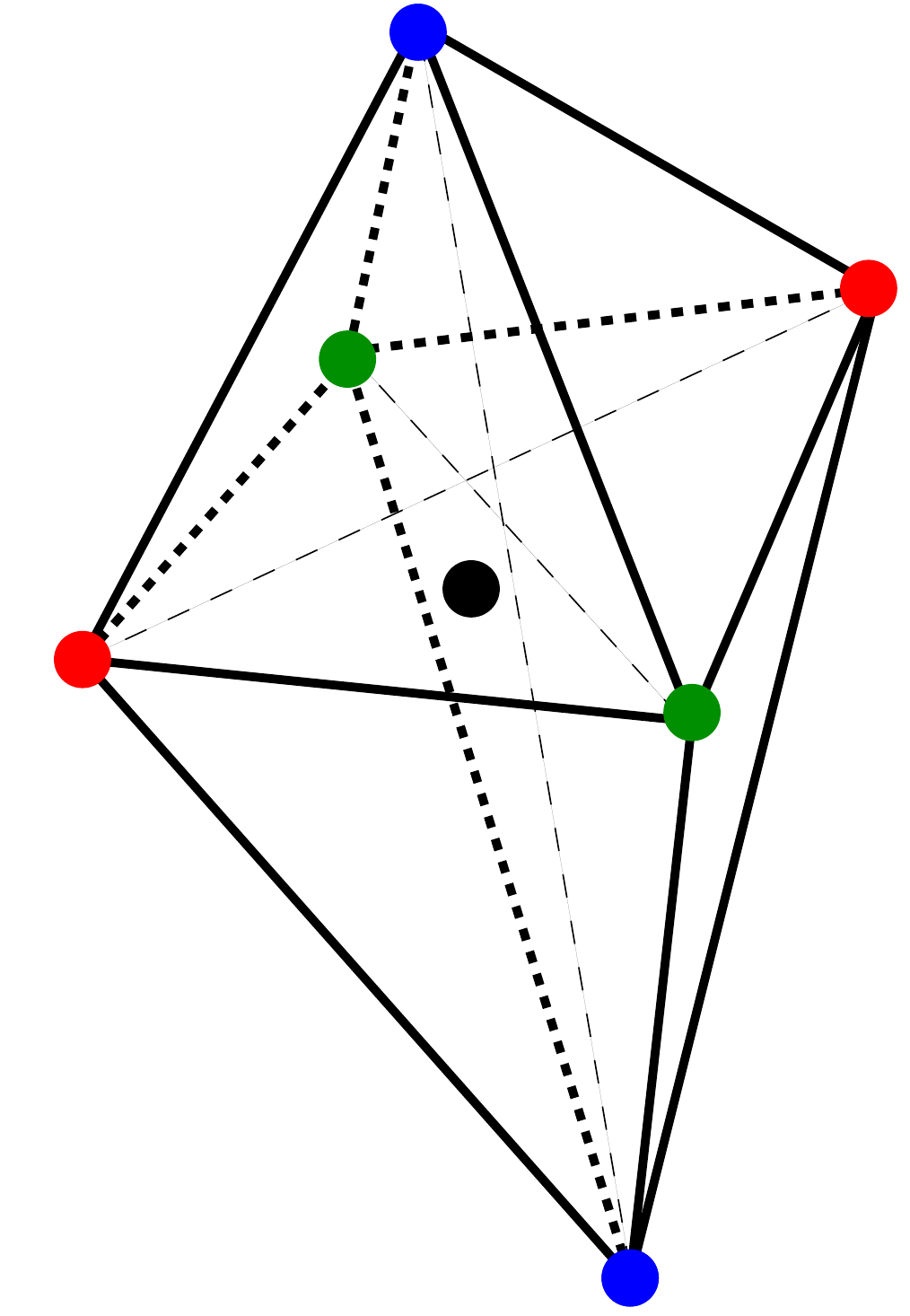_t}}\hspace*{0.05\linewidth}\scalebox{.35}{\input{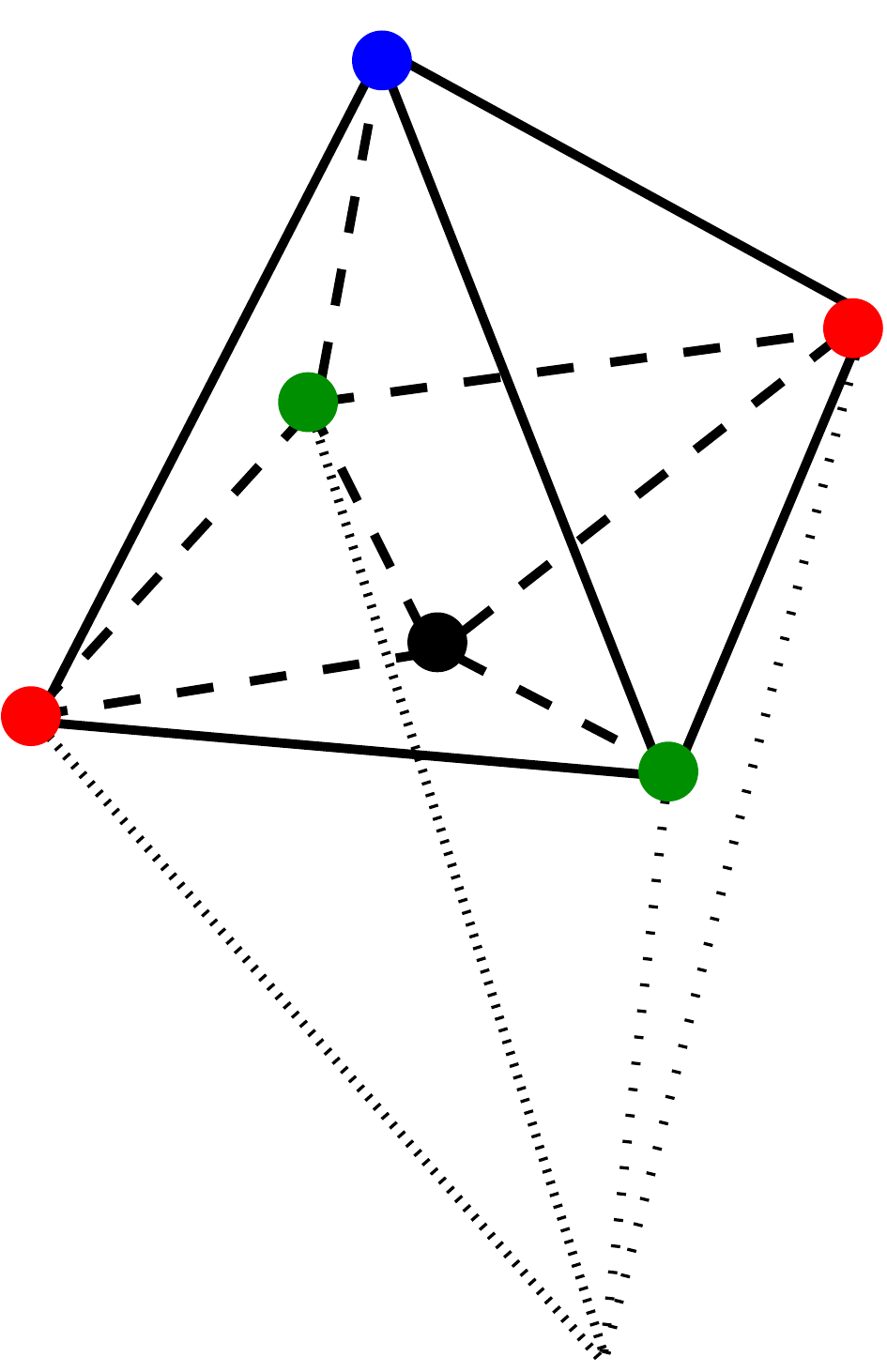_t}}
\hspace*{0.05\linewidth}\scalebox{.35}{\input{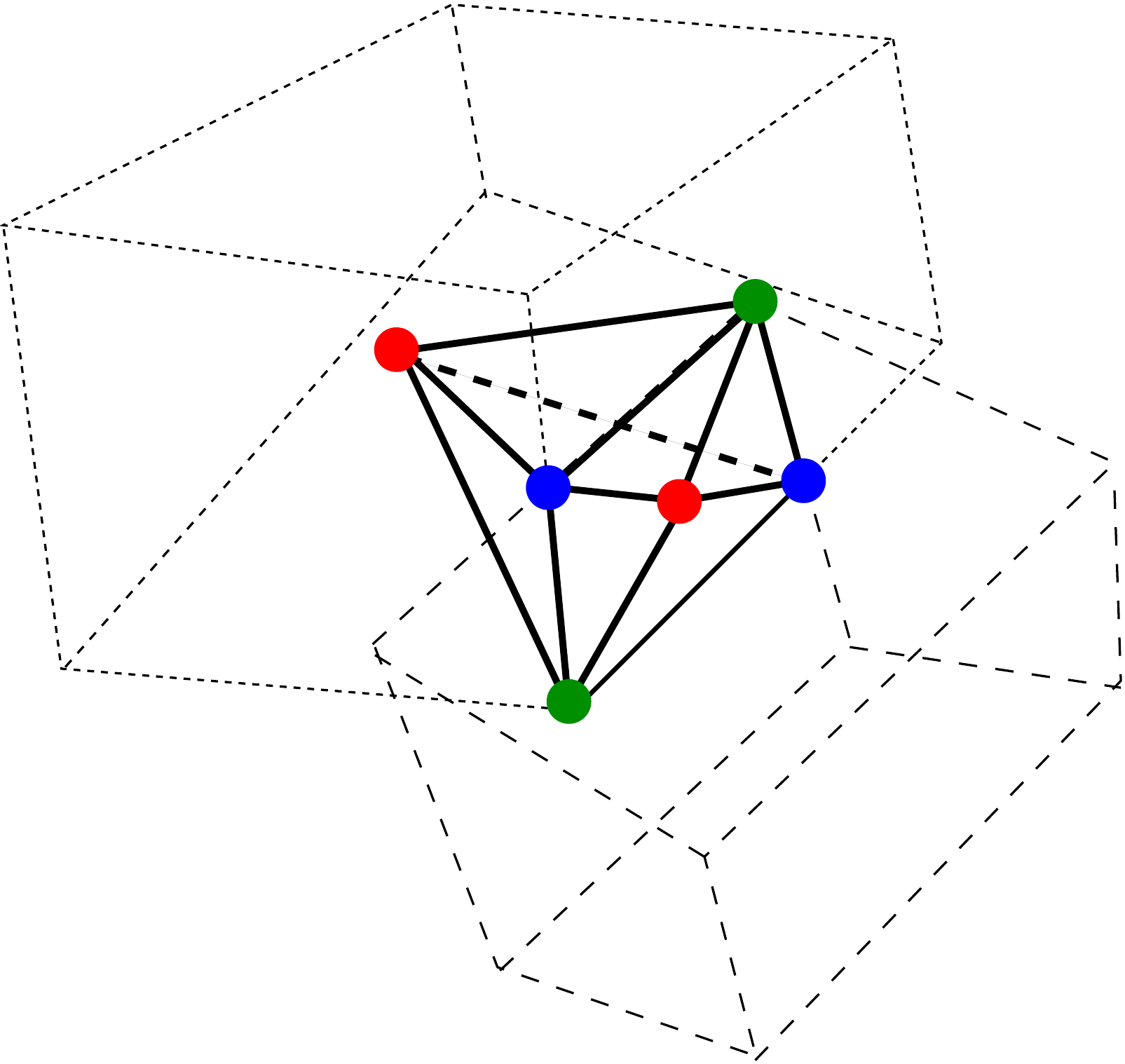_t}}}  
\caption{\label{fig:cevefe}Left: octahedral cell $K$ for the CeVeFE-DDFV scheme.
Center: illustration of $V_{K,\vertex}$.
Right: construction of a degenerate octahedron from a non-conforming hexahedral mesh
in a heterogeneous medium ($CDEF$ is the intersection of the boundaries of two non-matching
hexahedral cells).}
\end{figure}

\begin{remark}[Octohedra and heterogeneous media]\label{ddfv:heter} 
An octahedral mesh of $\O$ can be obtained in the case where the domain $\Omega$ is the disjoint union of star-shaped hexahedra, considering the octahedra obtained from the centres of neighbourhing hexahedral cells, and the four vertices of their interface.
This also works for non-conforming hexahedral meshes, for which interfaces between cells may be different from the physical faces of the cells.

In the case of a heterogeneous media, in which the material properties (e.g.
the permeability $A$ in \eqref{pblin}) are constant inside each hexahedral cell but may be discontinuous
from one cell to the other, it is usually preferable to construct octahedral cells
that are compatible with these heterogeneities (i.e. such that the material properties are
constant inside each octahedron). This prevents from introducing a non-physical
average of $A$ in the gradient scheme \eqref{gslin}, which would lead to a loss
of accuracy of the approximate solutions. Such an octahedral
mesh can be constructed fairly easily as illustrated in Figure \ref{fig:cevefe} (right).
Each of these octahedra is built from the centre of an hexahedral cell,
the four vertices of the interface between this cell and a neighbouring
hexahedral cell, and a point selected on this interface. This interface
need not be planar.
\end{remark}

\begin{remark}[Reconstruction operator] The choice \eqref{defpidiscddfv} of $\Pi_\disc$ is driven by our desire to construct a gradient discretisation whose gradient scheme is exactly the
CeVeFE-DDFV method, for particular octahedral meshes. This choice ensures that
the discrete duality formula holds true but, as explained in Section \ref{sec:deftools}
(and as already noticed in \cite[Appendix C]{ABH13} for CeVe-DDFV methods),
it is not adapted to certain non-linear models.
% Mass-lumping is necessary to recover
%a piecewise constant reconstruction suitable for these models.
% A mass-lumped octahedral
%gradient discretisation can be obtained by considering in Definition \ref{def:pwrec}
%the set $B=\vertices$ and, for example, the Voronoi tesselation $(V_i)_{i\in B}$ generated by $\vertices$.
% We could as well take other choices for $\Pi_\disc$, e.g. the reconstruction
% such that $\Pi_\disc u = \frac{1}{6}\sum_{\vertex\in\verticescv}u_\vertex$ in $K$,
% and the corresponding gradient discretisation would still satisfy Property $(\mathcal{P})$. 
% Contrary to the choice \eqref{cevefe:PiD}, this reconstruction does not apparently lead to a discrete duality formula, which is the core of DDFV methods. 
% However, as mentioned in Section \ref{sec:deftools} it enables the use of the octahedral gradient
% discretisation on models with nonlinear reaction terms. 
% This was noticed for CeVe-DDFV methods in \cite[Appendix C]{ABH13}.
\end{remark}

The following lemma proves that the previous construction gives an LLE gradient discretisation.
It will also prove useful to show that this gradient discretisation gives back the CeVeFE-DDFV method,
and to establish Property $(\mathcal{P})$.

\begin{lemma}\label{lem:gradddfv}
Let $\disc$ be the octahedral gradient discretisation 
defined as above. For any $u\in X_{\disc,0}$ and any $K\in\mesh$,
the constant discrete gradient $\gr_K u$ is characterised by
\be\label{charddfv}
\mbox{For all opposite vertices $(\vertex,\vertex')$ of $K$, }
\gr_K u\cdot(\vertex-\vertex') = u_\vertex-u_{\vertex'}.
\ee
As a consequence,  $\Vert \nabla_\disc \cdot \Vert_{L^p(\Omega)^d}$ is a norm on $X_{\disc,0}$.

\end{lemma}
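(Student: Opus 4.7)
My plan is to identify the cell gradient $\gr_K u$ from \eqref{defgradpyddfv} with the mean over $K$ of the gradient of a carefully chosen piecewise affine extension of the vertex data, and then to choose this extension so that its directional derivative along any given diagonal of $K$ is constant and takes an explicit value. This will yield \eqref{charddfv} one opposite pair at a time; the norm property then follows from the linear independence of the three diagonal directions.

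Denote the three opposite pairs of $K$ by $(\vertex_i,\vertex_i')_{i=1,2,3}$, and fix $i\in\{1,2,3\}$ with $\{j,k\}=\{1,2,3\}\setminus\{i\}$. The four tetrahedra
\[
T_{W,W'}=\mathrm{conv}(\vertex_i,\vertex_i',W,W'),\qquad W\in\{\vertex_j,\vertex_j'\},\;W'\in\{\vertex_k,\vertex_k'\},
\]
share the diagonal edge $[\vertex_i,\vertex_i']$ and triangulate the convex octahedron $K$. I let $U:K\to\R$ be the continuous, piecewise affine function on this triangulation defined by $U(\vertex)=u_\vertex$ for every $\vertex\in\verticescv$; continuity across the interior triangles is automatic, since any two neighbouring tetrahedra meet on a triangle whose three vertices all lie in $\verticescv$, so $U$ restricted to such a triangle is the unique affine interpolant of the prescribed values.

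A componentwise application of the divergence theorem, combined with the fact that $U$ on each face $\edge\in\edgescv$ is the affine interpolant of the three vertex values in $\vertices_\edge$ and therefore has mean $\frac{1}{3}\sum_{\vertex\in\vertices_\edge}u_\vertex = u_\edge$, will give
\[
\int_K \nabla U\,\d\x = \sum_{\edge\in\edgescv}\int_\edge U\,\n\,\d s = \sum_{\edge\in\edgescv}|\edge|\,u_\edge\,\ncvedge = |K|\,\gr_K u.
\]
On each tetrahedron $T_{W,W'}$ the segment $[\vertex_i,\vertex_i']$ is an edge and $U|_{T_{W,W'}}$ is affine, so
\[
\nabla U\cdot(\vertex_i-\vertex_i') = U(\vertex_i)-U(\vertex_i') = u_{\vertex_i}-u_{\vertex_i'}
\]
there, a value independent of $(W,W')$ and hence constant on $K$. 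Averaging over $K$ and dividing by $|K|$ yields $\gr_K u\cdot(\vertex_i-\vertex_i') = u_{\vertex_i}-u_{\vertex_i'}$, which is \eqref{charddfv} for the pair $(\vertex_i,\vertex_i')$; running the same argument for $i=1,2,3$ completes the characterisation.

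For the norm property, the six vertices of the non-degenerate octahedron $K$ span $\R^3$, which forces the three diagonal vectors $(\vertex_i-\vertex_i')_{i=1,2,3}$ to be linearly independent. Hence \eqref{charddfv} is an invertible $3\times 3$ linear system for $\gr_K u$, so $\gr_K u=0$ if and only if $u_{\vertex_i}=u_{\vertex_i'}$ for every opposite pair of $K$. If $\nabla_\disc u\equiv 0$ on $\Omega$, these equalities hold in every cell; propagating them across cells through their shared vertices and invoking $u_\vertex=0$ for $\vertex\in\Ib$ then forces $u\equiv 0$. The main obstacle is the identification $|K|\,\gr_K u = \int_K\nabla U\,\d\x$, which rests on the identity $\int_\edge U\,\d s = |\edge|\,u_\edge$, itself a consequence of the mean value property of affine functions on triangles; once this is in hand the remaining step is the linear-algebra observation that $U$ is affine along the diagonal on each piece of the triangulation, and the possibly non-convex or degenerate cases allowed by the definition are handled by the same argument after using a signed tetrahedral decomposition of $K$.
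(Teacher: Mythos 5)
Your route---realising $\gr_K u$ as the mean over $K$ of the gradient of a continuous piecewise affine interpolant subordinate to the triangulation of $K$ by the four tetrahedra sharing a chosen diagonal, then reading off the directional derivative along that diagonal---is genuinely different from the paper's. The paper instead regroups \eqref{defgradpyddfv} vertex by vertex, computes $\sum_{\edge\in\edgescv\,|\,\vertex\in\vertices_\edge}\medge\ncvedge$ explicitly as a cross product of the two diagonals not passing through $\vertex$, and obtains a closed formula for $\gr_K u$ in terms of $\Delta_K=\det(\overrightarrow{AB},\overrightarrow{CD},\overrightarrow{EF})$, from which \eqref{charddfv} is read off. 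For a convex, non-degenerate octahedron your argument is correct and arguably more conceptual: the trace identity $\int_\edge U\,\d s=|\edge|\,u_\edge$ together with the divergence theorem does give $\int_K\nabla U\,\d\x=|K|\,\gr_K u$, each face of $K$ is a face of exactly one tetrahedron of the decomposition, and $U$ is affine along the chosen diagonal on each piece. Your concluding induction for the norm property is essentially the paper's.

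There is, however, a genuine gap in precisely the cases this section needs: the cells are allowed to be non-convex and to have five coplanar vertices, and the standard CeVeFE-DDFV diamond cell is a \emph{degenerate} octahedron with $E\in[A,B]$. For such a cell, take the pair $(\vertex_i,\vertex_i')=(A,B)$: the tetrahedra $\mathrm{conv}(A,B,C,E)$ and $\mathrm{conv}(A,B,D,E)$ are flat, so $U$ cannot be defined on them as the interpolant of four nodal values, and---worse---the faces $ACE$, $BCE$, $ADE$, $BDE$ of $K$ are faces of exactly these flat tetrahedra. The interpolant built on the two surviving tetrahedra takes at $E$ the value obtained by interpolating $u_A$ and $u_B$ along $[A,B]$, not $u_E$, so $\int_\edge U\,\d s\neq|\edge|\,u_\edge$ on those four faces and the key identity $\int_K\nabla U\,\d\x=|K|\,\gr_K u$ fails. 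A ``signed tetrahedral decomposition'' does not repair this, because the failure lies in the boundary trace of $U$ on $\partial K$, not in the signs or overlaps of the volumes; what would work is a perturbation argument (both sides of \eqref{charddfv} depend continuously on the vertex positions as long as $|K|>0$), but that requires showing the admissible degenerate and non-convex configurations are limits of configurations where your decomposition is valid, and you have not supplied this. A secondary point: linear independence of the three diagonals does not follow from the six vertices spanning $\R^3$ (six points can affinely span $\R^3$ while the three difference vectors of a chosen pairing are all parallel); the correct reason, implicit in the paper, is $|K|=\frac16|\Delta_K|>0$.
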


\begin{remark} The opposite vertices in the octahedra in Figure \ref{fig:cevefe} are $(A,B)$, $(C,D)$ and $(E,F)$.
\end{remark}

\begin{proof}
We first note that, since the three directions defined by the three pairs of opposite vertices in $K$ are linearly independent, \eqref{charddfv} indeed characterises one and only one vector $\gr_K u\in \R^3$. 
We therefore just have to show that the gradient defined by \eqref{defgradpyddfv} satisfies \eqref{charddfv}.
We have
\be\label{defGkddfv}
\gr_{K} u = \frac {1} {\mcv}\frac 1 3 \sum_{\vertex\in\verticescv} u_\vertex \sum_{\edge\in\edgescv|\vertex\in\vertices_\edge} \medge \ncvedge.
\ee
Let us consider for example the case where $\vertex = A$ in Figure \ref{fig:cevefe}.
For a triangular face $\sigma$ we can write $|\sigma|\n_{K,\sigma}$ as the
exterior product of two of the edges of $\sigma$ (with proper orientation).
This gives
\begin{multline*}
\sum_{\edge\in\edgescv|\vertex\in\vertices_\edge} \medge \ncvedge = \frac 1 2 (\overrightarrow{AC} \times \overrightarrow{AF} + \overrightarrow{AF} \times \overrightarrow{AD} + \overrightarrow{AD} \times \overrightarrow{AE} + \overrightarrow{AE} \times \overrightarrow{AC})\\
= \frac 1 2 (\overrightarrow{DC} \times \overrightarrow{AF}  + \overrightarrow{CD} \times \overrightarrow{AE} )
 = - \frac 1 2 \overrightarrow{CD} \times \overrightarrow{EF}.
\end{multline*}
Applying this to all vertices of $K$, and since $\mcv = \frac 1 6 \Delta_K$ with $\Delta_K = \mbox{det} (\overrightarrow{AB},\overrightarrow{CD},\overrightarrow{EF})$, we deduce
from \eqref{defGkddfv} that
\[
\gr_K u = \frac 1 {\Delta_K} \left( (u_B - u_A) \overrightarrow{CD} \times \overrightarrow{EF}  +  (u_D - u_C) \overrightarrow{EF} \times \overrightarrow{AB} +  (u_F - u_E)  \overrightarrow{AB} \times \overrightarrow{CD}\right).  
\] 
Property \eqref{charddfv} is then straightforward. Considering for example the case
$(\vertex,\vertex')=(B,A)$, the formula follows from
$ (\overrightarrow{EF} \times \overrightarrow{AB}) \cdot
\overrightarrow{AB} =(\overrightarrow{AB} \times \overrightarrow{CD})
\cdot \overrightarrow{AB}=0$ and 
$(\overrightarrow{CD} \times \overrightarrow{EF})\cdot \overrightarrow{AB}
=\mbox{det}(\overrightarrow{CD},\overrightarrow{EF},\overrightarrow{AB})=
\Delta_K$. 

\medskip

Let us now prove that $\Vert \nabla_\disc \cdot \Vert_{L^p(\Omega)^d}$ is a norm on $X_{\disc,0}$. Assume that $\Vert \nabla_\disc u \Vert_{L^p(\Omega)^d}= 0$. 
Then for any $K \in \mesh$, $\gr_K u =0$. Let us take a boundary octahedron $K$.
One of its faces $\sigma$ is entirely contained in $\partial\Omega$ and thus by
the boundary conditions all $(u_\vertex)_{\vertex\in\vertices_\edge}$ vanish.
Using \eqref{charddfv} we infer that all values of $u$ at vertices opposite to $\vertices_\edge$
also vanish. Since any vertex in $K$ either belong to $\vertices_\edge$ or is
opposite to a vertex in $\vertices_\edge$, this shows that $u$ vanishes on all vertices
of $K$. We then conclude by induction on the number of octahedra in the mesh that
$u=0$.
\end{proof}

It is now easy to detail the relationship between the octahedral gradient discretisation
$\disc$ and the CeVeFE-DDFV scheme. 

\begin{lemma}[CeVeFE-DDFV is a gradient scheme]
For any polytopal mesh $\widetilde{\polyd}$ of $\O$, there exists an octahedral mesh $\polyd$ of $\O$
such that, if $\disc$ is the octahedral gradient discretisation
defined as above from $\polyd$, then the gradient scheme \eqref{gslin} for
$\disc$ is the CeVeFE-DDFV method on $\widetilde{\polyd}$.
\end{lemma}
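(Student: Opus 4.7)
The plan is to construct $\polyd$ as the collection of \emph{diamond cells} of the CeVeFE-DDFV method on $\widetilde{\polyd}$. Concretely, to every pair $(\tilde{\edge},\tilde{e})$ made of a face $\tilde{\edge}$ of $\widetilde{\polyd}$ and an edge $\tilde{e}$ of $\tilde{\edge}$, one associates an octahedron whose six vertices come in four types: the two cell centres of $\widetilde{\polyd}$ on either side of $\tilde{\edge}$, the centre of $\tilde{\edge}$, the midpoint of $\tilde{e}$, and the two endpoints of $\tilde{e}$ (with the usual boundary modification when $\tilde{\edge}\subset\partial\O$). These octahedra form a partition of $\O$ and hence define an octahedral mesh $\polyd$ to which the construction of $\disc$ above applies. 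Moreover, the vertex set $\vertices$ of $\polyd$ is precisely the disjoint union of cells, faces, edges and vertices of $\widetilde{\polyd}$, which identifies the degrees of freedom in $X_{\disc,0}$ with the four families of CeVeFE-DDFV unknowns.

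Next, I would verify that the discrete gradient $\gr_K u$ on a diamond $K$ coincides with the standard CeVeFE-DDFV diamond gradient. By Lemma~\ref{lem:gradddfv}, $\gr_K u$ is the unique vector satisfying $\gr_K u\cdot(\vertex-\vertex')=u_\vertex-u_{\vertex'}$ for each of the three pairs of opposite vertices of $K$. In a CeVeFE-DDFV diamond these three pairs are exactly the (cell, cell), (face, edge) and (vertex, vertex) diagonals, so $\gr_K u$ reproduces the classical CeVeFE-DDFV reconstruction by directional differences along the three diagonals of the diamond.

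With the gradients matched, the stiffness form $\int_\O A(\x)\grad_\disc u\cdot\grad_\disc v\d\x$ rewrites as $\sum_{K\in\mesh}|K|(A_K\gr_K u)\cdot(\gr_K v)$, which is precisely the CeVeFE-DDFV stiffness form assembled diamond-by-diamond (under the customary assumption that $A$ is piecewise constant on diamonds, imposed via Remark~\ref{ddfv:heter}). For the right-hand side, testing \eqref{gslin} successively against the canonical basis of $X_{\disc,0}$ associated to each of the four unknown types, and using the piecewise constant form \eqref{cevefe:PiD} of $\Pi_\disc$, produces in turn the four families of flux balance equations on the four dual meshes (cell-, face-, edge- and vertex-centered) that together make up the CeVeFE-DDFV scheme.

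The main obstacle will be the volume and weight bookkeeping: I must check that for each $\vertex\in\vertices$ the quantity $\sum_{K\ni\vertex}\tfrac{1}{3}|V_{K,\vertex}|$ equals the volume of the CeVeFE-DDFV dual control volume associated to $\vertex$ (whichever of the four types it has), and that the normals and face-areas appearing in \eqref{defgradpyddfv} coincide with the dual-mesh normals used to express CeVeFE-DDFV fluxes. Once this geometric identification is carried out, the discrete duality that is built into \eqref{gslin} through the symmetric pairing of $\grad_\disc$ and $\Pi_\disc$ is exactly the CeVeFE-DDFV discrete duality formula, and the equivalence of the two schemes follows.
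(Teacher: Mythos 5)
Your construction is essentially the paper's: the octahedral mesh is taken to be the (degenerate) diamond mesh of the CeVeFE-DDFV method on $\widetilde{\polyd}$, the gradients are matched through the opposite-vertex characterisation \eqref{charddfv} of Lemma \ref{lem:gradddfv}, and the two schemes are then identified through the duality pairing of $\grad_\disc$ and $\Pi_\disc$. The ``volume and weight bookkeeping'' you flag as the remaining obstacle is precisely what the paper disposes of by invoking the discrete duality formula of \cite[Theorem 4.1]{cou-10-dis} applied to the formulation \cite[Eq. (5.4)]{cou-10-dis}, so your outline matches the published argument step for step.
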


\begin{proof} The CeVeFE-DDFV
method on $\widetilde{\polyd}$ has cell, vertices, edge and face unknowns, and its discrete gradient is
piecewise constant on so-called ``diamond cells''. A diamond cell
is an octahedra as in Figure \ref{fig:cevefe} (left), but with $E$
chosen on the segment $[A,B]$ -- hence, the octahedra actually
degenerates into an hexahedra. The segment $[A,B]$ corresponds to an edge
of the primal mesh of the CeVeFE-DDFV method, $F$ is a point on a face
of this mesh that contains $[A,B]$, and $D$ and $C$ are points
inside the cells on each side of this face. Let us take $\polyd$
the polytopal mesh of $\O$ made of the diamond cells (degenerate
octahedra). 
It is proved in \cite[Lemma 3.1]{cou-10-dis}
that the CeVeFE-DDFV discrete gradient satisfies \eqref{charddfv}; hence,
this gradient is $\nabla_\disc$. It is then just a matter of applying the
discrete duality formula \cite[Theorem 4.1]{cou-10-dis} on the formulation
\cite[Eq. (5.4)]{cou-10-dis} of the scheme to see that the CeVeFE-DDFV
scheme for \eqref{pblin} is indeed the gradient scheme \eqref{gslin}
for $\disc$.
\end{proof}

\begin{remark}[CeVeFE-DDFV and heterogeneities]
Except on the boundary of the domain, the diamond cells of the CeVeFE-DDFV 
method are always spread on two neighbouring cells. If the primal mesh is
aligned with heterogeneities of the medium, then these heterogeneities
are actually averaged in the formulation of the CeVeFE-DDFV scheme
(see \cite[Eq. (5.2)]{cou-10-dis}). 
An option to better deal with heterogeneities is to use
the m-DDFV method \cite{boy-08-fin,CHM14}. Additional unknowns are introduced
and eliminated by writing local flux conservativity. These eliminations are easy to perform
for linear models, but require solving local nonlinear equations in the case of
nonlinear models.

As shown in Remark \ref{ddfv:heter}, even starting from
a primal hexahedral mesh aligned with the ground properties it is possible
to construct a (degenerate) octahedral mesh that is also aligned with
the heterogeneities of the medium. Using this octahedral mesh ensures a better
accuracy than the ``standard'' CeVeFE-DDFV method, at a reduced computational
cost compared to the m-DDFV method in the case of nonlinear models (no nonlinear local equation
to solve).
\end{remark}

Let us now turn to the analysis of the properties of the octahedral gradient discretisation.
The regularity of a sequence of octahedral gradient discretisations $(\disc_m)_{m\in\N}$
is merely defined as the regularity of the underlying octahedral 
meshes $(\polyd_m)_{m\in\N}$ of $\O$ in the sense of Definition \ref{def:regpolyd}. 
With this definition of regularity, the property $(\mathcal{P})$ holds for octahedral
gradient discretisations.
 
\begin{proof}[Proof of the property $(\mathcal{P})$ for octahedral gradient discretisations]
As usual, we sometimes drop the indices $m$ for sake of legibility. 
Let us define a control $\emb:X_{\disc,0} \longrightarrow X_{\polyd,0}$ of $\disc$ by $\polyd$ in the sense of Definition \ref{def:inhdfa}, where $\polyd$ is the octahedral
mesh with, for all $K\in\mesh$, the centre of $K$ defined by $\xcv = \frac 1 6 \sum_{\vertex\in\vertices_\cv} \vertex$.
We set
\be
\forall  K\in\mesh\,,\;\emb(u)_K = \frac 1 6 \sum_{\vertex\in\vertices_\cv} u_\vertex
\quad\hbox{ and }\quad \forall \edge\in\edges,\  \emb(u)_\sigma=\frac 1 3 \sum_{\vertex\in\vertices_\edge}u_\vertex.
\label{definhddfv}\ee
Let $\edge\in\edgescv$, and let us denote by $(\vertex_i)_{i=1,2,3}$ the three vertices of $\edge$ and
by $\vertex_i'$ the vertex opposite to $\vertex_i$ in $K$. We then have, for $u\in X_{\disc,0}$,
\begin{align*}
\emb(u)_\edge - \emb(u)_K &= 
\frac{1}{3}(u_{\vertex_1}+u_{\vertex_2}+u_{\vertex_3})
-\frac{1}{6}(u_{\vertex_1}+u_{\vertex_2}+u_{\vertex_3}+u_{\vertex_1'}+u_{\vertex_2'}
+u_{\vertex_3'})\\
&=\frac{1}{6}(u_{\vertex_1}-u_{\vertex_1'})
+\frac{1}{6}(u_{\vertex_2}-u_{\vertex_2'})+\frac{1}{6}(u_{\vertex_3}-u_{\vertex_3'})
=\frac 1 6 \sum_{i=1}^3 \gr_K u\cdot(\vertex_i-\vertex'_i),
\end{align*}
thanks to Lemma \ref{lem:gradddfv}. 
Therefore, since 
$\frac{|\vertex_i-\vertex'_i|}{\dcvedge}\le \frac{h_K}{\dcvedge} \le \theta_{\polyd}$,
we have $\sum_{\edge\in\edgescv}\medge\dcvedge \left|\frac {\emb(u)_\edge - \emb(u)_K} {\dcvedge}\right|^p \le \frac{1}{2^p}\theta_{\polyd}^p\, d \mcv\, |\gr_K u|^p$
and \eqref{cnt:poly1} follows.

For $K\in\mesh$ and a.e. $\x\in K$
there are three vertices $(\vertex_i)_{i=1,2,3}\subset\verticescv$  such that $\x\in V_{K,\vertex_i}$. Denoting by $\vertex_i'$ the vertex in $K$ opposite to $\vertex_i$, Lemma \ref{lem:gradddfv} again gives
\begin{align*}
 \Pi_\disc u(\x) -  \Pi_{\polyd} \emb(u)(\x)=\frac{1}{6}(u_{\vertex_1}-u_{\vertex_1'})
+\frac{1}{6}(u_{\vertex_2}-u_{\vertex_2'})+\frac{1}{6}(u_{\vertex_3}-u_{\vertex_3'})
=\frac 1 6 \sum_{i=1}^3 \gr_K u\cdot(\vertex_i-\vertex'_i).
\end{align*}
This shows that $|\Pi_\disc u(\x) - \Pi_{\disc'} \emb(u)(\x) | \le 
\frac{1}{2}h_\mesh  |\gr_K u|$ and thus that
$\Vert \Pi_\disc u-\Pi_{\polyd}\emb(u)\Vert_{L^p(\O)}\le \frac{1}{2}
h_\mesh \Vert \nabla_\disc u\Vert_{L^p(\O)^d}$.
Hence $\omega^\Pi(\disc,\polyd,\emb) \le \frac{h_{\mesh}}{2}$ and
\eqref{cnt:poly2} holds. The definition
\eqref{defgradpyddfv} implies that $\grad_\disc u = \grad_{\polyd} \emb(u)$, and therefore $\omega^\nabla(\disc,\polyd,\emb) =0$, which implies \eqref{cnt:poly3}.
By Proposition \ref{prop:inhdfa} we deduce that $(\disc_m)_{m\in\N}$ is coercive in the sense of Definition \ref{def-coer}, limit-conforming in the sense of Definition \ref{def-limconf}, and compact in the sense of  Definition \ref{def-comp}.

The bound on $\theta_\polyd$ forces the three vectors $(\vertex-\vertex')_{(\vertex,
\vertex')\mbox{\scriptsize~opposite in $K$}}$ to be of similar length
and ``really non-coplanar'' -- that is to say with a determinant of order the cube of
their similar length -- which gives an estimate on $||\gr_K||_\infty$. 
Hence, the regularity factor $\regLLE(\disc_m)$ is bounded and
Lemma \ref{lem:gradddfv} proves the consistency. \end{proof}

\subsection{Hybrid mimetic mixed schemes}\label{sec:hmm}

\subsubsection{Fully hybrid scheme}\label{sec:hyb}

Since the 50's, several schemes have been developed with the objective to
satisfy some form of calculus formula at the discrete level. These schemes
are called mimetic finite difference (MFD) or compatible discrete operator
(CDO) schemes. Contrary to DDFV methods that design a discrete operators
and duality products to satisfy fully discrete calculus formula, MFD/CDO methods design discrete operators
that satisfy a Stokes formula that involves both continuous and discrete
functions.
Depending on the choice of the location of the main geometrical entities attached to the dof (faces or vertices),
two different MFD/CDO families exist.
We refer to \cite{mfdrev} for a review on MFD methods, and to \cite{EB14,phdbonelle}
(and reference therein) for CDO methods.

A first MFD method, hereafter called hybrid MFD (hMFD),
is designed by using the fluxes through the mesh faces
as initial unknowns. This requires to recast \eqref{pblin} in a mixed form,
i.e. to write $\overline{q}=A\nabla\bu$ and $\div(\overline{q})=f$,
and to discretise this set of two equations. The resulting scheme takes a form
that is apparently far from the gradient scheme \eqref{gslin}.
It was however proved in \cite{dro-10-uni} that this hMFD can be actually
embedded in a slightly larger family that also contains hybrid finite volume (HFV)
methods \cite{sushi} and mixed finite volume (MFV) methods \cite{dro-06-mix}.
This family has been called hybrid mimetic mixed (HMM) schemes; each scheme
in this family can be written in three different ways, depending on the
considered approach (hMFD, HFV or MFV). The HFV formulation of an HMM
scheme is very close to the weak formulation \eqref{pblinw} of the elliptic PDE;
it actually consists in writing this weak formulation with a discrete gradient
and a stabilisation term (bilinear form on $(u,v)$). It was proved in
\cite{dro-12-gra} that the discrete gradient can be modified to include the
stabilisation terms, and thus that all HMM methods -- which means all
hMFD methods also -- are actually gradient schemes.

The discrete elements that define an HMM gradient discretisation are the following.
We again refer to Definition \ref{def:LL} for the construction of $\disc$.

\begin{enumerate}

\item 
Let $\polyd$ be a polytopal mesh of $\O$ as in Definition \ref{def:polymesh}.
The  geometrical entities attached to the dof are $I =\mesh\cup\edges$ and the approximation 
points are $S=((\x_K)_{K\in\mesh},(\centeredge)_{\edge\in\edges})$.
We let $\Ii=\mesh\cup\edgesint$ and $\Ib=\edgesext$, and
we have $X_{\disc,0} = X_{\polyd,0}$ as defined by \eqref{dfa:spaceH}.
The partition is  $\partition = \{D_{K,\sigma}: K\in\mesh, \sigma\in\edgescv\}$
and, for $U=D_{K,\edge}\in\partition$, we set $I_U = \{K\}\cup \edgescv$.

\item The reconstruction \eqref{ll:piD} is defined by the functions
$\alpha_K\equiv 1$ in $K$ and $\alpha_K\equiv 0$ outside $K$, for all $K\in\mesh$, and $\alpha_{\edge}\equiv 0$ for $\edge \in \edgescv$.
Recalling the definition \eqref{def:pipolyd} we therefore have
 \[
  \forall v\in X_{\disc,0},\ \forall K\in\mesh,\ \forall \x\in K,\  \Pi_\disc v(\x) = 
\Pi_\polyd v(\x)=v_K.
 \]

\item Recalling the definition of the polytopal gradient \eqref{def:nablapolyd}, we start the construction of the discrete gradient by setting
\begin{equation}
\forall v\in X_{\disc,0}\,,\;\forall K\in\mesh\,,\;
\nabla_K v=(\nabla_\polyd v)_{|K}=\frac{1}{|K|}\sum_{\edge\in\edgescv}|\sigma|v_\sigma\n_{K,\edge}.
\label{def:gradK}\end{equation}                
This gradient is linearly exact, but it does not lead to the norm property:
it can vanish everywhere even for $v\not=0$ (it suffices to take $v_\edge = 0$ for all $\edge\in\edges$). 
We therefore add a stabilisation that is constant in each half-diamond,
and that vanishes on interpolations of affine functions:
\be\label{hmm:nablaD}
\forall v\in X_{\disc,0}\,,\;\forall D_{K,\sigma}\in\partition\,,\; \forall \x\in D_{K,\edge}\,,\;
\gr_{D_{K,\edge}} v(\x) = \nabla_K v + \frac{\sqrt{d}}{d_{K,\edge}}[\iso_K \tpen_K(\incr_K(v))]_\edge \bfn_{K,\edge},
\ee
where 
\begin{itemize}
\item $\incr_K(v)=(v_\edge-v_K)_{\edge\in\edgescv}$,
\item $\tpen_K:\R^\edgescv\mapsto \R^\edgescv$ is
the linear mapping defined by $\tpen_K(\xi)=(\tpen_{K,\edge}(\xi))_{\edge\in\edgescv}$ with
\[
\tpen_{K,\edge}(\xi)=\xi_\sigma - \left(\frac{1}{|K|}\sum_{\edge'\in\edgescv}|\edge'|\xi_{\edge'} \bfn_{K,\edge'}\right)\cdot(\centeredge-\x_K),
\]
\item $\iso_K$ is an isomorphism of the vector space ${\rm Im}(\tpen_K)\subset \R^\edgescv$.
\end{itemize}
 \item The norm property of $\Vert \nabla_\disc \cdot \Vert_{L^p(\Omega)^d} $ on $X_{\disc,0}$ follows from the estimate (see \cite[Lemma 5.3]{dro-12-gra})
\begin{equation}
  \Vert u \Vert_{\polyd,0,p}   \le
\cter{cte:hmm} \Vert \nabla_\disc u\Vert_{L^p(\Omega)^d}, 
\label{cnt1:hmm}
\end{equation}
where $\ctel{cte:hmm}\ge 0$ depends on an upper bound on the regularity factor $\theta_\polyd$ of the polytopal mesh and on the regularity factor $\zeta_\disc$.
This last factor is defined as the smallest number such that, for all $K\in\mesh$ and all
$\xi\in \R^\edgescv$,
\[
\zeta_\disc^{-1}\sum_{\edge\in\edgescv}|D_{K,\edge}|\left|\frac{\tpen_{K,\edge}(\xi)}{d_{K,\edge}}\right|^p
\le \sum_{\edge\in\edgescv} |D_{K,\edge}|\left|\frac{[\iso_K\tpen_K(\xi)]_\edge}{d_{K,\edge}}\right|^p
\le\zeta_\disc\sum_{\edge\in\edgescv}|D_{K,\edge}|\left|\frac{\tpen_{K,\edge}(\xi)}{d_{K,\edge}}\right|^p.
\]
\end{enumerate}

\begin{remark}
The face degree of freedom $v_\edge$ corresponds to the hybridisation of
the hMFD methods. 
\end{remark}

The freedom of choice of the isomorphisms $(\iso_K)_{K\in\mesh}$ ensures that
all hMFD, HFV and MFV schemes are covered by the framework (there are several
such schemes, due to different possible stabilisation parameters).
More precisely, \cite{dro-12-gra} proves that for any HMM scheme $\mathcal S$ on
$\polyd$ there exists a family of isomorphism $(\iso_K)_{K\in\mesh}$ such that,
if $\disc$ is defined as above, then the gradient scheme \eqref{gslin} is $\mathcal S$.

The proof of the property $(\mathcal{P})$ for HMM methods was originally given in \cite{dro-12-gra}.
We show here how the notion of gradient discretisations controlled by polytopal toolboxes notably
simplifies this proof. We say that a sequence of HMM gradient discretisations
$(\disc_m)_{m\in\N}$ is regular if the sequence of underlying polytopal meshes $(\polyd_m)_{m\in\N}$
is regular in the sense of Definition \ref{def:regpolyd}, and if $(\zeta_{\disc_m})_{m\in \N}$ is bounded.

\begin{proof}[Proof of the property $(\mathcal{P})$ for HMM gradient discretisations]
Let $(\disc_m)_{m\in\N}$ be HMM gradient discretisations built on polytopal meshes $(\polyd_m)_{m\in\N}$, and let us 
define a control of $\disc_m$ by $\polyd_m$ in the sense of Definition \ref{def:inhdfa}. 
We drop the index $m$ from time to time.
Since $X_{\disc,0}=X_{\polyd,0}$, we can take $\emb={\rm Id}$.
Estimate \eqref{cnt:poly1} is given by \eqref{cnt1:hmm}.
Relation \eqref{cnt:poly2} follows immediately since $\omega^\Pi(\disc,\polyd,\emb)=0$,
owing to $\Pi_{\disc} u = \Pi_{\polyd}u=\Pi_{\polyd}\emb(u)$. 
Recalling that $|D_{K,\edge}|=
\frac{|\edge|d_{K,\edge}}{d}$ we have 
\be \label{HMM:average.grad}
 \int_K \grad_\disc u(\x)\d\x  = \mcv \nabla_K u +\frac{1}{\sqrt{d}}
\sum_{\edge\in\edgescv} |\edge|[\iso_K \tpen_K(\incr_K(u))]_\edge \bfn_{K,\edge}.
\ee
The definition of $\tpen_K$ and the property $\sum_{\edge\in\edgescv}|\edge|\bfn_{K,\edge}
(\centeredge-\x_K)^T=|K|{\rm Id}$ (a consequence of Stokes' formula) show that
for any $\eta\in {\rm Im}(\tpen_K)$ we have
$\sum_{\edge\in\edgescv}|\edge| \eta_\edge  \bfn_{K,\edge}=0$.
Hence, since ${\rm Im}(\iso_K)={\rm Im}(\tpen_K)$, \eqref{HMM:average.grad} gives
\[
  \int_K \grad_\disc u(\x)\d\x  = \mcv \nabla_K u =\mcv \nabla_\polyd\emb(u)_{|K},
\]
which shows that $\omega^\nabla(\disc,\polyd,\emb)=0$, and thus
that \eqref{cnt:poly3} holds. The coercivity, limit-conformity and compactness
of $(\disc_m)_{m\in\N}$ therefore follow from Proposition \ref{prop:inhdfa}.
Since HMM gradient discretisations are LLE gradient discretisations, the
consistency of $(\disc_m)_{m\in\N}$ readily follows from Proposition \ref{prop:LL},
after noticing that the regularity assumption on $(\disc_m)_{m\in\N}$
gives a bound on $(\regLLE(\disc_m))_{m\in\N}$. \end{proof}

\subsubsection{The SUSHI scheme} 

With the notations of Section \ref{sec:prop}, the SUSHI scheme \cite{sushi} is the gradient scheme obtained by a barycentric condensation of the HMM gradient discretisation.
For simplicity, we only consider here the case  when all face unknowns are eliminated (the ``SUCCES'' version in \cite{sushi}),  although more accurate methods could be used in the case of coarse meshes in heterogeneous domains.
 
\begin{enumerate}
\item Let $\polyd$ be a polytopal toolbox of $\O$ in the sense of
Definition \ref{def:polydis}. 
We first define an HMM gradient discretisation  $\disc = (X_{{\disc},0},\nabla_{{\disc}},\Pi_{{\disc}})$, as in the section above, for which $I = \mesh\cup\edges$.
\item We introduce $\bcI = \mesh\cup\edgesext$, and for all $\edge\in\edgesint$ we select $H_\edge\subset \bcI$ and introduce barycentric coefficients $\beta^\edge_i$ such that
\[
 \sum_{i\in H_\edge}\beta^\edge_i = 1\quad\hbox{ and }\quad\centeredge = \sum_{i\in H_\edge}\beta^\edge_i \x_i,
\]
which corresponds to \eqref{barcomb}.
\item The SUSHI gradient discretisation is the corresponding barycentric
condensation $\bcdisc$ of ${\disc}$ in the sense of Definition \ref{def:bcgd}.

\item We have $\Pi_\bcdisc=\Pi_{{\disc}}$ since this reconstruction
is only built from the values at the centres of the cells.

\end{enumerate}

% 
% 
% \begin{definition}[Regularity of SUSHI gradient discretisations]
% A sequence $(\disc_m)_{m\in\N}$ of SUSHI gradient discretisations is regular if the sequence $(\polyd_m)_{m\in\N}$ is regular in the sense of Definition \ref{def:regpolyd}
% and if $(\regbc(\disc_m)+\zeta_{\overline{\disc}_m})_{m\in\N}$ is bounded,
% where $\regbc$ is defined in Definition \ref{def:bcgd} and 
% $\zeta_{\disc}$ is defined in Definition \ref{def:reghmm} for the HMM gradient discretisations $(\overline{\disc}_m)_{m\in\N}$.
% \end{definition}

The property $(\mathcal{P})$ for this barycentric condensation of the HMM method
is a consequence of the property $(\mathcal{P})$ for the HMM method and
of Theorem \ref{th:bcgd}, assuming that $(\regbc(\bcdiscm)+\zeta_{\disc_m}+ \theta_{\polyd_m})_{m\in\N}$ is bounded.

\subsection{Nodal mimetic finite difference methods}\label{sec:nMFD}

The nodal MFD method (nMFD) is described in \cite{BBL09}. 
We present here a gradient discretisation which enables us to write the nMFD method for \eqref{pblin} as the gradient scheme \eqref{gslin}.
Let $\polyd$ be a polytopal mesh of $\O$ in the sense of Definition \ref{def:polymesh}.
For each $K\in\mesh$ we choose non-negative weights $(\omega_K^\vertex)_{\vertex\in
\vertices_K}$ such that the quadrature
\be\label{quad:K}
\int_K w(\x)\d \x\approx \sum_{\vertex\in\vertices_K}\omega_K^\vertex w(\vertex)
\ee
is exact for constant functions $w$, which means that $\sum_{\vertex\in\vertices_K} \omega_K^\vertex=|K|$.
For each face $\edge\in\edgescv\cap \edgesint$, we also choose non-negative weights
$(\omega_\edge^\vertex)_{\vertex\in\vertices_\edge}$ such that the quadrature
\be\label{quad:edge}
\int_\edge w(\x)\d s(\x)\approx \sum_{\vertex\in\vertices_\edge}\omega_\edge^\vertex 
w(\vertex)
\ee
is exact for affine functions $w$. This is equivalent to
$\sum_{\vertex\in\vertices_\edge}\omega_\edge^\vertex =|\edge|$
and $\sum_{\vertex\in\vertices_\edge}\omega_\edge^\vertex \vertex = |\edge|\centeredge$.
We will assume the following property on these weights. This property is not
required in the construction of the nMFD, but it is used
to identify the nMFD with a gradient scheme. We note that this assumption
is not very restrictive, since it holds for
any natural choice of weights for \eqref{quad:edge}.
\be\label{assum:weights}
\forall K\in\mesh\,,\;
\forall \vertex\in\vertices_K\,,\;\exists \sigma\in\edges_{K,\vertex}\mbox{ such that } 
\omega_\edge^\vertex\not=0,
\ee
where $\edges_{K,\vertex}=\{\edge\in\edgescv\,:\,\vertex\in\vertices_\edge\}$ is the set of faces of $K$ that have $\vertex$ as one
of their vertices. 
For each cell $K \in \mesh$, we define its centre as 
\be\label{def:xtK}
{\x}_K=\frac{1}{|K|}\sum_{\vertex\in\vertices_K}\omega_K^\vertex \vertex
\ee
and we select a partition $(V_{K,\vertex})_{\vertex\in\vertices_K}$ such that
\be\label{mes:part}
\forall \vertex\in\vertices_K\,,\;|V_{K,\vertex}|=\sum_{\edge\in\edges_{K,\vertex}}
\omega_\edge^\vertex \frac{|D_{K,\edge}|}{|\edge|}=\frac{1}{d}\sum_{\edge\in\edges_{K,\vertex}}
\omega_\edge^\vertex d_{K,\edge}.
\ee

We can then again use  Definition \ref{def:LL} to construct the gradient discretisation $\disc = (X_{\disc,0},\nabla_\disc,\Pi_\disc)$ corresponding to the nMFD scheme.

\begin{enumerate}

\item The set of geometrical entities attached to the dof is $I =\vertices$ and  the set of approximation points is $S=I$. We set $\Ii=\vertices\cap\Omega$ and
$\Ib=\vertices\cap\partial\Omega$.
The partition is $\partition = (V_{K,\vertex})_{K\in\mesh,\,\vertex\in\vertices_K}$, and for $U = V_{K,\vertex}$ we let $I_U = \verticescv$.

\item For $U = V_{K,\vertex}$ we choose in the reconstruction \eqref{ll:piD} the functions
\be\label{nMFD:piD}
\forall \x\in U\,,\;\forall \vertex'\in\vertices_K\,,\;
\alpha_{\vertex'}(\x) :=\frac{1}{|K|}\omega_K^{\vertex'}.
\ee
This leads to
 \be\label{nMFD:piDb}
  \forall v\in X_{\disc,0},\ \forall K\in\mesh,\ \forall \x\in K,\  \Pi_\disc v(\x) = v_K :=\frac{1}{|K|}\sum_{\vertex\in
\vertices_K}\omega_K^\vertex v_\vertex.
 \ee

\item In a similar way as for the HMM method, the reconstructed gradient is
the sum of a constant gradient in each cell and of stabilisation terms in each
$V_{K,\vertex}$. We set
\be\label{nMFD:nablaK}
\forall v\in X_{\disc,0}\,,\;\forall K\in\mesh\,,\;
\nabla_K v = \frac{1}{|K|}\sum_{\edge\in\edgescv} \left(\sum_{\vertex\in\vertices_\edge}\omega_\edge^\vertex v_\vertex\right)\n_{K,\edge}.
\ee
and
\be\label{nMFD:nablaD}
\forall v\in X_{\disc,0}\,,\;\forall V_{K,\vertex}\in\partition\,,\;\forall \x\in V_{K,\vertex}\,,\;
\gr_{V_{K,\vertex}} v(\x)= \nabla_K v+ \frac{1}{h_K}
[\iso_K \tpen_K(\incr_K(v))]_\vertex \nv_{K,\vertex}
\ee
where
\begin{itemize}
\item $\nv_{K,\vertex}=\frac{h_K}{d|V_{K,\vertex}|}\sum_{\edge\in\edges_{K,\vertex}}
\omega_\edge^\vertex \n_{K,\edge}$,
\item $\incr_K(v)=(v_\vertex-v_K)_{\vertex\in\vertices_K}$ with $v_K=\frac{1}{|K|}\sum_{\vertex\in
\vertices_K}\omega_K^\vertex v_\vertex$ as in \eqref{nMFD:piDb},
\item $\tpen_K:\R^{\vertices_K}\mapsto \R^{\vertices_K}$ is the linear mapping described
by $\tpen_K(\xi)=(\tpen_{K,\vertex}(\xi))_{\vertex\in\vertices_K}$ with
\be\label{nMFD:tpen}
\tpen_{K,\vertex}(\xi)=\xi_\vertex-\nabla_K \xi\cdot (\vertex- {\x}_K),
\ee
where $\nabla_K \xi$ is defined as in \eqref{nMFD:nablaK}, and ${\x}_K$ is the centre of $K$  defined by \eqref{def:xtK},
\item $\iso_K$ is an isomorphism of the space ${\rm Im}(\tpen_K)\subset \R^{\vertices_K}$.
\end{itemize}
 
\item The proof that $\Vert \nabla_\disc \cdot \Vert_{L^p(\Omega)^d}$ is a norm on $X_{\disc,0}$ is the consequence of the following inequality, which is obtained in the same way as in \cite[Lemma 5.3]{dro-12-gra}: defining $u_K=\frac{1}{|K|}\sum_{\vertex\in
\vertices_K}\omega_K^\vertex u_\vertex$ (see \eqref{nMFD:piDb}), we have
\be
\label{controlvvvK}
\sum_{\vertex\in\vertices_K} |V_{K,\vertex}|\, \left|\frac{u_\vertex-u_K}{h_K}\right|^p\le \cter{cte:nmfd} ||\nabla_\disc u||_{L^p(K)^d}^p,
\ee
with $\ctel{cte:nmfd}$ only depending on an upper bound on $\theta_\polyd$ and on the regularity $\zeta_\disc$. This factor is defined as the smallest number such that, for all $K\in\mesh$ and all $\xi\in \R^{\vertices_K}$, 
\be\label{hyp:iso:nodal}
\zeta_\disc^{-1}
\sum_{\vertex\in\vertices_K} |V_{K,\vertex}|\,
\left|\frac{\tpen_{K,\vertex}(\xi)}{h_K}\right|^p
\le
\sum_{\vertex\in\vertices_K} |V_{K,\vertex}|\,
\left|\frac{[\iso_K\tpen_K(\xi)]_\vertex}{h_K}\right|^p
\le
\zeta_\disc
\sum_{\vertex\in\vertices_K} |V_{K,\vertex}|\,
\left|\frac{\tpen_{K,\vertex}(\xi)}{h_K}\right|^p.
\ee
\end{enumerate}

Under Assumption \eqref{assum:weights} it is proved in \cite{koala} that the gradient scheme \eqref{gslin} obtained from this gradient discretisation is identical to the
nMFD method of \cite{BBL09} for \eqref{pblin}.

%\begin{remark} If \eqref{quad:K} is exact at order 1 (instead of order 0), then $\widetilde{\x}_K$
%defined by \eqref{def:xtK} is the centre of gravity of $K$.
%\end{remark}

\begin{remark} The second equality in \eqref{mes:part}
comes from $|D_{K,\edge}|=\frac{|\edge|d_{K,\edge}}{d}$,
and this choice of $|V_{K,\vertex}|$ is compatible with the requirement that $\sum_{\vertex\in\vertices_K}|V_{K,\vertex}|=|K|$.
The detailed construction and geometric properties of $V_{K,\vertex}$
are not needed for the analysis of the method or for its
implementation. The only required information is the measure of this set.

Other choices of $V_{K,\vertex}$ are possible. For example, we could take
all $(V_{K,\vertex})_{\vertex\in\vertices_K}$ of the same measure
$\frac{|K|}{{\rm Card}(\vertices_K)}$, and Property $(\mathcal{P})$ would still
be valid. However, a stronger assumption than \eqref{assum:weights} would be required to ensure the coercivity of the corresponding gradient discretisations;
we would need $\sum_{\edge\in\edges_{K,\vertex}}\omega_\edge^\vertex\ge c h_K^{d-1}$
with $c>0$ not depending on $K$ or $\vertex$.
\end{remark}

% \begin{definition}[Regularity of the nMFD gradient discretisation]\label{def:regnmfd}
% If $\disc$ is an nMFD gradient discretisation as above, we define $\zeta_\disc$
% as the smallest number such that, for all $K\in\mesh$ and all $\xi\in \R^{\vertices_K}$,
% \be\label{hyp:iso:nodal}
% \zeta_\disc^{-1}
% \sum_{\vertex\in\vertices_K} |V_{K,\vertex}|\,
% \left|\frac{\tpen_{K,\vertex}(\xi)}{h_K}\right|^p
% \le
% \sum_{\vertex\in\vertices_K} |V_{K,\vertex}|\,
% \left|\frac{[\iso_K\tpen_K(\xi)]_\vertex}{h_K}\right|^p
% \le
% \zeta_\disc
% \sum_{\vertex\in\vertices_K} |V_{K,\vertex}|\,
% \left|\frac{\tpen_{K,\vertex}(\xi)}{h_K}\right|^p.
% \ee
% A sequence $(\disc_m)_{m\in\N}$ of nMFD gradient discretisations
% is regular if the sequence $(\polyd_m)_{m\in\N}$ is regular in the sense of Definition \ref{def:regpolyd} and if $(\zeta_{\disc_m})_{m\in\N}$ is bounded.
% \end{definition}

\begin{remark} Contrary to the HMM gradient discretisation, the nMFD gradient discretisation
does not have a piecewise constant reconstruction for the natural choice of unknowns, nor for any obvious
choice of unknowns. 
It should therefore be modified, e.g. by mass-lumping as in Section \ref{sec:masslump}, to be applicable in practice to certain non-linear models.
\end{remark}

\begin{remark} In the case of octahedral meshes, if the stabilisation term
in $\nabla_\disc$ is set to $0$, then the discrete space and gradient of
the nMFD gradient discretisation are identical to the discrete space and gradient of the
octahedral gradient discretisation (and thus of the CeVeFE-DDFV
method on degenerate octahedra). The only difference remains in the definition of
the reconstruction $\Pi_\disc$.
\end{remark}

\begin{remark} In \cite{EB14}, the authors construct a vertex-based compatible discrete operator scheme; they
show that it belongs to the nMFD family of schemes \cite[Section 3.5]{EB14} and that it satisfies the coercivity, consistency and limit-conformity properties \cite[Section 4.5]{EB14}.
\end{remark}

The regularity of sequences of nMFD gradient discretisations $(\disc_m)_{m\in\N}$ is defined as the
regularity of the underlying polytopal meshes $(\polyd_m)_{m\in\N}$
(see Definition \ref{def:regpolyd}) and the boundedness of $(\zeta_{\disc_m})_{m\in \N}$.

\begin{proof}[Proof of the property $(\mathcal{P})$ for the nMFD gradient discretisation]
As in previous proofs, we drop indices $m$ from time to time.
We define a control $\emb$ of $\disc$ by $\polyd$, in the sense of Definition \ref{def:inhdfa},
by
\be
\forall  K\in\mesh,\ \emb(u)_K = u_K=\frac{1}{|K|}\sum_{\vertex\in\vertices_K}\omega_K^\vertex u_\vertex \quad\hbox{ and } \quad\forall \edge\in\edges,\ \emb(u)_\sigma=\frac 1 {\medge} \sum_{\vertex\in\vertices_\edge}\omega_\edge^\vertex u_\vertex.
\label{definhnmfd}\ee
Let us prove \eqref{cnt:poly1}.
Since $\sum_{\vertex\in\vertices_\edge}\omega_\edge^\vertex=|\edge|$
we have
$ \emb(u)_\edge - \emb(u)_K = \frac 1 {\medge} \sum_{\vertex\in\vertices_\edge}\omega_\edge^\vertex (u_\vertex - u_K)$.
Therefore, using Jensen's inequality and the fact that $\frac{1}{\dcvedge}
\le \frac{\theta_{\polyd}}{h_K}$
we find
\begin{multline*}
  \sum_{\edge\in\edgescv}\!\medge\dcvedge\!\left|\frac {\emb(u)_\edge - \emb(u)_K}{\dcvedge}\right|^p  \le \sum_{\edge\in\edgescv}\!\dcvedge \sum_{\vertex\in\vertices_\edge}\! \omega_\edge^\vertex \left|\frac {u_\vertex - u_K}{\dcvedge}\right|^p\le \theta_{\polyd}^{p} \sum_{\edge\in\edgescv}\dcvedge \sum_{\vertex\in\vertices_\edge}\omega_\edge^\vertex \!\left|\frac {u_\vertex - u_K}{h_K}\right|^p\\
    \le \theta_{\polyd}^{p}\sum_{\vertex\in\vertices_K} \left(\sum_{\edge\in\edges_{\cv,\vertex}}\dcvedge\omega_\edge^\vertex\right)\left|\frac {u_\vertex - u_K}{h_K}\right|^p =\theta_{\polyd}^{p} d \sum_{\vertex\in\vertices_K} |V_{K,\vertex}|\, \left|\frac{u_\vertex-u_K}{h_K}\right|^p.
\end{multline*}
We conclude the proof of \eqref{cnt:poly1} thanks to \eqref{controlvvvK}.
Since $\Pi_{\disc} u = \Pi_{\polyd}\emb(u)$, we have $\omega^\Pi(\disc,\polyd,\emb)=0$
and \eqref{cnt:poly2} follows.
For $K\in\mesh$ we have $\grad_K u = (\grad_\polyd\emb(u))_{|K}$.
Therefore
\be\label{nMFD:id.grad}
 \int_K \grad_\disc u(\x)\d\x  = \mcv (\nabla_\polyd \emb(u))_{|K} +
\frac{1}{d}\sum_{\vertex\in\verticescv}  [\iso_K \tpen_K(\incr_K(v))]_\vertex
\sum_{\edge\in\edges_{K,\vertex}} \omega_\edge^\vertex \n_{K,\edge}.
\ee
Similarly as for the HMM method, for any $\eta\in {\rm Im}(\tpen_K)$
we have $\sum_{\vertex\in\verticescv} \eta_\vertex\sum_{\edge\in\edges_{K,\vertex}} \omega_\edge^\vertex \n_{K,\edge}=0$.
Hence, the last term in \eqref{nMFD:id.grad} vanishes and \eqref{cnt:poly3} holds since $\omega^\nabla(\disc,\polyd,\emb)=0$.
Hence the hypotheses of Proposition \ref{prop:inhdfa} are verified, which shows that $(\disc_m)_{m\in\N}$ is coercive, limit-conforming
and compact.

By noticing that $\regLLE(\disc_m)$ remains bounded by regularity assumption on $(\disc_m)_{m\in\N}$,
the consistency of $(\disc_m)_{m\in\N}$ is an immediate consequence of Proposition \ref{prop:LL}
since nMFD gradient discretisations are LLE gradient discretisations. \end{proof}

\subsection{Vertex approximate gradient (VAG) methods}

Successive versions of the VAG schemes have been
described in several papers \cite{eym-12-sma,VAG-compGeo}. 
VAG methods stem from the idea that it is often computationally 
efficient to have all unknowns located at the vertices of the mesh,
especially with tetrahedral meshes (which have much less vertices than cells).
It is however known that schemes with degrees of freedom at the vertices
may lead to unacceptable results for the transport of a species in a heterogeneous domain,
in particular for coarse meshes (one layer of mesh for one homogeneous layer, for example).
The VAG schemes are an answer to this conundrum. After all possible local eliminations,
the VAG schemes only has vertex unknowns, and it has been shown to cure the numerical issues
for coarse meshes and heterogeneous media \cite{VAG-compGeo,VAGcanum,zamm2013};
this is due to a specific mass-lumping that spreads the reconstructed
function between the centre of the control volumes and the vertices.
Let us remark that the original version of the VAG scheme in  \cite{eym-12-sma}
uses the same nodal formalism as Section \ref{sec:nMFD}, but has been shown in the FVCA6
3D Benchmark \cite{3Dbench} to be less precise than the version presented here.

The VAG scheme is defined as a barycentric condensation and mass-lumping of
the $\P_1$ gradient discretisation on a sub-tetrahedral mesh.
\begin{enumerate}
\item Let $\polyd=(\mesh,\edges,\centers,\vertices)$ be a polytopal mesh of $\O$ in the sense of
Definition \ref{def:polymesh}, except the hypothesis that the faces $\edge\in\edges$ are planar which is not necessary here. We define a tetrahedral mesh by the following procedure.
For any $K\in\mesh$, $\edge\in\edgescv$, and $\vertex,\vertex'\in \vertices_\edge$ such that $[\vertex,\vertex']$ is an edge of $\edge$,
we define the tetrahedron $T_{K,\edge,\vertex,\vertex'}$ by its four vertices $\xcv,\x_\edge,\vertex,\vertex'$ (see Figure \ref{fig:vag}), where the
point $\x_\edge$ corresponding to the face $\edge$ is
\be\label{def:xedgeVAG}
\x_\edge = \frac 1 {{\rm Card}(\vertices_\edge)} \sum_{\vertex\in\vertices_\edge} \vertex.
\ee
We denote by $\polyd^T$ the simplicial mesh corresponding to
these $T_{K,\edge,\vertex,\vertex'}$.

\item We let $\overline{\disc} = (X_{\overline{\disc},0},\nabla_{\overline{\disc}},\Pi_{\overline{\disc}})$ be the $\P_1$ gradient discretisation defined from $\polyd^T$
as in Section \ref{sec:Pk} for $k=1$. 
For the gradient discretisation $\overline{\disc}$, the set $I$ of geometrical entities attached to the dof is $I=\mesh\cup\vertices\cup\edges$, and the set of $S$ of approximation points of
is  $S=((\x_K)_{K\in\mesh},(\vertex)_{\vertex\in\vertices},(\x_\edge)_{\edge\in\edges})$.
We define $\obcdisc$ as the barycentric condensation of $\overline{\disc}$ (see Definition \ref{def:bcgd}) such that $\bcI=\mesh\cup\vertices$ and the degrees of freedom attached to $\edges$ are eliminated by setting $H_\edge = \vertices_\edge$ and the coefficients
$\beta^\edge_\vertex = 1/{\rm Card}(\vertices_\edge)$ for all $\vertex\in\vertices_\edge$,
which are precisely the coefficients in \eqref{def:xedgeVAG}.
%Hence \eqref{bc:extv} is written in this case, for $v\in X_{\bcdisc,0}$,
%\be\label{def:vedgeVAG}
%V_\edge = \frac 1 {{\rm Card}(\vertices_\edge)} \sum_{\vertex\in\vertices_\edge} v_\vertex.
%\ee

\item The VAG scheme is the gradient discretisation $\disc$ obtained from  the gradient discretisation $\obcdisc$ by performing a mass-lumping in the sense of Definition \ref{def:ml}. 
We split each tetrahedron $T_{K,\edge,\vertex,\vertex'}$ into
three parts $T_{K,\edge,\vertex,\vertex'}^{K}$, $T_{K,\edge,\vertex,\vertex'}^{\vertex}$,
and $T_{K,\edge,\vertex,\vertex'}^{\vertex'}$ (whose detailed geometry is not needed), and we let $V_K$ be the union of all $(T_{K,\edge,\vertex,\vertex'}^{K})_{\edge,\vertex,\vertex'}$ and $V_\vertex$ be the union of all $(T_{K,\edge,\vertex,\vertex'}^{\vertex})_{K,\edge,\vertex'}$. 
This leads to
\[
\forall v\in X_{\disc,0}\,:\,
\Pi_\disc v=\sum_{K\in\mesh} v_K\chi_{V_K}+\sum_{\vertex\in\mathcal V}
v_\vertex\chi_{V_\vertex}.
\]

\end{enumerate}

\begin{figure}
\begin{center}
\input{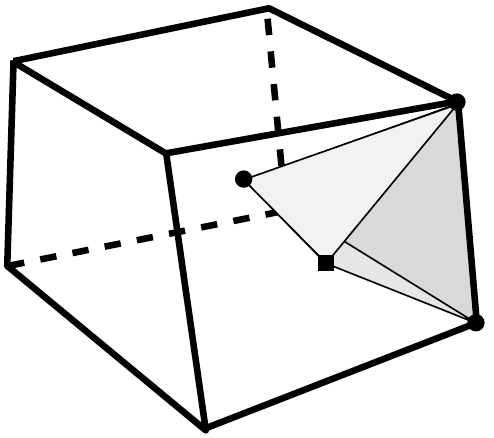_t}
\label{fig:vag}\caption{Definition of tetrahedron $T_{K,\edge,\vertex,\vertex'}$
in a mesh cell $K$.}
\end{center}
\end{figure}

The regularity of a sequence of VAG gradient discretisations
is defined as the regularity of the underlying tetrahedral meshes $\polyd^T$
in the sense of Definition \ref{def:regpolyd}. We can check that $\regbc(\obcdisc)$ remains bounded by a non-decreasing function of $\theta_{\polyd^T}$, and the proof of the property $(\mathcal{P})$ for VAG gradient discretisation
is thus a direct consequence of the results in Section \ref{sec:prop},
especially Theorem \ref{th:bcgd} (properties of the barycentric condensation)
and Theorem \ref{th:ml} (properties of mass-lumped gradient discretisations).

\begin{commentforus}
\begin{remark} An upper bound on $\theta_{\polyd^T}$ gives a control on
$\regbc(\disc)$. More precisely, by writing $a\lesssim b$
as a shorthand for $a\le Cb$ for some $C>0$ only depending
on an upper bound on $\theta_{\polyd^T}$, and $a\sim b$
as a shorthand for $a\lesssim b$ and $b\lesssim a$, we have:
\begin{enumerate}
\item\label{com:it1} \emph{The length of any edge in any tetrahedron $T$
is comparable to the diameter $h_T$ of the tetrahedron} (this one is classical,
but let's write it for sake of completeness): if $T$ is a tetrahedra
of $\mesh^T$, let $\tau$ be a face of $T$ and $\vertex$ be the opposite
vertex. Let $\x_T$ be the centre of the largest ball included in $T$;
this ball has diameter $\sim h_T$.
Let $(\vertex_i)_{i=1,\ldots,d}$ be the vertices of $\tau$. We
write $\x_T$ as a convex combination $\x_T=\lambda\vertex + \sum_{i=1}^d
\lambda_i\vertex_i$ and, for any $\y$ in the hyperplane spanned by $\tau$, since $(\y-\vertex_i)\bot\n_{T,\tau}$
and $\lambda+\sum_{i=1}^d\lambda_i=1$ we have
\[
(\x_T-\y)\cdot\n_{T,\tau}=\lambda(\vertex-\y)\cdot\n_{T,\tau}+\sum_{i=1}^d
\lambda_i(\vertex_i-\y)\cdot\n_{T,\tau}=
\lambda(\vertex-\y)\cdot\n_{T,\tau}.
\]
Hence $(\x_T-\y)\cdot\n_{T,\tau}\le \lambda|\vertex-\y|\le |\vertex-\y|$.
But $(\x_T-\y)\cdot\n_{T,\tau}={\rm dist}(\x_T,\tau)\ge \rho_T\sim h_T$,
and therefore 
\be\label{comp0}
\begin{array}{c}
\mbox{For all $\y$ in the hyperplane generated by a face $\tau$ of a tetraedron $T$,}\\
\mbox{if $\vertex$ is the vertex opposite to $\tau$ in $T$ then $|\vertex-\y|\sim h_T$}.
\end{array}
\ee
Taking $\y$ as the vertices of $\tau$ proves that
\be\label{comp1}
\mbox{The length of any edge in a tetrahedron $T$ is $\sim h_T$.}
\ee

\item \emph{If $\edge$ is a face and $h_\edge$ the maximal distance between two of its
vertices, then $h_\edge$ is similar to the diameter of any tetrahedron with its
base on the face ($\edge$ does not need to be planar)}:
we first notice that any two tetrahedra $T$ and $T'$ with base on $\edge$ share 
the common $[\x_K,\centeredge]$ and thus, by \eqref{comp1}, 
\be\label{comp2}
h_{T}\sim |\x_K-\centeredge|\sim h_{T'}.
\ee
We have $h_\edge=|\vertex_1-\vertex_2|$ for some vertices $\vertex_{i}$
of $\edge$. Let us take $T_1$ and $T_2$ tetrahedra
in $K$ with base on $\edge$ and having respectively $\vertex_1$ and $\vertex_2$
as vertices. Using \eqref{comp2} we have 
\be\label{comp33}
h_\edge=|\vertex_1-\vertex_2|\le |\vertex_1-\centeredge|+|\centeredge-\vertex_2|
\le h_{T_1}+h_{T_2}\sim h_T
\ee
for any tetrahedron $T$ with base on $\edge$.
Any edge of $\edge$ is also an edge of a tetrahedron with base on $\edge$;
Properties \eqref{comp1} and \eqref{comp33} therefore give
\be\label{comp4}
\mbox{The length of any edge of $\edge$ is $\sim h_\sigma$}.
\ee
Each tetrahedron $T$ with its base on $\edge$ share an edge with $\edge$.
Hence, \eqref{comp1} and \eqref{comp4} show that
\be\label{comp5}
\mbox{For any tetrahedron $T$ with its base on $\edge$,
$h_T\sim h_\sigma$}.
\ee
\item \emph{Upper bound on $\regbc(\disc)$}: for the VAG method, $U\in\partition$
is a tetrahedron $T$ with its base on $\edge$, and the only degree of freedom that is
eliminated in $I_U$ (from the $\P_1$ method) is the degree of freedom at $\centeredge$.
This elimination is done by using the vertices of $\edge$ and, by \eqref{comp5},
these vertices all lie within distance $h_\sigma\sim h_T={\rm diam}(U)$ of the points in $U$.

\end{enumerate}
Note that if we also control the number of faces in each cell of $\mesh$, then by working
neighbour to neighbour we can show that all tetrahedra have a size $h_T\sim h_K$.
\end{remark}
\end{commentforus}

\section{Conclusion}\label{sec:ccl}

We gave here a brief presentation of gradient schemes, a generic framework for the convergence
analysis of several numerical methods for various diffusion models. 
This framework is based
on the notion of gradient discretisations (a triplet of discrete space, reconstructed gradient
and reconstructed function) and on core properties they must satisfy to ensure the convergence
of the corresponding gradient schemes. We provided generic tools to prove that given numerical schemes
can be associated with gradient discretisations that satisfy the core properties: local linearly
exact gradients, barycentric condensation, mass lumping and control by polytopal toolboxes.
We then showed that several classical methods are gradient schemes: conforming $\P_k$ finite elements
(and mass-lumped $\P_1$ finite elements), non-conforming $\P_1$ finite elements
(with or without mass-lumping),
$\RTk$ mixed finite elements, multi-point flux approximation O-scheme,
discrete duality finite volumes, hybrid mimetic mixed methods (which contains hybrid
mimetic finite differences), nodal mimetic finite differences, and the vertex approximate
gradient scheme. All these schemes  have  been shown to be associated with
gradient discretisations that satisfy the required core properties. 

Ongoing works concern the adaptation of the gradient scheme framework to some more general operators. 
In the case of the incompressible Stokes equations \cite{dro-14-sto}, it has been possible to obtain convergence results which simultaneously hold for the Taylor-Hood scheme, the Crouzeix-Raviart scheme and the MAC scheme. 
Results in this direction have also been obtained on the elasticity problems \cite{dro2015graelas}. 
Some interesting questions remain open even in the case of the Laplace equation. 
For example, it is still not known whether discontinuous Galerkin schemes fall into the gradient scheme framework, {\it i.e.} if it is possible to construct a gradient that gathers the consistent part of the discontinuous Galerkin gradient and the jumps penalisation. 
As shown in the study of HMM methods\cite[Eq. (5.11)]{dro-12-gra}, such a construction would require some form of orthogonality property between these two components of discontinuous Galerkin schemes; further investigation is necessary.

\begin{acknowledgement}
\textbf{Acknowledgement}: the authors would like to thank Boris Andreianov for insightful discussions during the course of this research.
\end{acknowledgement}

\bibliographystyle{abbrv}
\bibliography{gabib}

\end{document}